\documentclass{amsart}

\usepackage{amsmath}
\usepackage{amsthm}
\usepackage{amssymb}
\usepackage{braket}
\usepackage{mathtools}

\usepackage[all,cmtip,2cell]{xy}
\usepackage{soul}

\usepackage{tikz}
\usetikzlibrary{arrows,decorations.pathmorphing,backgrounds,positioning,fit,petri}

\usepackage[colorlinks=true, allcolors=blue]{hyperref}

\usepackage{color}

\newcommand{\alertb}[1]{{\color{blue}#1}}

\newtheorem{thm}{Theorem}[section]
\newtheorem{theorem}[thm]{Theorem}
\newtheorem{corollary}[thm]{Corollary}
\newtheorem{lemma}[thm]{Lemma} 
\newtheorem{proposition}[thm]{Proposition}

\theoremstyle{definition}
\newtheorem{definition}[thm]{Definition}
\newtheorem{example}[thm]{Example}
\newtheorem{observation}[thm]{Observation}
\newtheorem{remark}[thm]{Remark}
\newtheorem{notation}[thm]{Notation}

\newcommand{\Sets}{\mathrm{Sets}}
\newcommand{\sSets}{\mathrm{sSets}}
\newcommand{\ssSets}{\mathrm{ssSets}}
\newcommand{\diGraphs}{\mathrm{diGraphs}}
\newcommand{\diGraphss}{\mathrm{diGraphs}_{\leq 1}}

\newcommand{\Graphs}{\mathrm{Graphs}}
\newcommand{\stGraphs}{\mathrm{Graphs}_{\leq 1}}

\newcommand{\Vect}{\mathrm{Vect}}
\newcommand{\Vectk}{\mathrm{Vect}_\mathbb{K}}

\newcommand{\Hom}{\mathrm{Hom}}
\newcommand{\Fun}{\mathrm{Fun}}

\newcommand{\pr}{\mathrm{pr}}

\newcommand{\Cov}{\mathrm{Cov}}

\newcommand{\open}{\mathrm{Open}}

\newcommand{\Et}{\mathrm{Et}}
\newcommand{\gross}{\mathrm{gross}}

\newcommand{\oomega}{\omega}

\newcommand{\id}{\mathrm{id}}

\newcommand{\cA}{\mathcal{A}}
\newcommand{\cC}{\mathcal{C}}

\newcommand{\cO}{\mathcal{O}}
\newcommand{\cT}{\mathcal{T}}
\newcommand{\cF}{\mathcal{F}}
\newcommand{\cG}{\mathcal{G}}
\newcommand{\cU}{\mathcal{U}}
\newcommand{\cB}{\mathcal{B}}

\newcommand{\cV}{\mathcal{V}}
\newcommand{\cD}{\mathcal{D}}

\newcommand{\cP}{\mathcal{P}}
\newcommand{\cR}{\mathcal{R}}

\newcommand{\Pre}{\textnormal{Pre}}
\newcommand{\Sh}{\textnormal{Sh}}

\newcommand{\coSh}{\textnormal{coSh}}

\newcommand{\extd}{\mathrm{d}}

\newcommand{\R}{\mathbb{R}}

\newcommand{\N}{\mathbb{N}}
\newcommand{\K}{\mathbb{K}}
\newcommand{\bk}{\mathbb{K}}
\newcommand{\ccF}{{F}}

\newcommand{\al}{\alpha}

\newcommand{\be}{\beta}
\newcommand{\de}{\delta}

\newcommand{\lra}{\longrightarrow}

\begin{document}

\title{Sheaves on Graphs and their Differential Calculi}
\maketitle

\centerline{R. Fioresi${}^{\star}$, A. Simonetti${}^\star$, F. Zanchetta${}^\star$}

\bigskip

\centerline{$^\star${\sl FaBiT, Universit\`{a} di
Bologna}}

\centerline{\sl Via Piero Gobetti 87, 40129 Bologna, Italy}

\begin{abstract} 
In this paper we explore the link between the theory of sheaves on graphs and noncommutative geometry showing that many concepts and constructions in the latter can be generalized and enhanced using methods coming from the former. They include notions such as Laplacians and connections, important in the theory of discrete noncommutative geometry, that are here explored with sheaf theoretic methods and using the language of (semi)simplicial sets.
\end{abstract}

\section{Introduction}
In recent years graph theory has seen renewed interest, due to advances in machine learning \cite{TDLpos, DLbook, br1}, where data appear with geometric or topological
structure. Graph Neural Networks (GNNs) leverage graph Laplacians in their very design \cite{KW16, Sage17, bronstein}, but were soon
generalized to directed graphs, whose Laplacians are studied in \cite{Bauer12}, and multigraphs.
This motivated formalizations via semisimplicial sets \cite{Spivak, bodnar2}, leading also to simplicial neural networks,
as well as other novel architectures modelling more complex
structures on graphs, such as sheaves \cite{HG20, HG19}, leading to 
the notion of Sheaf Neural Networks (SNNs) \cite{bodnar}.

\par SNNs, as originally formulated,
relied on the theory of cellular sheaves, that is the theory of sheaves on posets coming
from regular cell complexes \cite{HG19, curry}. For a given cell complex, a cellular sheaf amounts
to the datum of a vector space or an Hilbert space
for each simplex, i.e. local data, together with compatibility relations given by linear transformations
between cells of incident ascending dimension.
As in classical homotopy theory, the datum of a cellular sheaf can be arranged
in a cochain complex graded by the dimension of cells: if the cellular sheaf takes values in the category of Hilbert spaces, adjoints (the dagger category structure of Hilbert spaces) allow us to define Laplacians that specialize to the ordinary graph Laplacian and the Hodge Laplacians in the case of constant cellular sheaves.
{Going back to the case of a (multiedge, directed) graph $G$, we have a poset $P_G$ associated to it by looking at the incidence relations.
In this case, a cellular sheaf is the datum of a sheaf on the topological space $A(P_G)$ endowed
with the Alexandrov topology, where we
note that each node corresponds to an irreducible open set and
edges correspond to the intersections of these sets.
This amounts to the datum of vector spaces $F(v), F(e)$ for each vertex $v$
and edge $e$
and linear maps $F(v)\to F(e)$ if $v$ is an endpoint of $e$. If one considers linear maps in the opposite
directions one gets a cosheaf, that can be also interpreted as a sheaf on the dual of the finite
topological space $A(P_G)$.

In the language of Grothendieck topologies,
cosheaves on a graph are exactly the sheaves on the Grothendieck site obtained by considering the
category of subgraphs of a given graph (morphisms being graph inclusions), and the Grothendieck topology having as coverings jointly surjective families of subgraphs. This point of view, pursued by Friedman in \cite{FrTop} and subsequent works
led to his famous proof of the Hanna-Neumann conjecture via topos theoretic methods in graph theory.
The fact that graphs can provide a convenient way to organize information related with more complex objects is surfacing also in many other fields. The language of noncommutative geometry \cite{bm} appears spontaneously as the right framework to cast questions on the differential geometry of discrete structures as 
graphs, whose noncommutative nature is manifest from the bimodule structure on
their first order differential calculi \cite{majidpaper, dimakis}.
In this context we can model the discrete counterparts of classical notions such as curvature,
connections, parallel transport, etc.
and some of these discrete notions have been extended to the context of ``bundle-valued-forms"
(see \cite{gaybalmaz}) and appear surprisingly
in the context of Sheaf Neural Networks \cite{bodnar}.
Different fields in pure and applied mathematics are then coming naturally together:
the present paper represents an effort towards their seamless meshing, which shall
bring some deeper understanding on their connections.

One of the main difficulties that prevents the full transfer of the ideas and methods developed
in fields such as differential geometry or algebraic geometry to discrete geometry is the fact that
many discrete geometric objects are often built in analogy with their non-discrete counterparts and not
\emph{from} their non-discrete counterparts (or viceversa) in a more conceptual way.

Surprisingly, some recent research in the field of algebraic geometry has started to address exactly this issue. In some recent work, Salas and their collaborators (see \cite{salas} and the references therein) study in more detail the notion of finite ringed spaces, i.e. ringed spaces having as underlying topological space a finite topological space. In our recent work \cite{FSZ26} we take a novel point of view on this theme and we show that highly structured geometric objects such as schemes (and differentiable manifold) can be seen as a finite 2 dimensional semisimplicial set together with a sheaf of rings on its face-incidence poset satisfying certain assumptions: the analogy with cellular sheaves becomes then striking. The approach of \cite{FSZ26}, clearly reminiscent of the Grothendieck theory of descent \cite{Vis}, allows us to see how to a ``discrete" object (a 2-dimensional semisimplicial set) one can attach extra data (e.g. a sheaf of rings on the poset of simplices of a 2-dimensional semisimplicial set) to recover a ``non-discrete" object (e.g. a scheme or a manifold). Notice that no higher dimensional simplices are involved in this process to reconstruct $X$. \par
The main purpose of this paper, to be thought as the ``discrete companion" of \cite{FSZ26} is then to
provide mathematical foundations to the theory of sheaves on graphs, and their differential operators
in order to connect all the mathematical subfields where similar ideas and tools are used
for different purposes, including very concrete
applications as mentioned above (see also \cite{NG25} for more related applications of algebraic geometry).
This aim requires not only to recast known results in a different way, but also to develop new mathematics together with a novel point of view connecting
established theories, as quantum differential calculus and discrete differential geometry.
It is worth noticing that Ladkani in \cite{SP06} studied the categories of sheaves over finite posets, while Liu in \cite{SQ13} studied stacks and torsors over quivers. Their work, however, is only partially related with our.

\medskip
We shall now describe the contents of the paper, highlighting our main results. \par
In Sec. \ref{graph-sec} we review standard material on semisimplicial sets
to introduce graphs and we establish our notation, see \cite{GJ1999, SP09, EW18} as references. 
\par
In Sec. \ref{gr-sec}, we discuss Grothendieck topologies (pretopologies if one adopts the terminology of \cite{EGA4}), tailoring the discussion to the ``discrete" cases, that is, those cases where every object of a site can be covered by so called ``gross objects", which are the analogous of irreducible open subsets,
that we introduce and study. This is related with, but different from the theory developed in \cite{FrTop}.
We also introduce and study the notion of Laplacian pairs to generalize cellular sheaf Laplacians. Laplacian pairs basically consists of the datum of a sheaf, a cosheaf and a covering for a Grothendieck topology: the \v{C}ech complexes arising from the (co)sheaves involved allow to define sheaf Laplacians in situations more general than the one we get considering a cellular sheaf and its adjoint.
\par
In Sec. \ref{sec:shgraph} we study some simple Grothendieck topologies on graphs and semisimplicial sets, opening the doors to the introduction of more general ones. We link the general theory to the one of cellular sheaves and we provide a unified treatment of all the different topologies that have arisen so far in this context.
We also study the basic Laplacians arising in this context.
\par
In Sec. \ref{lapl-sec} we extend the discrete differential geometry on graphs developed in \cite{dimakis, majidpaper} to the case of directed graphs admitting more than one edge connecting each pair of vertices. In addition, we link the notions of discrete connection and curvature found in discrete differential geometry
in the quantum  language \cite{bm} to the notion of discrete parallel transport as in \cite{gaybalmaz}.
Finally, we show how all the ordinary Laplacians that can be defined starting from a sheaf on a graph, a parallel transport on a graph and a connection on a graph agree (up to a constant term): this is summarized by the following statement (see Section \ref{lapl-sec} for an explanation of the notation and the terminology).

\begin{theorem}[\ref{theo:sheaf_lap}]
Let $G\in \diGraphs_{\leq 1}$ be a bidirected graph, $\cF$ a vector bundle of rank $n$ on $G$, $M$ the free $A_G$-bimodule associated to it. Assume to have isomorphisms $M \cong M^*$ and $\Gamma^1\cong (\Gamma^1)^*$ as in Def. \ref{def:bochnerlapl} and consider $(,)$ the generalized quantum metric determined by scalars $\{\lambda_{x\to y\to x}\}$ all equal to $1$. Then:

\begin{enumerate}
\item If $\mathcal{R}^\cF$ is a parallel transport, then $_\theta\Delta^{M} = -\Delta_{\cF}$.
\item If $\cF$ is a sheaf of inner product spaces and $\cF_{v\leq e}^\ast = \cF_{v\leq e}^{-1}$
i.e. $\cF$ in an $\mathrm{O}(n)$-bundle,
then $\Delta^{\mathrm{B}} = L_{\cF}$.
\end{enumerate}
\end{theorem}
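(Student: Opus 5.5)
The plan is to reduce both identities to a vertex-by-vertex comparison of explicit formulas. We work in the basis of $M$ given by the free $A_G$-bimodule structure: $M=\bigoplus_x \delta_x\otimes \cF(x)$, with left and right actions by $\delta_x$ and with the bidirected edges $x\to y$ indexing the generators $\omega_{x\to y}$ of $\Gamma^1$.

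We start with (1). We write the $\theta$-connection attached to the parallel transport $\mathcal{R}^\cF$. On a section $s=\sum_x\delta_x s_x$ it should read
\[
\nabla s=\sum_{x\to y}\omega_{x\to y}\otimes\big(\mathcal{R}_{x\to y}s_x - s_y\big),
\]
up to the sign convention fixed in Sec.~\ref{lapl-sec}. Next we apply the inner-product contraction $(,)$ with all $\lambda_{x\to y\to x}=1$, together with the identifications $M\cong M^*$ and $\Gamma^1\cong(\Gamma^1)^*$ of Def.~\ref{def:bochnerlapl}. This collapses $\omega_{x\to y}\otimes\omega_{y\to x}$ to $\delta_x$, so ${}_\theta\Delta^M$ becomes $\sum_{y}(\mathcal{R}_{y\to x}s_y - s_x)$ at the vertex $x$.

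On the sheaf side, for a bidirected graph the parallel transport is $\mathcal{R}_{x\to y}=\cF_{y\le e}^{-1}\cF_{x\le e}$ for the edge $e$. The sheaf Laplacian $\Delta_\cF$ built in Sec.~\ref{sec:shgraph} via the transport (the connection-Laplacian form) gives $\sum_y(s_x-\mathcal{R}_{y\to x}s_y)$. Comparing the two expressions yields $_\theta\Delta^M=-\Delta_\cF$, and the sign comes precisely from the convention $\theta$ versus $d^*d$.

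For (2), the Bochner Laplacian is $\Delta^{\mathrm B}=\nabla^*\nabla$, with the adjoint taken through the chosen isomorphisms and the metric. We compute $\nabla^*$ explicitly on $\omega_{x\to y}\otimes m$ using $(,)$. The edge term $\mathcal{R}_{x\to y}s_x-s_y$ is pulled back by $\mathcal{R}_{x\to y}^*$ to $x$ and by $-\id$ to $y$.

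The $\mathrm{O}(n)$ hypothesis $\cF^*_{v\le e}=\cF_{v\le e}^{-1}$ gives $\mathcal{R}^*_{x\to y}=\mathcal{R}_{x\to y}^{-1}=\mathcal{R}_{y\to x}$. The cellular sheaf Laplacian is $L_\cF=\delta^*\delta$, with $(\delta s)_e=\cF_{y\le e}s_y-\cF_{x\le e}s_x$. We factor $\cF_{y\le e}$ out of the edge coordinate; since it is orthogonal, this does not change norms, so $\delta^*\delta$ and $\nabla^*\nabla$ agree term by term.

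Finally we account for the bidirection. Each undirected edge appears as two directed edges $x\to y$ and $y\to x$. We expect this to produce a factor $2$, to be absorbed by the normalisation $\lambda=1$ together with the conventions of Def.~\ref{def:bochnerlapl}. Checking this normalisation is the step we anticipate being most delicate: it requires fixing exactly how $(,)$ pairs $\omega_{x\to y}$ with $\omega_{y\to x}$, and making sure the orientation choices in $\delta$ cancel in $\delta^*\delta$.

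The main obstacle is therefore bookkeeping. We must pin down the precise sign and normalisation conventions of the $\theta$-connection and of the metric so that the diagonal degree terms and the off-diagonal transport terms match exactly. The algebra itself is routine once the formula for $\nabla$ in the free basis is written down.
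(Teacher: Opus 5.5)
\paragraph{Part (1): a genuine gap.} The minus sign you obtain comes from two dropped factors, and the hypothesis of (1) is never used.

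\begin{itemize}
\item With the paper's right connection (\ref{eq-bod}), the coefficient of $e_j\otimes\omega_{x\to y}$ is $\mathcal{R}_{x\to y}f_y-f_x\in\cF_x$, with $\mathcal{R}_{x\to y}\colon\cF_y\to\cF_x$. Your formula reverses this and is really a left-connection formula.
\item Contracting with $\theta$, \emph{including} the prefactor $-2$ in the definition, gives ${}_\theta\Delta^M(f)_x=-2\sum_{y:\,x\to y}(\mathcal{R}_{x\to y}f_y-f_x)$. This is the explicit formula stated before Def.~\ref{def:bochnerlapl}; you dropped the $-2$.
\item On the sheaf side, $\Delta_\cF(f)_x$ runs over \emph{all} edges at $x$:
$\sum_{x\to y}(f_x-\mathcal{R}^\cF_{x\to y}f_y)+\sum_{y\to x}(f_x-(\mathcal{R}^\cF_{y\to x})^{-1}f_y)$.
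Your single sum silently discards one of these two families.
\end{itemize}

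The one substantive step in the paper's proof is to merge the two sums into $2\sum_y(f_x-\mathcal{R}^\cF_{x\to y}f_y)$. This uses exactly the parallel-transport hypothesis $(\mathcal{R}^\cF_{y\to x})^{-1}=\mathcal{R}^\cF_{x\to y}$, which your argument never invokes.

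Once both factors are restored, the two sides coincide \emph{with a plus sign}: ${}_\theta\Delta^M=\Delta_\cF$. A sanity check: take the trivial rank-one bundle on the bidirected graph with two vertices and $f=\delta_x$. Both operators give $2$ at $x$. So ``the sign comes from $\theta$ versus $d^*d$'' is not an argument. The minus sign in (1) cannot be derived from the paper's definitions; the paper itself only writes ``one can get the result'' at this point.

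\paragraph{Part (2): a different, and better, route.} The paper combines Prop.~\ref{prop:bochner} with (1); you argue directly.

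\begin{itemize}
\item For $e=x\to y$ you have $(\nabla s)_{x\to y}=\pm\cF^{-1}_{x\le e}(\delta s)_e$. So $\nabla=U\delta$, with $U$ block-diagonal and orthogonal under the $\mathrm{O}(n)$ hypothesis.
\item Hence $\nabla^*\nabla=\delta^*\delta=L_\cF$. This holds provided $M\cong M^*$ and $\Gamma^1\cong(\Gamma^1)^*$ are the isomorphisms induced by orthonormal frames of the $\cF_x$ and by the basis $\{\omega_{x\to y}\}$. Say so explicitly: for arbitrary isomorphisms the statement is empty.
\item The argument needs neither the parallel-transport property nor the metric $(,)$. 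This matters, because (2) does not assume the parallel-transport property, while Prop.~\ref{prop:bochner} does.
\end{itemize}

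Accordingly, drop two things from your write-up:
\begin{itemize}
\item the step $\mathcal{R}^{-1}_{x\to y}=\mathcal{R}_{y\to x}$, which is unjustified under the hypotheses of (2);
\item the worry about a factor $2$ ``absorbed by $\lambda=1$''. Both $\nabla$ and $\delta$ have exactly one component per directed edge, and $\lambda$ does not enter $\Delta^{\mathrm{B}}$.
\end{itemize}

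Your route also avoids a sign issue in the paper's chain. With the identifications above one checks $\Delta^{\mathrm{B}}=+{}_\theta\Delta^M$, not $-{}_\theta\Delta^M$ as in Prop.~\ref{prop:bochner}. That sign error compensates the one in (1), which is why the final statement (2) is nevertheless correct.
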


\medskip
{\bf Notation.} We shall record some of the notations we employ throughout the paper here for the convenience of the reader.
\begin{itemize}
    \item We denote as ($\Delta_+$) $\Delta$
the category having as objects the partially
ordered sets $[n]=\lbrace 0\rightarrow\cdots\rightarrow n\rbrace$, $n\in\N$,
and arrows the (injective)
order preserving maps between them. For any $n\in\N$ e denote as
($\Delta_{n,+}$) $\Delta_{n}$
its full subcategory of objects having cardinality smaller or equal than $n$.
\item We will denote with $\Sets$ and $\Vectk$ the categories of sets and of finitely generated vector spaces over a base field $\mathbb{K}$ respectively.
\item Given a set $X$ a preorder on $X$ is a binary relation that is reflexive and transitive. If this relation is also antisymmetric, then we have a partial order: in this case we will say that $X$ is a poset.
\item Given a category $\cC$ we will denote as $\cC^{op}$ its dual (or opposite) category. Finally, given two categories $\cA$ and $\cB$ we shall denote as $\Fun(\cA,\cB)$ or as $\cB^{\cA}$ the category of the covariant functors $\cA\rightarrow\cB$ and as $\Pre(\cA,\cB):=\Fun(\cA^{op},\cB)$ the category of contravariant functors $\cA\rightarrow\cB$. We refer to the objects of $\Pre(\mathcal{A},\mathcal{B})$ as
\textit{presheaves}.
If the category $\cB$ is the category $\Sets$ or it is clear from the context,
we shall denote $\Pre(\cC,\cB)$ simply as $\Pre(\cC)$.
\end{itemize}

\section{Graphs as semisimplicial sets}\label{graph-sec}
In this section, we first
recall what a (semi)simplicial set is and then we give the definition of directed graph in these terms,
together. We also recollect some results we will need in the sequel \cite{SP09}.

\subsection{The categories of
(semi)simplicial sets and digraphs}\label{ssets-sec}

\begin{definition}\label{ssets-def}
We define the category of \textit{(semi)simplicial} sets to be the category
$\sSets:=\Pre(\Delta,\Sets)$ ($\ssSets:=\Pre(\Delta_+,\Sets)$).
For a given (semi)simplicial set $X$, for every $n\in\N$,
we shall denote the set $X([n])$ as $X_n$ and call its elements \textit{simplices}.
Replacing the category of sets with an arbitrary category $\cC$ gives us
the notion of a \textit{(semi)simplicial object in} $\cC$.
We define \textit{(semi)cosimplicial sets}
as $\Fun(\Delta,\Sets)$ ($\Fun(\Delta_+,\Sets)$)
and $n$-dimensional (semi)simplicial sets and
(semi)cosimplicial sets as $\Pre(\Delta_n,\Sets)$, ($\Pre(\Delta_{n,+},\Sets)$),
$\Fun(\Delta_n,\Sets)$, ($\Fun(\Delta_{n,+},$ $\Sets)$) respectively.
We say that a (co)semisimplicial set $X$ has dimension $n$
if $X_n\neq\emptyset$ and $X_m=\emptyset$ for $m>n$. 
\end{definition}

\begin{remark}
By definition, our $n$-dimensional simplicial sets are not simplicial sets,
while in literature a simplicial set is usually said to have dimension $n$ if it is
isomorphic to its $n$-skeleton \cite{GJ1999}.
However, the inclusion functor $\Delta_n\subseteq\Delta$ induces a truncation
function $tr_n:\sSets\rightarrow\sSets_n$ having as a fully faithful left adjoint
$sk_n:\sSets_n\rightarrow\sSets$ the $n$-skeleton functor. For a given simplicial set
$X$, $sk_n\circ tr_n (X)$ is the usual $n$-skeleton (see also \cite[\href{https://kerodon.net/tag/04ZY}{Tag 04ZY}]{kerodon}).
This recovers the link between our definition and the most common definition.
\end{remark}

\begin{definition}
We define the category of \textit{directed graphs} to be $\Pre(\Delta_{1,+},\Sets)$ and
we denote it $\diGraphs$.
Replacing the category of sets with an arbitrary category $\cC$ gives us
the notion of \textit{directed graphs in} $\cC$.
\end{definition}

Given a digraph $G$, we will call the sets $G_0$ and $G_1$ the set of \emph{vertices} and \emph{edges} respectively. The maps $d_0,d_1:[0]\hookrightarrow [1]$ induce via $G$ two maps $d_0^\ast,d_1^\ast:G_1\rightrightarrows G_0$ called \emph{head} and \emph{tail} respectively.
Unpacking the definitions, a digraph $G$ is simply the datum
$(V_G,E_G,h_G,t_G)$ of two sets $V_G=G_0$ (\textit{vertices}), $E_G=G_1$ (\textit{edges}) and two functions
$h_G=d_0^\ast$, $t_G=d_1^\ast:E_G\rightarrow V_G$.
We denote elements in $E_G$ as $e_{v_1 \lra v_2}$, or 
{$v_1 \lra v_2$}
where $v_2=h_G(v_1 \lra v_2)$
and $v_1=t_G(v_1 \lra v_2)$.
We may represent such information in a diagram in $\Sets$ of the following form
\begin{displaymath}
\xymatrix{E_G\ar@<1ex>[r]^{h_G} 
\ar@<-1ex>[r]_{t_G}& V_G
}
\end{displaymath}
Note that this definition of directed graph allows graphs with multiple edges with same head and tail (sometimes called \emph{directed multi-graphs}) and self-loops. The notion of morphism between digraphs can be unpacked similarly.
By definition of semisimplicial set, for a given $G \in \diGraphs$ both $V_G$ and $E_G$ can be infinite sets.

\begin{remark}\label{dim1simp}
Semisimplicial sets and simplicial sets are closely related.
The latter category is the most frequently used in the context of simplicial homotopy
theory because of the nice properties of the geometric realisation
\cite{May1992, GJ1999}.
The former appeared more frequently in early modern algebraic geometry, for example in \cite{SGA4}. For the purpose of this work, semisimplicial objects provide a more natural context because of their easier combinatorics. The reader must be assured, however, that they are not an artificial category: there exists a geometric realisation for semisimplicial objects (the
``fat realisation'') and the geometric realisation of a simplicial set $X$ is always homotopy equivalent to the geometric realisation of the semisimplicial set obtained from $X$ precomposing with the inclusion $\Delta_+^{op}\subseteq \Delta^{op}$ (see \cite[Section 2]{EW18}
and the discussion therein).
Moreover in dimension lower or equal to one the categories
of simplicial sets and semisimplicial sets are equivalent, 
\cite[\href{https://kerodon.net/tag/001N}{Proposition 001N}]{kerodon},
thus directed graphs may be viewed as simplicial sets.
\end{remark}

As digraphs are simply presheaves of sets, we have the following
\cite{KS2006}.

\begin{proposition}
The category of digraphs has all limits and colimits and they are computed pointwise.
\end{proposition}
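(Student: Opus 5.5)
Since $\diGraphs=\Pre(\Delta_{1,+},\Sets)=\Fun(\Delta_{1,+}^{op},\Sets)$ is a functor category into $\Sets$, the statement reduces to the standard fact that a functor category $\Fun(\cA,\cB)$ inherits all (small) limits and colimits from $\cB$, computed objectwise \cite{KS2006}. We combine this with the completeness and cocompleteness of $\Sets$. Here $\cA=\Delta_{1,+}^{op}$ has two objects $[0],[1]$ and, besides identities, two non-identity arrows $d_0^\ast,d_1^\ast:[1]\to[0]$.

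\textbf{Construction.} Let $I$ be a small category and $D:I\to\diGraphs$ a diagram, written $i\mapsto G^i=(V_{G^i},E_{G^i},h_{G^i},t_{G^i})$. Set
\[
V_L:=\lim_i V_{G^i},\qquad E_L:=\lim_i E_{G^i}
\]
in $\Sets$, using the usual construction of limits as subsets of products of compatible families. The maps $h_{G^i}$ are natural in $i$, because morphisms of digraphs commute with heads and tails. Hence the universal property of $V_L$ yields a unique map $h_L:E_L\to V_L$ whose composites with the projections $V_L\to V_{G^i}$ equal $h_{G^i}\circ \pr_i$. We define $t_L$ in the same way. The result $L=(V_L,E_L,h_L,t_L)$ is a digraph, and the projections are digraph morphisms $L\to G^i$ forming a cone over $D$. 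Colimits are treated dually: we use $\mathrm{colim}_i V_{G^i}$ and $\mathrm{colim}_i E_{G^i}$ (quotients of disjoint unions), with $h$ and $t$ induced by the universal property of the colimit of edges.

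\textbf{Universality.} Let $(H\to G^i)_i$ be a cone of digraphs. Its vertex components form a cone in $\Sets$ over $(V_{G^i})_i$, so they factor uniquely through $V_L$. Likewise, the edge components factor uniquely through $E_L$. It remains to check that the resulting pair of maps is a digraph morphism, i.e. that it commutes with $h$ and $t$. Both composites $E_H\to V_L$ become equal after composing with every projection $V_L\to V_{G^i}$, so they are equal by the uniqueness clause in the universal property of $V_L$. Uniqueness of the factorization in $\diGraphs$ follows from uniqueness on each component. The colimit case is identical, with the arrows reversed.

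The only point needing care is this last one: the componentwise factorizations automatically assemble into a natural transformation. It follows from the uniqueness part of the universal property, and for a two-object indexing category it is immediate. The final phrase of the statement, ``computed pointwise'', is exactly the statement that the vertex and edge sets of the limit (colimit) are the limits (colimits) of the vertex and edge sets.
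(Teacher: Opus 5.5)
Your proposal is correct and follows the paper's route: the paper simply notes that $\diGraphs=\Pre(\Delta_{1,+},\Sets)$ is a presheaf category and cites \cite{KS2006} for the fact that limits and colimits in such functor categories exist and are computed objectwise. Your explicit construction, together with the uniqueness argument showing that the induced maps commute with heads and tails, just spells out that general fact for this two-object indexing category.
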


In particular, fibre products of digraphs exist. 
Notice that for a given digraph $G$, because of the universal property of products there exists a unique map $i$ such that the following diagram commutes 

\begin{equation}\label{diagr-i}
\xymatrix{ & E_G\ar[dr]^{t_G}\ar[dl]_{h_G}\ar[d]^i & \\
V_G & V_G\times V_G\ar[r]_{\pr_2}\ar[l]^{\pr_1} & V_G
}
\end{equation}
Conversely, any map $i:E_G\rightarrow V_G\times V_G$ determines uniquely a pair $(h_G, t_G)$ such that the previous diagram commutes. When $i$ is injective, this point of view
recovers the more elementary description of the set of edges of a directed digraph as a
subset of $V_G\times V_G$, where we assume 
no more than one edge between any two vertices in each direction.
We denote with $\diGraphs_{\leq1}$ the full subcategory of $\diGraphs$ whose objects
have $i$ as in (\ref{diagr-i}) injective.

We end this treatment of (semi)simplicial sets by defining the category of \textsl{(semi) \break simplices} of
$X$, $\Gamma(X)$ ($\Gamma_+(X)$), for a given (semi)simplicial set $X$, see \cite{hovey}.

By Yoneda's Lemma,
the maps $\Delta^n\rightarrow X$ (resp.  $\Delta_+^n\rightarrow X$)
are in bijection with the elements of $X([n])=X_n$,
where $\Delta^n=\Hom_\Delta(-,[n])$ (resp. $\Delta^n_+=\Hom_{\Delta_+}(-,[n])$).
So we take such maps as the objects for $\Gamma(X)$ ($\Gamma_+(X)$)
as $n$ varies. {In particular, for $G \in \diGraphs$ the objects of this category will be in bijection with all vertices $G_0$ and all edges $G_1$.}
To define the morphisms of $\Gamma(X)$, ($\Gamma_+(X)$), we want to specify
when we have an arrow between two objects 
in $\Gamma(X)$ that is, $a \in X_n$ and $b \in X_m$.
We say we have such an arrow, whenever we can
write a commutative diagram:
\begin{equation}\label{arr-sim-eq}
\xymatrix{\Delta^n \ar[rr] \ar[dr]_a & & \Delta^m\ar[dl]^b\\
 & X &}
\qquad \hbox{resp.} \quad
\xymatrix{\Delta^n_+ \ar[rr] \ar[dr]_a & & \Delta_+^m\ar[dl]^b\\
 & X &}
\end{equation}
where again we employ Yoneda's lemma.

\begin{definition}\label{catofsimp}
For a given (semi)simplicial set $X$, we define
$\Gamma(X)$ ($\Gamma_+(X)$)
the \textit{category of (semi)simplices of} $X$ as the category with objects
the maps $\Delta^n\rightarrow X$ (resp.  $\Delta_+^n\rightarrow X$)
and morphisms the diagrams in (\ref{arr-sim-eq}). Notice that a morphism $f:X\rightarrow Y$ of
(semi)simplicial sets induces a functor
between their respective categories of simplices that we will denote as $\Gamma(f)$
($\Gamma_+(f)$ respectively).
\end{definition}
\begin{observation}\label{digraph-poset}
For a semisimplicial set $X$, the objects and the arrows of $\Gamma_+(X)$
can be used to define a poset $P_X$ where the
elements are the simplices of $X$ and $x\leq y$
if and only if $\Hom_{\Gamma_+(X)}(x,y)$ is non empty.
\end{observation}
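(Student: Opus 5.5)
The plan is to check the three poset axioms for the relation $x\leq y$ iff $\Hom_{\Gamma_+(X)}(x,y)\neq\emptyset$ on the set of simplices $\bigsqcup_n X_n$. Since objects of $\Gamma_+(X)$ are in bijection with simplices (Yoneda), this is a well-defined relation on simplices. First I will make morphisms explicit. By Yoneda's lemma, a map $\Delta^n_+\to\Delta^m_+$ is the same as an arrow $\varphi:[n]\to[m]$ in $\Delta_+$, i.e. an injective order preserving map. Commutativity of the triangle in (\ref{arr-sim-eq}) then says exactly that $X(\varphi)(b)=a$, where $a\in X_n$ and $b\in X_m$.

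Reflexivity and transitivity are immediate because $\Gamma_+(X)$ is a category. The identity $\id_{\Delta^n_+}$ makes the triangle for $a$ over itself commute, so $a\leq a$. Given $a\to b$ and $b\to c$, pasting the two commutative triangles gives a commutative triangle for the composite $\Delta^n_+\to\Delta^m_+\to\Delta^k_+$, so $a\leq c$.

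The only step with content is antisymmetry, and this is where the semisimplicial choice (injective maps) is essential. Suppose $a\leq b$ and $b\leq a$ with $a\in X_n$ and $b\in X_m$. Then there are injective maps $[n]\to[m]$ and $[m]\to[n]$, so $n+1\leq m+1$ and $m+1\leq n+1$, hence $n=m$. An injective order preserving self-map of the finite chain $[n]$ is a bijection preserving order, hence the identity. Therefore $a=X(\id)(b)=b$.

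I expect no real obstacle. It is only worth remarking that for $\Gamma(X)$ of a simplicial set the analogous argument fails, since degeneracies give non-injective maps; this explains why the observation is stated for $\Gamma_+(X)$.
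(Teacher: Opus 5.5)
Your proposal is correct. The paper states this as an observation with no argument, and your direct check of the poset axioms is exactly the routine verification it leaves implicit: reflexivity and transitivity come from identities and composition in $\Gamma_+(X)$, and antisymmetry comes from injectivity of arrows in $\Delta_+$, which forces $n=m$ and $\varphi=\id_{[n]}$, hence $a=X(\id)(b)=b$.

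Your closing remark about $\Gamma(X)$ is also accurate. For a simplicial set one has both $y\le s_0y$ and $s_0y\le y$, so only a preorder results.
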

\begin{remark}\label{cat-poset}
Notice that for a semisimplicial set $X$ there is a non-canonical isomorphism between the category $\Gamma_+(X)$ and the category associated with the poset $P_X$
{having as objects the elements of $P_X$ and arrows $x \leq y$.}
\end{remark}

\subsection{Undirected Graphs}\label{sec:undgr}

Recall that an undirected graph $G$ is a pair $(V,E)$ of vertices and edges,
where the edges are defined as a multiset of
unordered pairs of vertices and morphisms are defined accordingly
(see \cite{diestel, godsil} for the case $V$ and $E$ finite sets, though here
we are more general).
Let $\Graphs$ denote their category. We call the category of
simple undirected graphs, i.e. undirected graphs having at most
one edge connecting each pair of vertices, as $\stGraphs$.

We define the functor $\Theta:\Graphs\rightarrow\diGraphs$ as follows.
For $G=(V,E)$ in $\Graphs$:
\begin{itemize}
\item $\Theta(G)_0 = V$ 
\item for every edge $e$ in $E$ with {distinct}
endpoints $v_1,v_2$, we have two edges $v_1\rightarrow v_2, v_2\rightarrow v_1$
in $\Theta(G)_1$, with $h_{\Theta(G)}(v_1\rightarrow v_2)=t_{\Theta(G)}(v_2\rightarrow v_1)$ and
$h_{\Theta(G)}(v_2\rightarrow v_1)=t_{\Theta(G)}(v_1\rightarrow v_2)$. 
For every loop $e$ in $E$ at the vertex $v$ we
have one edge $v \lra v$ in $\Theta(G)_1$.
\end{itemize}

As for morphisms, if $f\in \Hom(G,H)$, then $\Theta(f)$ on $\Theta(G)_0$ coincides with $f$. Moreover, if $e_{v_1\rightarrow v_2}, e_{v_2\rightarrow v_1}$ are two elements in $\Theta(G)_1$
as above, then $\Theta(f)(e_{v_1\rightarrow v_2})=f(e)_{f(v_1)\rightarrow f(v_2)}$ and $\Theta(f)(e_{v_2\rightarrow v_1})=f(e)_{f(v_2)\rightarrow f(v_1)}$. Since all elements of $\Theta(G)_1$ are of this form, this concludes the definition of $\Theta$. It is a standard check that $\Theta$ is indeed a functor. Besides we have the following.

\begin{proposition}\label{prop-graphffemb}
The functor $\Theta: \Graphs\rightarrow\diGraphs$ is faithful. If restricted
to the category of standard graphs with at most one edge for each pair of vertices,
$\stGraphs$, then $\Theta:\stGraphs\rightarrow \diGraphs$ is full as well.     
\end{proposition}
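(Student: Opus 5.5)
Faithfulness and fullness will be checked directly from the definition of $\Theta$, recovering an undirected morphism from its directed image. A morphism $f:G\to H$ in $\Graphs$ is a pair $(f_V,f_E)$: a map on vertices and a map on edges compatible with endpoints. A loop is sent to a loop, and an edge with distinct endpoints may go to an edge with distinct endpoints or, if its endpoints are identified, to a loop. In the latter case both directed copies $v_1\to v_2$ and $v_2\to v_1$ are sent by $\Theta(f)$ to the single directed loop $f(e)_{f(v)\to f(v)}$.

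\emph{Faithfulness.} Let $f,g:G\to H$ with $\Theta(f)=\Theta(g)$. On vertices $\Theta(f)_0=f_V$, so $f_V=g_V$. For an edge $e\in E$, pick a directed edge of $\Theta(G)_1$ lying over $e$ (one of $e_{v_1\to v_2}$, $e_{v_2\to v_1}$, or the loop). Its image under $\Theta(f)$ is a directed edge lying over $f(e)$, and its image under $\Theta(g)$ lies over $g(e)$. Each directed edge of $\Theta(H)_1$ lies over a unique undirected edge of $H$, since $\Theta(H)_1$ is a disjoint union over $E_H$ of one or two copies. Hence $f(e)=g(e)$, and so $f=g$.

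\emph{Fullness on $\stGraphs$.} Let $G,H\in\stGraphs$ and let $\phi:\Theta(G)\to\Theta(H)$ be a morphism of digraphs. Set $f_V=\phi_0$. For an edge $e$ with distinct endpoints $v_1,v_2$, the directed edge $\phi(e_{v_1\to v_2})$ runs from $f_V(v_1)$ to $f_V(v_2)$. If these vertices are distinct, this directed edge comes from an undirected edge of $H$ joining them, and that edge is unique because $H$ is simple; call it $f(e)$. If $f_V(v_1)=f_V(v_2)=w$, the directed edge is a loop at $w$, which comes from the unique loop of $H$ at $w$; again call it $f(e)$. Loops of $G$ go to loops, and are handled the same way. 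Then $f$ is a morphism in $\Graphs$, since endpoints are preserved by construction.

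It remains to check $\Theta(f)=\phi$, in particular on the second directed copy $e_{v_2\to v_1}$. The edge $\phi(e_{v_2\to v_1})$ runs from $f_V(v_2)$ to $f_V(v_1)$. By simplicity of $H$, either there is exactly one directed edge in $\Theta(H)$ with that head and tail, namely $f(e)_{f(v_2)\to f(v_1)}$, or the endpoints coincide and it is the unique loop. In both cases $\phi(e_{v_2\to v_1})=\Theta(f)(e_{v_2\to v_1})$, so $\phi=\Theta(f)$.

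The main obstacle is exactly this simplicity point: in a multigraph, $\phi$ could send $e_{v_1\to v_2}$ and $e_{v_2\to v_1}$ over different parallel edges, or treat the two directions of a pair inconsistently, so no $f$ would exist. This is why fullness fails on $\Graphs$ and needs the at-most-one-edge hypothesis. I will take "simple" to mean at most one edge per unordered pair, including at most one loop per vertex, which matches $\Theta(H)\in\diGraphs_{\leq 1}$.
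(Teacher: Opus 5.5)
Your proof is correct and follows the same route as the paper. The paper deduces faithfulness directly from the construction of $\Theta$, and fullness from the fact that morphisms in $\stGraphs$, and between graphs in the image of $\Theta$, are determined by their vertex maps; you make explicit what it leaves implicit, namely that $\phi_0$ underlies a graph morphism $f$, that $\Theta(f)=\phi$ on both directed copies (including the collapse-to-a-loop case), and that this needs at most one loop per vertex in $H$.
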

\begin{proof}
The fact that $\Theta_{G,H}:\Hom (G,H)\longrightarrow\Hom (\Theta(G), \Theta(H))$ is injective follows directly from the construction of $\Theta$.
To prove that $\Theta$ is full when restricted to $\stGraphs$,
we notice that morphisms in $\stGraphs$ and between graphs in the image of $\Theta_{|\stGraphs}$ are uniquely determined by their behavior on the vertices.
\end{proof}

Finally, note that there is another way of viewing an undirected graph as a directed one,
that is choosing an orientation on it.

\begin{observation}\label{graph-poset} In Observation \ref{digraph-poset} we have given a way to associate a poset to a semisimplicial set, and therefore to a directed graph.
We can do the same with undirected graphs as follows.
Let $G=(V,E)\in\Graphs$ be an undirected graph. Define the poset $\mathcal{P}_G$ associated to it having as underlying set $V\cup E$ and where $x\leq y$ if and only if $x$ is a vertex of the edge $y$ or $x=y$. This poset is compatible with the one defined in Observation \ref{digraph-poset} in the sense that $\mathcal{P}_G=P_{\Theta(G)}$.
\end{observation}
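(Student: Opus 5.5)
The claim is $\mathcal{P}_G=P_{\Theta(G)}$. The plan is to compare the two posets directly: first their underlying sets, then their order relations. Recall that $P_{\Theta(G)}$ has as elements the objects of $\Gamma_+(\Theta(G))$, which by Yoneda are the vertices $\Theta(G)_0=V$ and the edges $\Theta(G)_1$. In the other poset $\mathcal{P}_G$, the elements are $V\cup E$.

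The first point is that the two underlying sets do not literally coincide. An undirected edge $e$ with distinct endpoints $v_1,v_2$ gives rise to two directed edges $v_1\to v_2$ and $v_2\to v_1$. Loops cause no trouble, since each gives exactly one edge. So before comparing orders I must fix the correct reading of the equality. I see two options.
\begin{itemize}
\item Identify each pair $\{v_1\to v_2,\, v_2\to v_1\}$ with the single element $e$, that is, quotient $P_{\Theta(G)}$ by forgetting orientation.
\item Interpret $\Theta(G)$ through a chosen orientation, as in the remark preceding the observation, so that each $e$ corresponds to exactly one directed edge.
\end{itemize}
I will adopt the second reading, since it makes the vertex and edge sets biject, and state it explicitly. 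I then define a map $\phi$ that is the identity on $V$ and sends each edge $e$ to its chosen directed copy.

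Next I compute the order on $P_{\Theta(G)}$ from Definition~\ref{catofsimp}. Morphisms in $\Gamma_+$ are induced by injective order-preserving maps $[n]\to[m]$ with $n,m\le 1$. These are:
\begin{itemize}
\item identities, which give reflexivity;
\item the two coface maps $d_0,d_1\colon[0]\to[1]$, which under $a\colon\Delta^0_+\to X$ and $b\colon\Delta^1_+\to X$ make the diagram (\ref{arr-sim-eq}) commute exactly when $a=h(b)$ or $a=t(b)$.
\end{itemize}
There are no maps $[1]\to[0]$, and no non-identity maps $[0]\to[0]$ or $[1]\to[1]$. Hence $x\le y$ in $P_{\Theta(G)}$ holds iff $x=y$, or $x$ is a vertex and $y$ is an edge having $x$ as head or tail. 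Since the head and tail of each directed copy of $e$ are precisely the endpoints of $e$, $\phi$ preserves and reflects the order. This proves the observation.

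The main obstacle is not the order computation, which is immediate, but the identification of underlying sets when an edge has distinct endpoints. Under the first reading, with both directed copies kept, the equality fails as stated, because $P_{\Theta(G)}$ has two incomparable elements above the same pair of vertices. So I would make the chosen reading explicit. I would also note that under the first reading the fibre of the quotient map $P_{\Theta(G)}\to\mathcal{P}_G$ over each edge $e$ consists of order-isomorphic copies. This is why the two posets yield the same incidence data for (co)sheaves later.
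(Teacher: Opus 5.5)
Your diagnosis is the important part, and it is correct: taken literally the observation is false, and the paper states it without proof. For the single edge $G$ of Figure~\ref{fig-g1}, $\mathcal{P}_G=\{v,w,e\}$ has three elements, whereas $P_{\Theta(G)}=\{v,w,e_1,e_2\}$ has four. So the two posets are not even isomorphic. The paper effectively concedes this later. Example~\ref{graph-alex} says that $A(\mathcal{P}_G)$ and $A(P_{\Theta(G)})$ are in general different, and it relates them only through the continuous open map $\phi$, which sends both $v\to w$ and $w\to v$ to $e$. Observation~\ref{bod-sh} then states the true comparison via forgetting orientations, not via $\Theta$. Your computation of the order on $P_X$ for a digraph is also correct: only identities and the cofaces $d_0,d_1\colon[0]\to[1]$ occur, so $x\le y$ iff $x=y$ or $x$ is the head or tail of $y$.

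What you need to change is the framing. Your ``second reading'' is not a reading of $\Theta(G)$ at all. $\Theta$ is defined to produce both $v_1\to v_2$ and $v_2\to v_1$, and the sentence just before the observation introduces an orientation as \emph{another} way of turning $G$ into a digraph. So ``This proves the observation'' overclaims.

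What you have actually proved is a corrected statement:
\begin{itemize}
\item for any orientation $\vec G\subseteq\Theta(G)$ of $G$ one has $\mathcal{P}_G\cong P_{\vec G}$;
\item the composite $\mathcal{P}_G\cong P_{\vec G}\hookrightarrow P_{\Theta(G)}$ is an order-embedding section of the monotone surjection $\phi\colon P_{\Theta(G)}\to\mathcal{P}_G$;
\item $\phi$ itself is an isomorphism exactly when every edge of $G$ is a loop.
\end{itemize}
This corrected form is the one compatible with the later uses in Definition~\ref{Subgraphtop}(2) and Observation~\ref{bod-sh}. Present it as a counterexample plus a correction, not as a proof of the stated equality.

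Also drop your closing sentence. Since $\phi$ identifies two incomparable elements, a functor on $P_{\Theta(G)}$ (of either variance) may assign unrelated spaces and maps to $v_1\to v_2$ and $v_2\to v_1$. So the two posets do \emph{not} carry the same (co)sheaf data. Comparing sheaves along $\phi$ is exactly the nontrivial issue the paper has to treat in Observations~\ref{obs:moveshvs}, \ref{obs:moveshvs2}, \ref{obs:moveshvalex} and~\ref{obs:moveshvalex2}.
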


}

\section{Grothendieck Topologies and Sheaves}
\label{gr-sec}
In this section we take advantage of the language of Grothendieck topologies
we need later on for
the study of sheaves on graphs, \cite[Exposé ii]{SGA4}, \cite{Vis}, \cite{SP}.

\subsection{Grothendieck topologies, gross coverings, bisheaves}

Given a category $\cC$ we denote simply as $\Pre(\cC)$ the category of presheaves
of sets on $\cC$.
Given a Grothendieck site $(\cC,\cT)$ we will denote as 
$\Sh(\cC,\cT)$ the category of sheaves of sets on it. In the case of sheaves taking values in some category $\cA$, we shall use the notation $\Sh((\cC,\cT),\cA)$ if confusion may arise.

\begin{definition}
Let be $\cC$ a category, and $\cU=\lbrace \varphi_i:U_i\rightarrow U\rbrace_{i\in I}$ be
a covering of $U$, that is a family of arrows
with fixed target $U$.
We say that $\cU$ is \textit{gross} if there exists $j\in I$ such that $\varphi_j$
is an isomorphism.
Given a Grothendieck topology $\cT$ on $\cC$, an object $U \in \cC$ is called \textit{gross} if every covering of $U$ is gross.

If $\mathcal{V}$ $=\lbrace V_j\rightarrow U\rbrace_{j\in J}$
is a covering of $U$, we say that
$\mathcal{V}$ is a \emph{refinement} of $\mathcal{U}$ 
if for every $j\in J$ there exists $i\in I$ such that 
$V_j\rightarrow U$ factors through $U_i\rightarrow U$.
\end{definition}

The definition of gross object is adapted from the one that can be found in \cite[Section 2.5]{FrTop}, 
(notice that in \cite{FrTop} sieves are used).
To help the geometric intuition, we recall (see \cite[Section 2.5]{FrTop}) that an open set $U$ in a topological space
is gross 
if and only if it is irreducible, i.e. if and only if it is
not the union of its proper open subsets.
So $U$ is gross if and only if
there is a point $p\in U$ such that any open set containing $p$ contains $U$.

\begin{lemma}\label{lemmagroseq}
For a given category $\cC$, let us consider $\cU=\lbrace f_i:U_i\rightarrow U\rbrace_{i\in I}$ and
$\cV=\lbrace g_j:V_j\rightarrow U\rbrace_{j\in J}$ two gross families of arrows with fixed target. Then $\cU$ is
equivalent to $\cV$,
i.e. $\cU$ is a refinement of $\cV$ and vice-versa.
\end{lemma}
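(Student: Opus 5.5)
The plan is to exhibit explicit factorizations through the isomorphism contained in each gross family. By definition of grossness, there exist indices $i_0\in I$ and $j_0\in J$ such that $f_{i_0}:U_{i_0}\rightarrow U$ and $g_{j_0}:V_{j_0}\rightarrow U$ are isomorphisms. Both families have the same target $U$, so it suffices to show two things: every arrow of $\cV$ factors through $f_{i_0}$, and every arrow of $\cU$ factors through $g_{j_0}$.

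First, to show that $\cV$ refines $\cU$, fix $j\in J$ and set $h_j:=f_{i_0}^{-1}\circ g_j : V_j\rightarrow U_{i_0}$. Then
\[
f_{i_0}\circ h_j = f_{i_0}\circ f_{i_0}^{-1}\circ g_j = g_j ,
\]
so $g_j$ factors through $f_{i_0}$. Since $j$ was arbitrary, the same index $i_0$ works for every $j$, and $\cV$ is a refinement of $\cU$.

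Second, the symmetric argument gives the other direction. For each $i\in I$, the map $k_i:=g_{j_0}^{-1}\circ f_i: U_i\rightarrow V_{j_0}$ satisfies $g_{j_0}\circ k_i=f_i$. Hence $\cU$ is a refinement of $\cV$, and the two families are equivalent in the sense of the statement.

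There is no real obstacle here. The one point to note is that refinement is defined purely through factorization of arrows with a common target. It therefore makes sense for arbitrary families of arrows in $\cC$, and the lemma needs no Grothendieck topology, nor any requirement that $\cU$ and $\cV$ be coverings in some topology $\cT$.
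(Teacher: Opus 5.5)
Your proof is correct and matches the paper's argument: in each direction you pick the index whose arrow is an isomorphism and factor every arrow of the other family through it by composing with its inverse. Your closing remark that no Grothendieck topology is needed also agrees with how the paper states the lemma, for arbitrary families of arrows in $\cC$.
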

\begin{proof}
We show that $\cV$ is a refinement of $\cU$: the converse can be proven analogously. As $\cU$ is gross,
there exists $\alpha\in I$ such that $f_\alpha:U_\alpha\rightarrow U$ is an isomorphism. If we define for
all $j\in J$ $\varphi_j:=f_\alpha^{-1}\circ g_j$ we have that for all $j\in J$ the map $g_j$ factors through
$f_\alpha$ as $f_\alpha\circ\varphi_j =g_j$.
\end{proof}

\begin{remark}
Lemma \ref{lemmagroseq}
shows that any family of arrows with fixed target $U\in\cC$,
for $\cC$ given, is a refinement of any gross family of arrows with fixed target $U$.
\end{remark}

\begin{observation}
Let be $(\cC,\cT)$ a Grothendieck site, where every object of $\cC$ is gross.
Then, by Lemma \ref{lemmagroseq}, $\cT$ is equivalent to the chaotic
topology
(see for example \cite[Section 2.3.5]{Vis} for the notion of equivalence between Grothendieck topologies), 
i.e. the topology where all coverings are isomorphisms. 
\end{observation}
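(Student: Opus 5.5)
The plan is to prove the observation directly from Lemma \ref{lemmagroseq}, using the notion of equivalence of Grothendieck topologies in \cite[Section 2.3.5]{Vis}. Two pretopologies $\cT,\cT'$ on $\cC$ are equivalent when every covering of one is refined by some covering of the other, and vice versa; equivalently, they generate the same sieves and hence the same category of sheaves. So I will show two things. First, every $\cT$-covering is refined by a chaotic covering. Second, every chaotic covering is refined by a $\cT$-covering.

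First I will make precise what the chaotic topology is. Its coverings of $U$ are the families consisting of isomorphisms onto $U$; the minimal choice is the single family $\{\id_U\}$. Any such family is gross in the sense of the Definition, since it contains an isomorphism.

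Now fix $U\in\cC$ and a $\cT$-covering $\cU$ of $U$. By hypothesis $U$ is a gross object, so every covering of $U$ is gross, and in particular $\cU$ is gross. A chaotic covering $\cV$ of $U$ is gross by the previous paragraph. Both are therefore gross families with target $U$, and Lemma \ref{lemmagroseq} says that $\cU$ is a refinement of $\cV$ and $\cV$ is a refinement of $\cU$. This gives the two-sided refinement condition object by object, and hence the equivalence of $\cT$ with the chaotic topology.

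The only point needing care, and the step I expect to be the real obstacle, is that the refinement condition must be checked in both directions. For that, $U$ must actually have at least one $\cT$-covering. This holds because every pretopology contains the isomorphisms, in particular $\{\id_U\}$, as coverings. Alternatively one can phrase everything in terms of sieves: a gross covering generates the maximal sieve, since a sieve containing an isomorphism onto $U$ contains every arrow into $U$. Then $\cT$ and the chaotic topology both have only the maximal sieves as covering sieves, so they define the same Grothendieck topology.

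As a corollary worth noting, every presheaf is then a sheaf for $\cT$.
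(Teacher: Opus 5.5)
Your proposal is correct and follows the paper's own argument. Every $\cT$-covering of $U$ is gross because $U$ is a gross object, and every chaotic covering is gross because it contains an isomorphism. Lemma \ref{lemmagroseq} then gives refinement in both directions, and $\{\id_U\}\in\Cov(U)$ guarantees that a $\cT$-covering of $U$ exists. The paper leaves this implicit in the phrase ``by Lemma \ref{lemmagroseq}''; your write-up just makes it explicit.
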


\begin{lemma}
Let $(\cC,\cT)$ be a Grothendieck site and $U\in\cC$ a gross object. If
$f:U\rightarrow V$
is an isomorphism, then $V$ is a gross object.
\end{lemma}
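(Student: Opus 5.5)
We take an arbitrary covering $\cV=\lbrace g_j:V_j\rightarrow V\rbrace_{j\in J}$ in $\cT$ and must show that some $g_j$ is an isomorphism. The natural move is to transport $\cV$ back to $U$ along $f$ and use that $U$ is gross. Here we use the axioms of a Grothendieck (pre)topology in the sense of \cite{SGA4}, \cite{Vis}: coverings are stable under base change and composition, and isomorphisms are coverings.

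First step: form the base change of $\cV$ along $f:U\rightarrow V$. This gives the family $\lbrace \pi_j: V_j\times_V U\rightarrow U\rbrace_{j\in J}$, which is a covering of $U$ by stability under base change. Since $f$ is an isomorphism, the fibre products exist. Concretely, one may take $V_j\times_V U\cong V_j$ with projections $\id_{V_j}$ and $f^{-1}\circ g_j$. So, up to the harmless identification of fibre products up to unique isomorphism, the pulled-back covering is $\lbrace f^{-1}\circ g_j:V_j\rightarrow U\rbrace_{j\in J}$. Alternatively, we can avoid base change: compose the covering $\lbrace f^{-1}:V\rightarrow U\rbrace$ (an isomorphism, hence a covering) with $\cV$. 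By the composition axiom, $\lbrace f^{-1}\circ g_j\rbrace_{j\in J}$ is a covering of $U$.

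Second step: since $U$ is gross, this covering is gross. Hence there is $j_0\in J$ with $f^{-1}\circ g_{j_0}$ an isomorphism. Then $g_{j_0}=f\circ(f^{-1}\circ g_{j_0})$ is a composition of isomorphisms, hence an isomorphism. So $\cV$ is gross. As $\cV$ was arbitrary, $V$ is gross.

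The only delicate point is the first step: making sure the transported family really is a covering under the paper's conventions for Grothendieck topologies. The composition argument sidesteps any bookkeeping with fibre products, so we would present it that way and mention base change only as an alternative.
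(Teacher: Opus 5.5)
Your proof is correct and is essentially the paper's argument. The paper argues by contrapositive: the base change along $f$ of a non-gross covering of $V$ is a non-gross covering of $U$. That is exactly your base-change variant, with your identification $V_j\times_V U\cong V_j$ supplying the detail the paper leaves implicit, and your use of the composition axiom with the covering $\{f^{-1}\colon V\rightarrow U\}$ is an equally valid and slightly cleaner way to transport the covering to $U$.
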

\begin{proof}
If $V$ is not gross, then there exists $\cV=\lbrace g_j:V_j\rightarrow V\rbrace_{j\in J}\in\Cov(V)$ that is not a
gross cover. Then, its base change along $f$ is
a non gross cover of $U$.
\end{proof}

When considering sites, we have the notion of continuous functor (see
  \cite[\href{https://stacks.math.columbia.edu/tag/00WV}{Definition 00WV}]{SP}).
\begin{definition} 
Let $(\cC,\cT)$ and $(\cD,\cG)$ be Grothendieck sites. A functor $F:\cC\rightarrow \cD$ is called \textit{continuous},
if for every $U\in\cC$, $\mathcal{U}=\lbrace f_i:U_i\rightarrow U\rbrace_{i\in I}\in\Cov(U)$:
\begin{itemize}
\item $\lbrace F(f_i):F(U_i)\rightarrow F(U)\rbrace_{i\in I}\in\Cov(F(U))$,
\item for any morphism $Y\rightarrow U$ in $\cC$, the morphism $F(Y\times_UU_i)\rightarrow F(Y)\times_{F(U)}F(U_i)$ is an isomorphism.
\end{itemize}
Given a continuous functor $F:(\cC,\cT)\rightarrow (\cD,\cG)$, there is a well defined functor $F^p :\Sh(\cD,\cG)\rightarrow \Sh(\cC,\cT)$, $P\mapsto P\circ F$ (see \cite[\href{https://stacks.math.columbia.edu/tag/00WW}{Lemma 00WW}]{SP}).
\end{definition}

Let $(\cC,\cT)$ be a Grothendieck site. We denote by $\cC_{\mathrm{gross}}$ the full subcategory
of $\cC$ consisting of gross objects, writing $\cC_{\mathrm{gross}}(\cT)$ if confusion may arise.

\begin{theorem}\label{GrossBasisThm}
Let $(\cC,\cT)$ be a Grothendieck site.
Assume that for every object $U\in\cC$ there exists a covering $\lbrace U_i\rightarrow U\rbrace_{i\in I}$ of it
such that for every $i\in I$, $U_i$ is a gross object. Then the inclusion functor 
 $\cC_{\mathrm{gross}}\rightarrow \cC$
is continuous and induces an equivalence $\Pre(\cC_{\mathrm{gross}})\simeq\Sh(\cC,\cT)$.
\end{theorem}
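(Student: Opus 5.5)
The plan is to recognise the statement as an instance of the comparison lemma: $\cC_{\mathrm{gross}}$ is a dense subcategory of $\cC$ carrying the induced topology, and that induced topology is equivalent to the chaotic one. I will work with the topology on $\cC_{\mathrm{gross}}$ whose coverings are the gross coverings. By Lemma \ref{lemmagroseq} this topology is equivalent to the chaotic topology, so every presheaf on $\cC_{\mathrm{gross}}$ is a sheaf. Hence $\Sh(\cC_{\mathrm{gross}})=\Pre(\cC_{\mathrm{gross}})$, and the restriction functor $F^p$ lands in $\Pre(\cC_{\mathrm{gross}})$.

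\emph{Continuity.} I will check the two conditions of the definition for the inclusion $\iota:\cC_{\mathrm{gross}}\to\cC$.
\begin{enumerate}
\item A covering of a gross object $U$ is gross, so it contains an isomorphism $U_j\to U$. The image of such a family in $\cC$ contains an isomorphism, and a family containing an isomorphism is a covering in any site, because a single isomorphism is a covering and coverings are stable under refinement.
\item For fibre products, note that the base change of an isomorphism is an isomorphism, so the isomorphism member of the covering survives base change.
\end{enumerate}
The delicate point is that fibre products $Y\times_U U_i$ need not be gross or even lie in $\cC_{\mathrm{gross}}$. I would handle this by viewing $\cC_{\mathrm{gross}}$ with the equivalent chaotic topology, whose coverings are the single isomorphisms. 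For those, $Y\times_U U\cong Y$ exists in $\cC_{\mathrm{gross}}$, so the condition holds trivially. Continuity is then established for an equivalent presentation of the same topos, which suffices.

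\emph{Equivalence.} I will define $\Phi:\Sh(\cC,\cT)\to\Pre(\cC_{\mathrm{gross}})$ by restriction, $P\mapsto P\circ\iota$. For the inverse $\Psi$, take $Q\in\Pre(\cC_{\mathrm{gross}})$ and set
\[
\Psi(Q)(U)=\lim_{(V\to U),\,V\ \mathrm{gross}} Q(V),
\]
the right Kan extension along $\iota$, with the limit taken over the comma category $\cC_{\mathrm{gross}}/U$.
\begin{enumerate}
\item If $U$ is gross, this comma category has $\id_U$ as terminal object, so $\Psi(Q)(U)=Q(U)$. Hence $\Phi\Psi\cong\id$.
\item To show $\Psi(Q)$ is a sheaf, take a covering $\{U_i\to U\}$. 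A matching family gives compatible elements $Q(V)$ for all gross $V\to U_i$. The key claim is that every gross $V\to U$ factors through some $U_i$: pulling the covering back along $V\to U$ gives a covering of $V$, which is gross, so its isomorphism member provides a section $V\to V\times_U U_i\to U_i$.
\item It remains to check that the choice of factorisation does not matter. Two factorisations through $U_i$ and $U_j$ are compared via $V\times_U(U_i\times_U U_j)$, which is again a covering of $V$ and so admits a section. The matching condition then gives well-definedness, and gluing and uniqueness follow.
\item Finally, $\Psi\Phi\cong\id$. For a sheaf $P$ and any $U$, cover $U$ by gross objects $U_i$ using the hypothesis. Each $U_i\times_U U_j$ is covered by gross objects as well. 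Hence the sheaf condition expresses $P(U)$ as the limit of $P$ over gross objects mapping to $U$. Making this precise requires showing the gross covering sieve is cofinal in $\cC_{\mathrm{gross}}/U$, which again uses the factorisation argument from step 2.
\end{enumerate}

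The main obstacle is step 2, namely that gross objects factor through every cover, together with the compatibility bookkeeping in steps 3 and 4, which needs sieve-level arguments.
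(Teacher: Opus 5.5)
Your proposal is correct in outline, but it takes a different route from the paper. The paper's proof is a one-line citation. It applies the Stacks Project lemma on special cocontinuous functors (Tag 03A0) to the inclusion $\cC_{\mathrm{gross}}\to\cC$, implicitly with the chaotic topology on $\cC_{\mathrm{gross}}$, so that its sheaves are all presheaves. It leaves the hypotheses to the reader:
\begin{itemize}
\item continuity;
\item cocontinuity: a covering of a gross $U$ contains an isomorphism, hence is refined by $\{\id_U\}$;
\item the local fullness and faithfulness conditions, which are automatic for a full subcategory;
\item every object being covered by objects of $\cC_{\mathrm{gross}}$.
\end{itemize}
You instead reprove that lemma in this special case, with the right Kan extension $\Psi(Q)(U)=\lim_{V\to U}Q(V)$ as an explicit quasi-inverse. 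Your factorisation argument in step 2 is the cocontinuity hypothesis in disguise.

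Your version buys three things. It is self-contained. It describes explicitly the sheaf attached to a presheaf on gross objects: sections over $U$ are compatible families over gross objects mapping to $U$, which the paper uses tacitly when it later computes global sections of sheaves on graphs. And it shows where each hypothesis enters: grossness alone makes $\Psi(Q)$ a sheaf and gives $\Phi\Psi\cong\id$, gross covers are needed only for $\Psi\Phi\cong\id$, and continuity only for the first assertion. The paper's route is shorter and inherits naturality and bookkeeping from the general lemma.

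Three steps need repair, though none affects the strategy.

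\emph{Continuity check.} The claim ``a family containing an isomorphism is a covering in any site'' is false for pretopologies, which need not be saturated. Singleton isomorphism families already form a pretopology in which $\{\id_U,f\}$ is not a covering; the claim holds only at the level of sieves. This is harmless, since you end up with the chaotic topology, which is the right choice from the start.

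\emph{Step 3.} Grossness of $V$ only yields a section of $V\times_U(U_{i'}\times_U U_{j'})\to V$ for \emph{some} pair $(i',j')$, not for the pair being compared. Use the universal property instead. Factorisations $a\colon V\to U_i$ and $b\colon V\to U_j$ of the same $V\to U$ give $(a,b)\colon V\to U_i\times_U U_j$, and the matching condition then yields $\Psi(Q)(a)(s_i)=\Psi(Q)(b)(s_j)$.

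\emph{Step 4.} Rather than a cofinality claim, argue directly. Injectivity of $P(U)\to\lim_{V\to U}P(V)$ is the sheaf axiom for a gross cover $\{f_i\colon U_i\to U\}$. For surjectivity, a compatible family $(t_g)$ gives elements $t_{f_i}\in P(U_i)$. These agree on each $U_i\times_U U_j$ because they agree on a gross cover of it, so they glue to some $s\in P(U)$. Finally $s|_V=t_g$ for every gross $g\colon V\to U$, by your factorisation argument.
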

\begin{proof}
It suffices to check that the assumptions of
\cite[\href{https://stacks.math.columbia.edu/tag/03A0}{Tag 03A0}]{SP} are satisfied.
\end{proof}

This theorem is key to our treatment, as it will allows us,
to define a sheaf just giving a presheaf on gross objects.
Moreover, in the case of a topological space and the category of
its open sets forming a site,
the gross objects correspond to irreducible
open sets. Hence,
we have immediately the following well known fact.

\begin{corollary}\label{cor-simple}
Let $X$ be a topological space. If $X$ has a basis consisting of irreducible open sets, then
there is an equivalence between presheaves on irreducible open sets in $X$
and sheaves on $X$.
\end{corollary}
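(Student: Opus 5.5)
The plan is to deduce the corollary directly from Theorem \ref{GrossBasisThm}, applied to the site $(\open(X),\cT_X)$. Here $\open(X)$ is the poset of open subsets of $X$, viewed as a category with morphisms the inclusions, and coverings $\{U_i\subseteq U\}$ are families with $\bigcup_i U_i=U$.

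First I will check that the gross objects of this site are exactly the irreducible open sets, in the sense recalled above: nonempty opens $U$ that are not the union of their proper open subsets. In a poset category an arrow $U_i\to U$ is an isomorphism only when $U_i=U$, so a covering of $U$ is gross exactly when it contains $U$ itself.
\begin{itemize}
\item If $U$ is not a union of proper open subsets, then every covering of $U$ must contain $U$, so $U$ is gross.
\item Conversely, if $U$ is gross, then the family of all proper open subsets of $U$ cannot be a covering, so $U$ is not their union.
\end{itemize}
Some care is needed with $U=\emptyset$. The empty family covers $\emptyset$ and is not gross, so $\emptyset$ is not gross. This matches the convention that irreducible sets are nonempty. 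Hence $\open(X)_{\gross}$ is the full subcategory of irreducible open sets.

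Second, I will verify the hypothesis of Theorem \ref{GrossBasisThm}. Since $X$ has a basis $\cB$ of irreducible open sets, every open $U$ is the union of the members of $\cB$ contained in it. This gives a covering of $U$ by gross objects, and for $U=\emptyset$ the empty covering works.

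Theorem \ref{GrossBasisThm} then yields $\Pre(\open(X)_{\gross})\simeq\Sh(X)$, which is the claim.

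The main obstacle is the identification of gross objects with irreducible opens, in particular the convention about the empty set and the equivalence of the two formulations of irreducibility. Everything else is immediate from Theorem \ref{GrossBasisThm}, whose continuity and fibre-product conditions are automatic here, since fibre products in $\open(X)$ are intersections.
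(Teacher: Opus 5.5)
Your proposal is correct and is essentially the paper's argument. The paper deduces the corollary at once from Theorem~\ref{GrossBasisThm}, noting (after Friedman) that the gross objects of the site of open sets of $X$ are exactly the irreducible open sets; you verify that identification explicitly, including the case $U=\emptyset$, and you rightly read ``irreducible'' in the paper's sense (not a union of proper open subsets), since with the usual topological notion the statement would fail.
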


\begin{remark}\label{SetstoVect}
The definitions and the results contained in this subsection hold true if we replace
the category of sets with the one of finite dimensional $\bk$-vector spaces for a fixed field $\bk$.
These results hold more in general for categories having as objects
``esp\'{e}ces de structure alg\'{e}brique d\'{e}finie par limites projectives finie'' as in \cite{SGA4}. Instead of defining this notion, following \cite[\href{https://stacks.math.columbia.edu/tag/00YR}{Section 00YR}]{SP}, we simply list a few categories that can be used in place of the category of sets
in the definitions and statements:
abelian groups, groups, monoids, rings, modules over a ring, Lie algebras.
\end{remark}
\subsection{Locally constant sheaves}

For this subsection we consider only Grothendieck sites  $(\cC,\cT)$
where every object of $\cC$ admits a covering by gross objects.
\begin{lemma}
Let $(\cC,\cT)$ be a Grothendieck site.
Then, the equivalence $\Pre(\cC_{\mathrm{gross}})\simeq\Sh(\cC,\cT)$ associates locally constant sheaves in $\Sh(\cC,\cT)$ to presheaves in $\Pre(\cC_{\mathrm{gross}})$ mapping all arrows of $\cC_{\mathrm{gross}}$ to isomorphisms.
\end{lemma}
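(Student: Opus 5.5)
We have to prove: under the equivalence of Theorem \ref{GrossBasisThm}, a sheaf $F$ is locally constant iff its restriction $F|_{\cC_{\mathrm{gross}}}$ sends every arrow between gross objects to an isomorphism. We take ``locally constant'' in the usual sense: every $U\in\cC$ has a covering $\{U_i\to U\}$ such that each $F|_{U_i}$ is constant, i.e.\ isomorphic to the sheafification of a constant presheaf with value some set $S_i$ on the localized site $\cC/U_i$.

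The first step is to record what the sheaf condition says on a gross object $V$. By Lemma \ref{lemmagroseq}, every covering of $V$ is equivalent to the trivial covering $\{\id_V\}$. Hence every presheaf satisfies the sheaf condition for coverings of $V$, and the same holds on any object of $\cC/V$ that is gross. We also need a stability fact: if $W\to V$ lies in $\cC/V$ and $W$ is gross in $\cC$, then $W$ is gross in $\cC/V$. This holds because coverings in $\cC/V$ are coverings in $\cC$. A Yoneda-type argument then computes sections of the sheafified constant presheaf $\underline{S}$ on a gross object $W$. Since the only covering we need is the identity, the plus construction gives $\underline{S}(W)=S$, and every restriction map between gross objects is the identity of $S$. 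This is the step I expect to need the most care, since sheafification must be controlled through arbitrary coverings. I plan to handle it by using the equivalence $\Pre(\cC_{\mathrm{gross}})\simeq\Sh$ directly. Under this equivalence, the constant sheaf corresponds to a presheaf on gross objects, and the constant presheaf $S$ on $\cC_{\mathrm{gross}}$ already has the correct universal property: maps to any sheaf $G$ correspond to compatible elements of $G$ on gross objects, that is, to maps $S\to\lim G$, exactly as for the constant sheaf.

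For the forward direction, suppose $F$ is locally constant and let $f:W\to V$ be an arrow in $\cC_{\mathrm{gross}}$. Choose a covering $\{V_i\to V\}$ on which $F$ is constant. Since $V$ is gross, this covering contains an isomorphism $V_{i_0}\to V$. Then $F|_{V_{i_0}}\cong\underline{S}$, and $W$ is a gross object over $V_{i_0}$ via $f$ composed with the inverse isomorphism. By the previous step, $F(f)$ is identified with $\id_S$, so it is an isomorphism.

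For the converse, suppose $P=F|_{\cC_{\mathrm{gross}}}$ sends every arrow to an isomorphism. Cover an arbitrary $U$ by gross objects $U_i$. On $\cC/U_i$, the gross objects are the gross $W\to U_i$, and each has a map to $U_i$, itself gross and terminal in $\cC/U_i$. Hence $P(W)\cong P(U_i)$ via the restriction map, and these identifications are compatible by functoriality. This yields an isomorphism of presheaves on $(\cC/U_i)_{\mathrm{gross}}$ between $P$ and the constant presheaf with value $P(U_i)$. By the first step and Theorem \ref{GrossBasisThm} applied to $\cC/U_i$, this gives $F|_{U_i}\cong\underline{P(U_i)}$. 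So $F$ is locally constant.
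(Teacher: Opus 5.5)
Your proposal is correct, and your forward direction is the paper's argument: any covering of a gross $V$ contains an isomorphism, so $F|_V$ is constant, and a constant sheaf restricts by isomorphisms between gross objects. You also rightly limit this to gross sources, whereas the paper loosely says ``all the morphisms having target $V$'', which fails for non-gross sources. The paper never justifies $\underline{S}(W)=S$ on gross $W$ and does not argue the converse, so your plus-construction/universal-property computation and your converse via Theorem~\ref{GrossBasisThm} applied to $\cC/U_i$ fill in steps the paper leaves implicit rather than taking a different route.
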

\begin{proof}
    Recall that the equivalence $i^\ast:\Sh(\cC,\cT)\rightarrow\Pre(\cC_{\mathrm{gross}})$ is induced by the inclusion $\cC_{\mathrm{gross}}\hookrightarrow\cC$.
    To check the statement, consider a locally constant sheaf $F\in\Sh(\cC,\cT)$. Because of the very definition of locally constant sheaf, we check that for every gross object $V$ of $\cC$, we have that $F_{|V}$ is a constant sheaf and therefore we know, by the very definition of $F_{|V}$ (see \cite[\href{https://stacks.math.columbia.edu/tag/00XZ}{Section 00XZ}]{SP}, the notation $j_V^{-1}(F)$ in place of our $F_{|V}$ is used there), that all the morphisms having target $V$ are sent by $F$ to isomorphisms. As a consequence, $i^\ast F$ has the desired form.
\end{proof}

The following theorem is the adaptation to our context of Theorems 3.6 and 3.7 of \cite{salashomotopy}.
Let  $\underline{A}$ be the sheafification of the constant presheaf on
a given Grothendieck site,
with value $A$, for a ring $A$.
\begin{theorem}
Let $(\cC,\cT)$ be a Grothendieck site
and let $\Lambda$ be a sheaf of rings on it. 
{A sheaf $M$ of $\Lambda$-modules} is quasi coherent if and only if for
every morphism between gross objects $U\rightarrow V$ we have an isomorphism
$$
M(V)\otimes_{\Lambda(V)}\Lambda(U)\rightarrow M(U)
$$
If $\Lambda=\underline{A}$
for a ring $A$, then $M$ is quasi coherent if and only if $M$ is locally constant.
\end{theorem}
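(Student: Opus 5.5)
Throughout, I work under the standing assumption of this subsection that every object of $\cC$ admits a covering by gross objects. The plan is to reduce everything to statements on gross objects using Theorem \ref{GrossBasisThm} and the previous lemma. The key observation is that a gross object $V$ behaves like a "local ring" for sheaves: any covering of $V$ contains an isomorphism onto $V$. Hence for any sheaf $S$, evaluation at $V$ is exact. More precisely, a sheaf of sets $S$ restricted to $V$ is determined by the presheaf $U\mapsto S(U)$ on the gross objects over $V$. Also, $V$ is terminal in the localized site $\cC/V$ up to equivalence of topologies, so $\Gamma(V,-)$ computes stalks.

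Recall that $M$ is quasi coherent if there is a covering $\{U_i\to U\}$ of each object such that $M|_{U_i}$ admits a presentation
\[
\bigoplus_{J}\Lambda|_{U_i}\lra\bigoplus_{K}\Lambda|_{U_i}\lra M|_{U_i}\lra 0 .
\]

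For the direction "quasi coherent implies the isomorphisms", I first refine the covering to a covering by gross objects. Given a gross $V$, a covering of $V$ on which $M$ has a presentation contains an isomorphism $V'\cong V$, so $M|_V$ itself has a global presentation. Evaluating this presentation at $V$ and at a gross $U\to V$ gives right-exact sequences, since evaluation at gross objects is exact, being essentially a stalk. Since $\Lambda(V)\otimes_{\Lambda(V)}\Lambda(U)=\Lambda(U)$, and since $-\otimes_{\Lambda(V)}\Lambda(U)$ is right exact and commutes with direct sums, the five lemma yields $M(V)\otimes_{\Lambda(V)}\Lambda(U)\cong M(U)$.

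For the converse, I fix a gross $V$ and choose a presentation of the $\Lambda(V)$-module $M(V)$:
\[
\Lambda(V)^{(J)}\to\Lambda(V)^{(K)}\to M(V)\to0 .
\]
This induces a morphism of sheaves $\Lambda|_V^{(K)}\to M|_V$, and a candidate presentation of $M|_V$. By Theorem \ref{GrossBasisThm} applied to the site $\cC/V$, whose gross objects are the gross objects over $V$, it suffices to check exactness on gross $U\to V$. There the sequence becomes the tensored sequence $\Lambda(U)^{(J)}\to\Lambda(U)^{(K)}\to M(V)\otimes\Lambda(U)\to0$, which is exact by right exactness, and its last term is $M(U)$ by hypothesis. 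Since gross objects cover everything, $M$ is quasi coherent.

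For $\Lambda=\underline{A}$, I first show that $\underline{A}(U)=A$ on gross $U$, with identity restrictions. This holds because the sheafification of a presheaf agrees with it on gross objects, as every covering there is trivial; I expect this step to be the one needing the most care. The condition then reads $M(V)\otimes_A A\to M(U)$ is an isomorphism, i.e.\ every restriction map between gross objects is an isomorphism. By the previous lemma this is exactly local constancy.

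The main obstacle I foresee is justifying the compatibility of sheaf direct sums and cokernels with evaluation on gross objects. I would handle it by transporting everything through the equivalence $\Pre(\cC_{\mathrm{gross}})\simeq\Sh(\cC,\cT)$ of Theorem \ref{GrossBasisThm} (valid for modules by Remark \ref{SetstoVect}). On the presheaf side, colimits are computed pointwise, so these compatibilities become immediate.
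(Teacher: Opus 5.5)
Your proposal is correct and follows the paper's route: the paper's proof transplants the stalk argument of Theorems 3.6--3.7 in \cite{salashomotopy}, replacing the minimal open sets $U_p, U_q$ by gross objects, and your use of $\Pre(\cC_{\mathrm{gross}})\simeq\Sh(\cC,\cT)$ (on $\cC$ and on $\cC/V$) to make evaluation at gross objects exact and compatible with direct sums is exactly what justifies that substitution. Two small precisions: first, coverings of a gross object need not be trivial, only gross, hence equivalent to the identity covering by Lemma \ref{lemmagroseq}, which is why sheafification leaves values on gross objects unchanged; second, the implication ``restrictions between gross objects are isomorphisms $\Rightarrow$ locally constant'' is only asserted, not proved, in the previous lemma, but it follows from your own argument, because restriction from a gross $V$ gives an isomorphism from the constant presheaf with value $M(V)$ onto $M|_V$ on gross objects over $V$, hence $M|_V\cong\underline{M(V)}$ by Theorem \ref{GrossBasisThm} applied to $\cC/V$.
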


\begin{proof}
For the first part, one notice that replacing in the proof of Theorem 3.6 in \cite{salashomotopy} $U_p$ and $U_q$ with two gross objects $U$ and $V$ and $q\geq p$ with an arrow between $U$ and $V$ the same proof goes through. Using the same substitution in the proof of Theorem 3.7 in \cite{salashomotopy} 
and replacing $\cO$ with $\underline{A}$ there we conclude.
\end{proof}

\begin{lemma}
Let $\mathbb{K}$ be a field and let $(\cC,\cT)$ be a Grothendieck site.
Then the category of $\underline{\mathbb{K}}$-modules
is equivalent
to the category of sheaves of $\mathbb{K}$-vector spaces on the site $(\cC,\cT)$.
\end{lemma}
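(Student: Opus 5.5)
The plan is to build the equivalence by forgetting and upgrading module structures, using only that the constant sheaf $\underline{\K}$ is the sheafification of the constant presheaf $P_\K$ with value $\K$, together with the universal property of sheafification.

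\emph{From $\underline{\K}$-modules to sheaves of vector spaces.} Given a sheaf $M$ of $\underline{\K}$-modules, restrict scalars along the canonical morphism of presheaves of rings $P_\K \to \underline{\K}$. On each object $U$ this is the ring map $\K \to \underline{\K}(U)$, which makes $M(U)$ a $\K$-vector space. Restriction maps are $\underline{\K}$-semilinear, and $P_\K \to \underline{\K}$ commutes with restrictions, so restriction maps are $\K$-linear. Hence $M$ is a sheaf of $\K$-vector spaces; sheafness is a condition on underlying sets and is unchanged. Morphisms of $\underline{\K}$-modules are in particular $\K$-linear, so this defines a faithful functor $\Phi$.

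\emph{From sheaves of vector spaces to $\underline{\K}$-modules.} Conversely, let $M$ be a sheaf of $\K$-vector spaces. The $\K$-module structure is a morphism of presheaves of rings $P_\K \to \mathcal{E}nd(M)$, where $\mathcal{E}nd(M)(U)=\Hom(M_{|U},M_{|U})$ is a sheaf of rings. I would invoke this standard fact from \cite{SP} rather than reprove it. By the universal property of sheafification, applied in the category of sheaves of rings (which is compatible with the forgetful functor by Remark \ref{SetstoVect}), this morphism factors uniquely through $\underline{\K}$. That unique factorization gives $M$ a $\underline{\K}$-module structure extending the given $\K$-structure.

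\emph{Equivalence.} The extension is unique, so $\Phi$ is essentially surjective, indeed bijective on objects. For fullness, take a $\K$-linear map of sheaves $f:M\to N$. The two $\underline{\K}$-actions $a\cdot f(m)$ and $f(a\cdot m)$, viewed as maps $\underline{\K}\times M\to N$, agree after composing with $P_\K\times M\to \underline{\K}\times M$. Equivalently, the two ring maps $\underline{\K}\to \mathcal{H}om(M,N)$-compatible structures coincide on $P_\K$, so they coincide by the uniqueness in sheafification. Hence $f$ is $\underline{\K}$-linear, and $\Phi$ is an isomorphism of categories.

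\emph{Main obstacle.} The delicate step is the fullness and uniqueness argument. It requires phrasing the compatibility as an equality of two maps out of a sheafification. The cleanest route is to note that both sides define morphisms of presheaves $P_\K\to\mathcal{H}om(M,N)$ into a sheaf. These agree because $f$ is $\K$-linear, so their extensions to $\underline{\K}$ agree.
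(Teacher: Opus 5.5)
Your argument is correct, and it takes a different route from the paper.

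\textbf{The paper's route.} The paper works under the standing hypothesis of that subsection, namely that every object admits a covering by gross objects. It observes that $\underline{\K}$ is locally constant, so its restriction to $\cC_{\mathrm{gross}}$ is the constant presheaf $\K$. It then applies Theorem \ref{GrossBasisThm} twice: first to identify $\underline{\K}$-modules with presheaves of $\K$-vector spaces on $\cC_{\mathrm{gross}}$, and then to identify these with sheaves of $\K$-vector spaces.

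\textbf{Your route.} You never mention gross objects. You use only the universal property of $P_\K\to\underline{\K}$, the fact that $\mathcal{E}nd(M)$ and $\mathcal{H}om(M,N)$ are sheaves, and uniqueness of extensions out of a sheafification. From these you get that forgetting scalars is an isomorphism of categories, acting as the identity on underlying sheaves.

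\textbf{What each buys.} Your proof holds on an arbitrary site, i.e.\ for the lemma as literally stated, without the subsection's hypothesis. It also does not need the implicit extension of Theorem \ref{GrossBasisThm} to modules over a sheaf of rings, which the paper uses without comment. The paper's proof is shorter given its machinery and fits the subsection's theme of reducing everything to presheaves on gross objects.

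\textbf{Two cosmetic points.}
\begin{enumerate}
\item $\mathcal{E}nd(M)$ is a sheaf of noncommutative rings, so appealing to Remark \ref{SetstoVect} is not quite on target. The clean justification that the factorization $\underline{\K}\to\mathcal{E}nd(M)$ is a ring map is that sheafification commutes with finite products. Then $\phi(ab)$ and $\phi(a)\phi(b)$ are two maps out of $\underline{\K}\times\underline{\K}=(P_\K\times P_\K)^\#$ that agree on $P_\K\times P_\K$.
\item In the fullness step, the maps $a\mapsto f\circ a$ and $a\mapsto a\circ f$ from $\underline{\K}$ to $\mathcal{H}om(M,N)$ are just morphisms of sheaves, not ring maps. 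Your ``main obstacle'' paragraph already phrases this correctly.
\end{enumerate}
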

\begin{proof}
    First notice that $\underline{\mathbb{K}}$ is locally constant, therefore its restriction to the category $\Pre(\cC_{\mathrm{gross}})$ is isomorphic to the constant presheaf with value $\mathbb{K}$. As a consequence, by Theorem \ref{GrossBasisThm} the category of $\underline{\mathbb{K}}$-modules on the site $(\cC,\cT)$ is equivalent to the category of presheaves of $\mathbb{K}$-vector spaces on $\cC_{\mathrm{gross}}$. As the latter is equivalent to the category of sheaves of $\bk$-vector spaces on the site $(\cC,\cT)$ we conclude.
\end{proof}

\subsection{Laplacian pairs} \label{lapl-pa-sec}
We first recall a few notions about \v{C}ech complexes and then
we define the notion of Laplacian pair.
We will need
these concepts later on, when we discuss differential operators on directed graphs.

\begin{observation}\label{Cechrmk}
Consider a Grothendieck site $(\cC, \cT)$, a finite covering
$\cU:=\lbrace U_i\rightarrow U\rbrace_{i\in I}$ and a sheaf
$F$ of abelian groups. From now on, till the end of this
section, the reader can check that we can replace ``abelian groups'' with many
of the categories listed in \ref{SetstoVect}. 
If we give a well-order to $I$, we can define two different, however \emph{homotopic}
\v{C}ech complexes. One has in degree $n$ the group:
$$
\widetilde{C}^n(\cU,F):=\oplus_{(i_0,...,i_n)\in I^{n+1}}F(U_{i_0}\times_U\cdots\times_U U_{i_n})
$$
while the other one the group
$$
C^n(\cU,F):=\oplus_{(i_0<...<i_n)}F(U_{i_0}\times_U\cdots\times_U U_{i_n}).
$$
The former is the ordinary \v{C}ech complex while the latter is called the ordered \v{C}ech complex.
Both have practical advantages and disadvantages, even though they compute the same cohomology (see \cite[\href{https://stacks.math.columbia.edu/tag/01FG}{Section 01FG}]{SP} for the case of sheaves on a topological space or \cite{Conradcech}). In this paper, ordered complexes will make the computations easier, therefore we shall always assume to use them every time we will deal with \v{C}ech complexes. We explicitly note that in this case the differentials $d_n:C^n(\cU,F)\rightarrow C^{n+1}(\cU,F)$ are given by the formula
\begin{equation}\label{eq-d}
d_n(s)_{i_0,...,i_{n+1}}=\sum_{j=0}^{n+1}(-1)^jF(\pr_{_{i_0,...,\widehat{i}_j,...,i_{n+1}}})
(s_{i_0,...,\widehat{i}_j,...,i_{n+1}})
\end{equation}
where $\pr_{_{i_0,...,\widehat{i}_j,...,i_{n+1}}}:U_{i_0}\times_U\cdots\times_U U_{i_n}\rightarrow U_{i_0}\times_U\cdots\times_U\widehat{U}_{i_j}\cdots\times_U U_{i_n}$ is the usual projection morphism and the hat denotes omission. We can give analogous definitions of \v{C}ech complexes for 
cosheaves (see \cite{BR97} or \cite{curry} for this notion) that we will call \v{C}ech cocomplexes
We will denote these complexes as $\check{C}(\cU,F)$.
\end{observation}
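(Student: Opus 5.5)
The approach is to treat the \v{C}ech construction as a cosimplicial object and to compare the two complexes by explicit chain maps. The content of the observation to be proved is twofold: formula (\ref{eq-d}) defines a differential, and the ordered complex $C^\bullet(\cU,F)$ is homotopy equivalent to $\widetilde{C}^\bullet(\cU,F)$, so both compute the same cohomology.

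First I will check that $[n]\mapsto \widetilde{C}^n(\cU,F)$ is a cosimplicial abelian group. The coface maps $\delta^j$ are induced by the projections $\pr_{i_0,\dots,\widehat{i}_j,\dots,i_{n+1}}$, and the codegeneracies by the diagonals $U_{i_0}\times_U\cdots\times_U U_{i_n}\to U_{i_0}\times_U\cdots\times_U U_{i_k}\times_U U_{i_k}\times_U\cdots\times_U U_{i_n}$. The cosimplicial identities follow from the associativity and universal property of iterated fibre products together with the functoriality of $F$. Hence the alternating sum $d=\sum_j(-1)^j\delta^j$ satisfies $d^2=0$ by the standard cancellation of terms.

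Next I will show that restricting to strictly increasing tuples, $\pi:\widetilde{C}^\bullet\to C^\bullet$, is a chain map. If $(i_0<\dots<i_{n+1})$ is increasing, then every tuple obtained by omitting one index is again increasing. So the component of $d(s)$ at an increasing tuple only involves components of $s$ at increasing tuples, and that component is exactly formula (\ref{eq-d}). This also shows directly that (\ref{eq-d}) squares to zero. In the other direction I will build a section $\iota:C^\bullet\to\widetilde{C}^\bullet$ with values in the alternating cochains:
\begin{itemize}
\item $\iota(s)$ vanishes on tuples with a repeated index;
\item for a tuple that is a permutation $\sigma$ of an increasing tuple, $\iota(s)$ takes the value $\mathrm{sgn}(\sigma)$ times the image of $s$ under $F$ of the canonical permutation isomorphism of fibre products.
\end{itemize}
A direct check shows that $\iota$ is a chain map and that $\pi\circ\iota=\id$.

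It remains to produce a homotopy $\iota\circ\pi\simeq\id_{\widetilde{C}^\bullet}$. Here I plan to adapt \cite[\href{https://stacks.math.columbia.edu/tag/01FG}{Section 01FG}]{SP} (see also \cite{Conradcech}). First show that the inclusion of alternating cochains into $\widetilde{C}^\bullet$ is a homotopy equivalence, using a homotopy built from the codegeneracies and the well-order on $I$, namely $h(s)_{i_0\dots i_{n-1}}=\sum_k(-1)^k s_{i_0\dots i_k\,m\,i_{k+1}\dots}$ with suitable choices. Then identify the alternating cochains with $C^\bullet$ via $\pi$.

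This last step is the main obstacle. For a topological space one has $U_i\cap U_i=U_i$, but on a general site $U_i\times_U U_i\not\cong U_i$. So tuples with repeated indices are not degenerate in the naive sense, and the usual argument that kills them must be rechecked. I would handle this by working purely with the cosimplicial structure, so that only the codegeneracies, and never isomorphisms $U_i\times_UU_i\cong U_i$, are used. If that fails, the fallback is to prove that both complexes compute \v{C}ech cohomology by realising them as $\Hom$ out of two resolutions of the same presheaf: the free abelian presheaves on the \v{C}ech nerve and on its ordered subnerve. One then compares these resolutions objectwise, which reduces to the contractibility of a simplex and of an ordered simplex on a set.
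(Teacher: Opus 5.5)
\textbf{There is a genuine gap in the comparison step, and it cannot be closed on an arbitrary site.}

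Your check that (\ref{eq-d}) squares to zero is fine on any site. Faces of an increasing tuple are increasing, so $\pi\colon\widetilde{C}^\bullet\to C^\bullet$ is a surjective chain map. The problem is the comparison: for an arbitrary site the claim is false, and the obstacle you flagged is fatal rather than technical.

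First, $\iota$ is not a chain map. In degree $0$ we have $\iota=\id$, and at a repeated tuple
\[
d(\iota s)_{ii}=F(\pr_2)(s_i)-F(\pr_1)(s_i)\in F(U_i\times_U U_i),
\]
while $\iota(ds)_{ii}=0$ by construction. The two projections $U_i\times_U U_i\rightrightarrows U_i$ coincide when $U_i\to U$ is a monomorphism, but not in general.

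Second, the two complexes can have different cohomology. For a one-map covering $\{V\to U\}$, the ordered complex is $F(V)$ concentrated in degree $0$. The full complex has $H^0=\ker\bigl(F(V)\rightrightarrows F(V\times_U V)\bigr)=F(U)$. A concrete instance lives in the paper's own \'{e}tale site: take the covering $H\to G$ of Fig.~\ref{fig:etale} and let $F$ be the sheaf of $\bk$-valued functions constant on connected components. Then $F(H)=\bk^2\neq\bk=F(G)$.

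Working only with codegeneracies does not rescue this. The normalized complex is indeed homotopy equivalent to $\widetilde{C}^\bullet$ on any site. However, it only imposes $F(\mathrm{diagonal})(s_{\dots ii\dots})=0$, not $s_{\dots ii\dots}=0$, so it is not $C^\bullet$.

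Your fallback breaks at the same point. At a fixed $W\to U$, the ordered subnerve is the free abelian group on $\coprod_{i_0<\dots<i_n}\prod_k \Hom_U(W,U_{i_k})$. These are the chains on a join of discrete sets. That join is an ordered simplex only if each $\Hom_U(W,U_i)$ has at most one element. With a single index and two distinct lifts, it already has nonzero reduced $H_0$.

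\textbf{What is true, and how to fix it.} The paper offers no argument beyond citing Stacks 01FG, which covers the topological case. It also only uses ordered complexes for inclusions: subgraphs, and open sets of $A(P_G)$. So the correct statement needs the hypothesis that every $U_i\to U$ is a monomorphism.

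Under that hypothesis, $U_{i_0}\times_U\cdots\times_U U_{i_n}$ depends only on the set $\{i_0,\dots,i_n\}$: the diagonal $U_i\to U_i\times_U U_i$ is an isomorphism and $\pr_1=\pr_2$. Consequently:
\begin{itemize}
\item your $\iota$ becomes a chain map and the Stacks homotopy goes through;
\item your fallback works as stated, since each $\Hom_U(W,U_i)$ is empty or a point, and you obtain the full simplex and the ordered simplex on $\{i : W\to U \text{ factors through } U_i\}$.
\end{itemize}
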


\begin{definition} \label{def:bisheafcplx}
Let $(\cC,\cT)$ be a Grothendieck site, $\overline{F}$, $\underline{F}$ a sheaf and a cosheaf.
We define a \textit{bisheaf complex} with respect to a finite well-ordered covering
$\cU(U)=\lbrace U_i\rightarrow U\rbrace_{i\in I}$ of an element $U$ of $\cC$, denoted as
$(\overline{F},\underline{F},\cU(U))$, to be the datum of the two maps:
$$
\begin{array}{c}
\overline{f}_{i_1<i_2<...<i_k}:\underline{F}(U_{i_1}\times U_{i_2}\times ... \times U_{i_k})
\rightarrow\overline{F}(U_{i_1}\times U_{i_2}\times ... \times U_{i_k}) \\ \\
\underline{f}_{i_1<i_2<...<i_k}:\overline{F}(U_{i_1}\times U_{i_2}\times ... \times U_{i_k})\rightarrow\underline{F}(U_{i_1}\times U_{i_2}\times ... \times U_{i_k})$, $i_1,...,i_k\in I
\end{array}
$$
\end{definition}
Notice that the introduction of a finite well-ordering on the coverings
is an immaterial choice, 
to avoid cumbersome notations, when looking at (co)homology.

Given a bisheaf complex $(\overline{F},\underline{F},\cU(U))$ we have the following maps:
$$
\begin{array}{c}
\overline{d}_n:C^n(\cU(U),\overline{F})\rightarrow C^{n+1}(\cU(U),\overline{F}) \\ \\
\underline{d}_n:C_{n+1}(\cU(U),\underline{F})\rightarrow C_n(\cU(U),\underline{F})
\end{array}
$$
where the first map is exactly the one in (\ref{eq-d}) and the second one is the analogue for the \v{C}ech cocomplex of $\underline{F}$. We also have the morphisms:

$$
\begin{array}{c}
\overline{F}_n(\cU(U)):=\oplus_{{i_1<i_2<...<i_k}}\, \overline{f}_{{i_1<i_2<...<i_k}}:C_n(\cU(U),\underline{F})\rightarrow C^n(\cU(U),\overline{F}) \\ \\
\underline{F}_n(\cU(U)):=\oplus_{i_1<i_2<...<i_k}\,
\underline{f}_{{i_1<i_2<...<i_k}}:C^n(\cU(U),\overline{F})\rightarrow C_n(\cU(U),\underline{F})
\end{array}
$$

\begin{definition}
Let $(\cC,\cT)$ and $(\overline{F},\underline{F},\cU(U))$ be a Grothendieck site and
a bisheaf complex respectively. We define the \textit{upper and lower Laplacians}
as the morphisms $\overline{\Delta}_n$, $\underline{\Delta}_n:$
$C^n(\cU(U),\overline{F})\rightarrow C^n(\cU(U),\overline{F})$:
$$
\begin{array}{c}
\overline{\Delta}_n(\overline{F},\underline{F}, \cU(U)):=
\overline{F}_n(\cU(U))\circ\underline{d}_{n}\circ \underline{F}_{n+1}(\cU(U))\circ\overline{d}_n
\\ \\ 
\underline{\Delta}_n(\overline{F},\underline{F}, \cU(U)):=\overline{d}_{n-1}\circ\overline{F}_{n-1}(\cU(U))\circ\underline{d}_{n-1}\circ \underline{F}_{n}(\cU(U))

\end{array}
$$
Finally, we define the Laplacians of the bisheaf complex to be
the morphisms
\begin{equation}\label{la-def}
\Delta_n:=\underline{\Delta}_n-\overline{\Delta}_n
\end{equation}
\end{definition}

In the context of many applications (for example when considering sheaves on Grothendieck topologies associated to digraphs) we are interested to consider a less general notion of the one of a bisheaf complex with respect to a finite well-ordered covering. Because of its usefulness in the sequel, we introduce it explicitly.

\begin{definition}\label{def:laplacianpairs}
Let $(\cC,\cT)$ be a Grothendieck site, $\overline{F}$, $\underline{F}$ a sheaf and a cosheaf as above. We define a \textit{weak Laplacian pair} with respect to a finite well-ordered covering $\cU(U)=\lbrace U_i\rightarrow U\rbrace_{i\in I}$, of an element $U$ of $\cC$, denoted as $(\overline{F},\underline{F},\cU(U))$, to be the datum of maps $f_{i}:\underline{F}(U_i)\rightarrow\overline{F}(U_i)$ and $f_{i<j}:\overline{F}(U_i\times_U U_j)\rightarrow\underline{F}(U_i\times_U U_j)$.
We call \textit{Laplacian pair} a weak Laplacian pair where the maps $f_i$ and $f_{i<j}$ are all isomorphisms.
We call \textit{strong Laplacian pair} 
a weak Laplacian pair with $\overline{F}(U_i)=\underline{F}(U_i)$,
$\overline{F}(U_i\times_U U_j)=\underline{F}(U_i\times_U U_j)$ and
the maps $f_i$ and $f_{i<j}$ are all identities.
\end{definition}

As in the case of bisheaves, given a weak Laplacian pair $(\overline{F},\underline{F},\cU(U))$
the following morphisms are well defined:
\begin{equation}\label{d-lapl-p}
\overline{d}:C^0(\cU(U),\overline{F})\rightarrow C^1(\cU(U),\overline{F}),
\qquad
\underline{d}:C_1(\cU(U),\underline{F})\rightarrow C_0(\cU(U),\underline{F})
\end{equation}
coming from the \v{C}ech (co)complexes and
\begin{equation} \label{F-lapl-p}
\begin{array}{c}
F_0(\cU(U)):=\oplus_if_{i} :C_0(\cU(U),\underline{F})\rightarrow C^0(\cU(U),\overline{F})\\ \\
F_1(\cU(U)):=\oplus_if_{i<j}:
C^1(\cU(U),\overline{F})\rightarrow C_1(\cU(U),\underline{F})
\end{array}
\end{equation}

\begin{definition}
    Let $(\cC,\cT)$ and $(\overline{F},\underline{F},\cU(U))$ be a Grothendieck site and a weak Laplacian pair respectively. We define the \textit{\v{C}ech Laplacian} as the morphism $$\Delta(\overline{F},\underline{F}, \cU(U)):=F_0(\cU(U))\circ\underline{d}\circ F_1(\cU(U))\circ\overline{d}:C^0(\cU(U),\overline{F})\rightarrow C^0(\cU(U),\overline{F})$$
\end{definition}

\section{Sheaves on graphs}\label{sec:shgraph}
We now specialize the previous treatment to the case of graphs. 

\subsection{Grothendieck sites on graphs}
We are now ready to define the Grothendieck topologies for directed graphs we are interested in.
To this end, we need to introduce some special morphisms between graphs.

\begin{definition}
Let be $f:G\rightarrow H$ a morphism between directed graphs. We say that $f$ is
\begin{itemize}
\item[1)] an \emph{embedding} if both $f_0:G_0\rightarrow H_0$ and $f_1:G_1\rightarrow H_1$ are injective ($G$ is a subgraph of $H$ via $f$).
\item[2)] a \emph{covering map} (\emph{\'{e}tale map}) if  for each $v\in G_0$, $f$ induces bijections
(injective maps) $h_G^{-1}(v)\cong h_H^{-1}(f(v))$ and $t_G^{-1}(v)\cong t_H^{-1}(f(v))$ \cite{FrTop}, \cite{stallings}.
\end{itemize}
 We say that a morphism $f$ between undirected graphs in $\Graphs$
is an \textit{embedding} (\'{e}tale map, covering map)
if the morphism $\Theta(f)$ obtained using the functor of Proposition \ref{prop-graphffemb}
is an embedding (\'{e}tale map, covering map).
\end{definition}

These morphisms, whose geometric meaning resembles the ones of the analogue morphisms defined in algebraic
topology or algebraic geometry, enjoy the following properties.

\begin{proposition}\label{prop-embetprop}
The following hold:
\begin{itemize}
\item[1)] covering maps, \'{e}tale maps and embeddings are stable under base change,
\item[2)] any \'{e}tale morphism $f:G\rightarrow H$ factors as $f=\pi\circ\iota:G\xrightarrow{\iota} M\xrightarrow{\pi} H$ where $\iota$ is an embedding and $\pi$ is a covering map.
\end{itemize}
\end{proposition}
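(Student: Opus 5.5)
For part 1), I will use that limits in $\diGraphs$ are computed pointwise, so the fibre product $G'=G\times_H H'$ of $f:G\to H$ along $g:H'\to H$ has $G'_0=G_0\times_{H_0}H'_0$ and $G'_1=G_1\times_{H_1}H'_1$, with head and tail taken componentwise. For embeddings the claim follows at once, since a base change of an injective map of sets is injective and this applies in each degree. For étale and covering maps I fix a vertex $(v,w)\in G'_0$ with $f(v)=g(w)=u$ and compare $h_{G'}^{-1}(v,w)$ with $h_{H'}^{-1}(w)$. An edge of $G'$ with head $(v,w)$ is a pair $(a,b)$ with $f(a)=g(b)$, $h_G(a)=v$ and $h_{H'}(b)=w$. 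Hence $h_{G'}^{-1}(v,w)\cong h_G^{-1}(v)\times_{h_H^{-1}(u)}h_{H'}^{-1}(w)$, and under this identification the projection to $h_{H'}^{-1}(w)$ is the base change of the map $h_G^{-1}(v)\to h_H^{-1}(u)$. Injectivity and bijectivity are both preserved by base change of sets, and the tail sets are handled in the same way.

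For part 2), given an étale $f:G\to H$, I will build $M$ by adding to $G$ free edges that complete the missing stars. For each $v\in G_0$, let $A_v=h_H^{-1}(f(v))\setminus f(h_G^{-1}(v))$ be the incoming edges at $f(v)$ not hit from $v$, and let $B_v$ be the analogous set of outgoing edges. Each missing edge must be attached to $M$ with its other endpoint somewhere. Attaching it to existing vertices of $G$ risks breaking the bijections at those vertices, so I will add new vertices instead. Each added vertex then has missing stars of its own, and completing those produces the iteration that makes up the main construction.

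Concretely, I plan to define $M$ as the graph whose vertices are $G_0$ together with all finite non-backtracking ``paths leaving $G$'' in $H$. This is the usual Stallings completion: glue infinite trees to $G$, which are pieces of the universal cover of $H$ at $f(v)$, along the missing edges. It is allowed here because graphs may be infinite. Alternatively, I will construct $M_0=G$ and let $M_{k+1}$ be obtained from $M_k$ by adding, for each vertex $x$ and each missing edge $e\in h_H^{-1}(\pi(x))$ (resp.\ $t_H^{-1}$), a new vertex $x_e$ over $t_H(e)$ (resp.\ $h_H(e)$) and a new edge $x_e\to x$ mapping to $e$. Each new vertex $x_e$ then has exactly one edge in its star, over $e$, so its star maps injectively, and at the next stage it is completed. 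Finally I set $M=\mathrm{colim}\, M_k$. The maps $\iota:G\to M$ and $\pi:M\to H$ come from the evident inclusions and labels.

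The main obstacle will be verifying that $\pi$ is a covering map, and in particular that every vertex of $M$ is completed exactly once, at the stage following its creation, without ever acquiring a second preimage of the same edge. I will check this inductively. The invariant is that at each stage every vertex's star maps injectively (so $M_k\to H$ is étale), and that vertices of $M_{k-1}$ have bijective stars in $M_k$. Since a colimit of this chain of embeddings is pointwise a union, these two properties pass to $M$, giving bijective stars everywhere. It remains to note that $\iota$ is injective on vertices and edges by construction, and that $\pi\circ\iota=f$.
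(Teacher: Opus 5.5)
Your proposal is correct. It differs from the paper mainly because the paper gives no argument of its own. Part 1) is called ``a simple check'', and your identification $h_{G'}^{-1}(v,w)\cong h_G^{-1}(v)\times_{h_H^{-1}(u)}h_{H'}^{-1}(w)$ is exactly that check, done correctly. Part 2) is delegated to Friedman's Lemma 2.9 and to Stallings.

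The classical argument behind those references, going back to Stallings, runs as follows. One pads the fibres of $f$ over the vertices of $H$ to a common cardinality by adding isolated vertices. By étaleness, the lifts of each edge $e\colon x\to y$ of $H$ form a partial injection $f^{-1}(x)\rightharpoonup f^{-1}(y)$. One extends each such partial injection to a bijection by adding edges.

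Your construction instead attaches free trees: at each stage, one new vertex and one new edge for each missing incoming or outgoing edge. Your invariant is that stars are injective at every stage, and that a vertex's stars become bijective and stay frozen from the stage after it is created. This invariant does pass to the union $M=\bigcup_k M_k$.

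What your route buys is uniformity. It needs no fibre-cardinality bookkeeping, and it works verbatim for infinite digraphs, which this paper explicitly allows. What it gives up is finiteness. Even when $G$ and $H$ are finite, your $M$ is infinite as soon as some star of $G$ is incomplete. The Stallings completion instead yields a finite cover in that case, and finite covers are what Friedman's applications need. The statement here does not ask for finiteness, so this is not a gap.

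One small bookkeeping point: suppose $e$ is a self-loop of $H$ at $\pi(x)$ that is missing at $x$ both as an incoming and as an outgoing edge. Then the two new vertices you create for $e$ must be distinct, so $x_e$ should carry an in/out label.
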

\begin{proof}
The proof of 1) is a simple check, while the proof of 2) can be found as Lemma 2.9 in \cite{FrTop}, see also
\cite{stallings}.
\end{proof}

\begin{definition}\label{Subgraphtop}
We can define the following Grothendieck topologies and sites on graphs
and semisimplicial sets.

\noindent
1) \textit{Standard topology on digraphs.} Let $G$ be a digraph, i.e. $G\in\diGraphs$. Consider the category $\open(G)$ having as objects the subgraphs of $G$ and as arrows the inclusion morphisms. For every object $U$ of $\open(G)$ we define a collection of covers $\Cov(U)$ by declaring that a family of arrows $\mathcal{U}=\lbrace f_i:U_i\rightarrow U\rbrace_{i\in I}$ in $\open(G)$ is a covering if $\bigcup_{i\in I}f_i(U_i)=U$. As the fibre product of two subgraphs of $G$ is still a subgraph of $G$, this datum defines a Grothendieck site $(\open(G), \cT_G)$. One can check that $\open(G)_{\gross}(\cT_G)$ consists of graphs of the form:\\

\begin{center}
\begin{tikzpicture}[scale=.45]
\node (a1) at (-2.5,0) {$\bullet$};
\node at (-2.5,-0.7) {$v$};
 
\node (v) at (2.5,0) {$\bullet$};
\node (w) at (6,0) {$\bullet$};
\node at (2.5,-0.7) {$v$};
\node at (6,-0.7) {$w$};
\draw [->] (v) to (w);
\end{tikzpicture}
\end{center}
We then have isomorphisms of categories $\open(G)_{\gross}(\cT_G)\cong
\Gamma(G)_+\cong P_G$ (see Obs. \ref{digraph-poset}).
Because of Theorem \ref{GrossBasisThm} we then have that 

\begin{equation}\label{eq:stdtopgross}
\Pre(P_G)\cong\Pre(\open(G)_{\gross}(\cT_G))\simeq\Sh(\open(G),\cT_G)
\end{equation}

\noindent
2) {\textit{Standard topology on undirected graphs.}}
For an undirected graph $G$, i.e. $G\in\Graphs$ (as in \cite{godsil})
we can define a Grothendieck site $(\open(G), \cT_G)$
similarly: the objects in $\open(G)$ are subgraphs of $G$, while the arrows the inclusion morphisms.
The coverings are defined in the same way, we leave to the reader the tedious checks.
Note that in this case, the functor $\Theta: \Graphs\rightarrow\diGraphs$ of Proposition
\ref{prop-graphffemb} induces a continuous functor
$\Theta_{|\open(\Theta(G)}^\ast\open(G)\rightarrow \open(\Theta(G))$.
We have $\Pre(\cP_{G})\cong \Pre(\Gamma(G)_+)\simeq\Sh(\open(G),\cT_G)$
by Obs. \ref{graph-poset}.

\noindent
3)\textit{Standard topology on semisimplicial sets.}
Let $\open(X)$ be the category having as objects sub-semisimplicial sets of  a semisimplicial set $X$ and as arrows the inclusions. Coverings are defined similarly to the case of digraphs as jointly surjective collections of inclusion morphisms (i.e. $\cup f_i(U_i)=U$, where $f_i:U_i \lra U$ are the inclusion morphisms). Finally, one can check that the gross objects in this case are the objects of $\open(X)$ that are isomorphic to some $\Delta^n_+$.
When the simplicial set $X$ is a digraph, the resulting site is the one defined in 1).

\noindent
4) \textit{\'Etale topology on digraphs.}
  Let be $G$ a digraph, we define the \'etale Grothendieck topology
$(\Et(G), \cT_{et}(G))$ as follows.
We define the category $\Et(G)$ having as objects all the graphs $H$, such that there exists an \'etale
morphism $H\rightarrow G$, and as morphisms \'etale morphisms. As before, for every object $U$ of $\Et(G)$ we define a collection of covers $\Cov(U)$ by declaring that a family of arrows $\mathcal{U}=\lbrace f_i:U_i\rightarrow U\rbrace_{i\in I}$ is a covering if $\bigcup_{i\in I}f_i(U_i)=U$.
It follows from Proposition \ref{prop-embetprop} that this assignment defines a Grothendieck topology on $\Et(G)$ that we denote as $\cT_{et}(G)$. The inclusion functor $\open(G)\subseteq\Et(G)$ induces a continuous functor $(\open(G),\cT(G))\rightarrow (\Et(G),\cT_{et}(G))$. If $G$ is a standard undirected graph,
the functor $\Theta$ of Proposition \ref{prop-graphffemb}
induces continuous functors $(\open(G),\cT(G))\rightarrow (\open(\Theta(G)),\cT(\Theta(G)))$ and $(\Et(G),\cT_{et}(G))\rightarrow (\Et(\Theta(G)),\cT_{et}(\Theta(G)))$.
\end{definition}

Notice that the definitions above can be generalized to define
``big'' sites as in the case of algebraic geometry (see \cite{Vis}).

\begin{figure}[b]
\begin{center}
\begin{tikzpicture}[scale=.45]
\node (G) at (-12.5,-2) {$G$};
\node (a) at (-9.5,-2) {$\bullet$};
\node (b) at (-6,-2) {$\bullet$};
\node (c) at (-2.5,-2) {$\bullet$};
\node at (-9.5,-2.7) {$u$};
\node at (-6,-2.7) {$v$};
\node at (-2.5,-2.7) {$w$};
\draw [->] (a) to [out=30,in=150] (b);
\draw [->] (b) to [out=-150,in=-30] (a);
\draw [->] (a) to (b);
\draw [->] (b) to (c);

\node (H) at (-12.5,2) {$H$};
\node (a1) at (-9.5,2) {$\bullet$};
\node (b1) at (-6,2) {$\bullet$};
\node (c1) at (-2.5,2) {$\bullet$};
\node at (-9.5,1.3) {$u'$};
\node at (-6,1.3) {$v'$};
\node at (-2.5,1.3) {$w'$};
\draw [->] (a1) to (b1);
\draw [->] (b1) to (c1);
\draw [->] (a1) to [out=30,in=150] (b1);
 
\node (a2) at (2.0,2) {$\bullet$};
\node (b2) at (5.5,2) {$\bullet$};
\node at (2.0,1.3) {$u''$};
\node at (5.5,1.3) {$v''$};
\draw [->] (b2) to (a2);
\draw [->] (-12.5,1) to (-12.5,-1);
\end{tikzpicture}
\end{center}
\caption{
\'Etale non standard cover of $G$ by the disconnected digraph $H$.}
\label{fig:etale}
\end{figure}
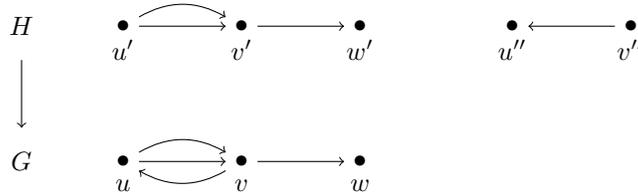

\begin{observation} Let $G \in \diGraphs$.
Every standard cover is also \'etale, however the viceversa is not true
(see Fig. \ref{fig:etale} displaying an \'etale cover which is not a standard one).
Also, any $G \in \diGraphs$ admits an \'etale cover (which may not be standard), consisting
of just one map $\{f:H\lra G\}$, where $H$ can be chosen in $\diGraphss$, as we see, again, in Fig. \ref{fig:etale}.
where $f_0:H_0 \lra G_0$, $f_0(u')=f_0(u'')=u$, $f_0(v')=f_0(v'')=v$, $f_0(w')=w$ and similarly $f_1$.
\end{observation}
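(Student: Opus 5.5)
The observation makes two claims, and I will treat them separately: the comparison between standard and étale covers, and the existence of a one-map étale cover $\{f:H\lra G\}$ with $H\in\diGraphss$.

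\emph{Standard covers are étale.} A standard cover $\{f_i:U_i\rightarrow U\}$ in $\open(G)$ consists of subgraph inclusions $f_i$, so both $(f_i)_0$ and $(f_i)_1$ are injective. For $v\in (U_i)_0$ the maps $h_{U_i}^{-1}(v)\rightarrow h_U^{-1}(f_i(v))$ and $t_{U_i}^{-1}(v)\rightarrow t_U^{-1}(f_i(v))$ are restrictions of $(f_i)_1$, hence injective. So each $f_i$ is étale. The covering condition $\bigcup_i f_i(U_i)=U$ is literally the same in both topologies, so the family is an étale cover; this is the content of the continuity of $\open(G)\subseteq\Et(G)$ stated in Definition \ref{Subgraphtop}.

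\emph{The converse fails.} I would check the map in Fig. \ref{fig:etale} directly. The vertex map $f_0$ identifies $u'$ and $u''$, so $f$ is not an embedding, and $\{f\}$ is not a family of arrows in $\open(G)$ at all. To see that it is étale, list at each vertex of $H$ the incoming and outgoing edges and verify that $f_1$ sends them to distinct edges at the image vertex; for instance, at $u'$ the two outgoing edges go to the two distinct parallel edges $u\to v$. Surjectivity is immediate: every vertex of $G$ and each of its four edges is hit. The only real work in this part is this bookkeeping, making sure the two parallel edges $u\to v$ of $H$ map to distinct edges of $G$.

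\emph{A cover by a single graph in $\diGraphss$.} Rather than reproducing the figure in general, I would use a canonical construction. Let $H$ be the disjoint union, which is a coproduct computed pointwise, of the following pieces:
\begin{itemize}
\item one gross subgraph $\{t_G(e)\to h_G(e)\}$ for each edge $e\in G_1$ (a vertex with a loop if $e$ is a self-loop);
\item one isolated vertex for each $v\in G_0$.
\end{itemize}
Let $f:H\lra G$ be induced by the inclusions of these pieces. Then $f$ is jointly surjective, since every edge and every vertex of $G$ is hit. It is étale because each vertex of $H$ has at most one incoming and at most one outgoing edge, so injectivity on $h^{-1}(v)$ and $t^{-1}(v)$ is automatic. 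Finally, $H\in\diGraphss$ because each connected component of $H$ has at most one edge, so the map $i:H_1\rightarrow H_0\times H_0$ is injective.

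The construction works for infinite $G$ as well, since coproducts exist. The only point needing care is the self-loop case, where the loop piece is a single vertex with one edge; it still has injective $i$ and local injectivity.
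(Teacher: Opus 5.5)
Your proof is correct. For the first two claims it matches the paper, which only notes the continuity of $\open(G)\subseteq\Et(G)$ in Definition~\ref{Subgraphtop} and points to Fig.~\ref{fig:etale}.

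For the last claim your route is genuinely different. The paper offers nothing but the figure. You instead build $H$ uniformly as the coproduct of the gross standard cover (one copy of each vertex, and one copy of each edge with its endpoints) and take the induced single map $H\to G$.

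This is more than a stylistic choice. The $H$ drawn in Fig.~\ref{fig:etale} has two parallel edges $u'\to v'$, so its map $i:E_H\to V_H\times V_H$ is not injective and $H\notin\diGraphss$. The figure therefore exhibits a non-standard \'etale cover, but it does not show that $H$ can be taken in $\diGraphss$. You should say this explicitly rather than silently bypassing the figure.

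Your construction makes every check automatic: in- and out-degree at most one gives \'etaleness, and one edge per component gives injectivity of $i$. It also handles infinite graphs, multi-edges and loops. More generally, it shows that any standard cover by objects of $\diGraphss$ collapses to a one-map \'etale cover $\coprod_i U_i\to G$.

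The figure's approach would instead keep more of the connectivity of $G$, e.g.\ the path $u'\to v'\to w'$. It could be repaired by moving one of the two parallel edges into its own component. For proving the statement, though, your totally disconnected $H$ is the cleaner choice.
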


\subsection{Sheaves on digraphs}\label{Sect:sheavesdigraphs}
We are now ready to explore the notion of sheaf arising in the standard topology on digraphs, as defined in item 1 of Definition \ref{Subgraphtop}. From now on, until the end of this subsection, we will assume that all the presheaves and sheaves we consider are valued in the category $\Vect_\mathbb{K}$ of finitely generated vector spaces on a field of characteristic different from 2 unless we say otherwise.
Many of our considerations and examples will hold also in the context of (pre)sheaves having values in the categories listed in \ref{SetstoVect}.

\begin{observation}\label{sheaves:digraph}
Recall that, because of Theorem \ref{GrossBasisThm},
a sheaf on a digraph $G=(E_G,V_G,h_G,t_G)$ for the standard topology is equivalent to a 
presheaf $F\in\Pre(\open(G)_{\gross}(\cT_G))$. This is the datum of:
\begin{itemize}
\item a vector space $F(v)$ for each
vertex $v\in V_G$,
\item a vector space $F(e)$ for each edge (with its endpoints) $e\in E_G$,
\item linear maps (restriction maps)
$F_{h_G(e)\leq e}: F(e)\rightarrow F(h_G(e))$,
  $F_{t_G(e)\leq e}:F(e)\rightarrow F(t_G(e))$ for each edge $e\in E_G$,
where, we write $v\leq e$ to mean
that $v$ is a vertex of the edge $e$ (note that we only have one map $F(e)\to F(v)$ if $e$ is a self loop based at $v$).
\end{itemize}

Indeed, the gross objects for $\open(G)$ are precisely
the vertices and edges of $G$. This is exactly the definition given in \cite{FrTop}.
A similar observation holds for a sheaf on a graph $G\in\Graphs$, i.e. in the topology
as in Def. \ref{Subgraphtop}; the gross objects of such topology are the vertices and the edges
(with their endpoints) of $G$. Hence, given a presheaf $F \in  \Pre(\open(G)_{\gross}(\cT_G))$
we denote the restriction maps as $F_{v\leq e}: F(e)\rightarrow F(v)$.
\end{observation}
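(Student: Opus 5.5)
The plan is to verify the hypothesis of Theorem \ref{GrossBasisThm} for the site $(\open(G),\cT_G)$ of Definition \ref{Subgraphtop}(1), identify $\open(G)_{\gross}(\cT_G)$ explicitly, and then unpack what a presheaf on this small category is. By Remark \ref{SetstoVect}, Theorem \ref{GrossBasisThm} applies verbatim to sheaves valued in $\Vect_\bk$, so it suffices to work at the level of sites.

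\textbf{Step 1: the gross objects.} For a vertex $v\in V_G$, let $\{v\}$ be the subgraph with one vertex and no edges. For an edge $e\in E_G$, let $\langle e\rangle$ be the subgraph with edge set $\{e\}$ and vertex set $\{h_G(e),t_G(e)\}$; this is a single vertex if $e$ is a self-loop. I would show these are gross.
\begin{itemize}
\item Let $\{U_i\to\langle e\rangle\}$ be any covering. Joint surjectivity forces some $U_j$ to contain the edge $e$. Since $U_j$ is a subgraph, it also contains both endpoints of $e$. Hence $U_j=\langle e\rangle$, and the inclusion $U_j\to\langle e\rangle$ is the identity.
\item The same argument works for $\{v\}$.
\end{itemize}
Conversely, let $U$ be any other subgraph. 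Then $U$ contains at least two elements that are maximal for the incidence order: two edges, two vertices not joined by an edge of $U$, or an edge together with a vertex that is not one of its endpoints. The family $\{\{v\}\to U\}_{v\in V_U}\cup\{\langle e\rangle\to U\}_{e\in E_U}$ is then a covering of $U$ by proper subgraphs. This covering is not gross, because in $\open(G)$ an inclusion is an isomorphism only if it is the identity. The empty subgraph needs a separate remark: it is covered by the empty family, which is not gross. The same family also shows that every object of $\open(G)$ is covered by gross objects, which is exactly the hypothesis of Theorem \ref{GrossBasisThm}.

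\textbf{Step 2: morphisms among gross objects.} Since $\open(G)$ is a poset under inclusion, the full subcategory on gross objects is a poset. Its only non-identity relations are $\{v\}\subseteq\langle e\rangle$ for $v\in\{h_G(e),t_G(e)\}$. If $e$ is a self-loop at $v$, the two endpoint maps coincide and give a single arrow. This recovers the isomorphism $\open(G)_{\gross}(\cT_G)\cong P_G$ of (\ref{eq:stdtopgross}).

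\textbf{Step 3: unpacking.} Theorem \ref{GrossBasisThm} now gives $\Sh(\open(G),\cT_G)\simeq\Pre(P_G)$. A contravariant functor on $P_G$ valued in $\Vect_\bk$ is precisely the following data:
\begin{itemize}
\item a space $F(v)$ for each vertex $v$;
\item a space $F(e)$ for each edge $e$;
\item a linear map $F_{v\leq e}\colon F(e)\to F(v)$ for each relation $v\leq e$.
\end{itemize}
This amounts to two maps per ordinary edge and one per self-loop. No compatibility conditions arise, since the poset has no composable chains of length two beyond identities. The undirected case is handled identically, using $\cP_G=P_{\Theta(G)}$ from Observation \ref{graph-poset}.

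The main obstacle I expect is the careful treatment of degenerate cases in Step 1. The argument must ensure that the self-loop $\langle e\rangle$, which has a single vertex, is still distinguished from $\{v\}$ and is gross. It must also confirm that the empty subgraph and subgraphs such as an edge plus an isolated vertex are correctly excluded from the gross objects.
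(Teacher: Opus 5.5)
Your proposal is correct and follows the paper's route. You identify $\open(G)_{\gross}(\cT_G)$ as the single vertices and the closed single edges, note that this full subcategory of a poset is $P_G$, and apply Theorem \ref{GrossBasisThm}. In doing so you actually carry out the check the paper leaves as ``one can check'', including the empty subgraph and the self-loop case. You are also right to work with the thin category $P_G$ rather than $\Gamma_+(G)$, which has two arrows $v\to e$ when $e$ is a self-loop at $v$.

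The one slip is in your undirected remark. The identity $\cP_G=P_{\Theta(G)}$ quoted from Observation \ref{graph-poset} fails in general, because $\Theta$ doubles every non-loop edge; the paper itself notes this in Example \ref{graph-alex} and Observation \ref{obs:moveshvs}. For $G\in\Graphs$ you should instead rerun Steps 1--3 directly on the undirected site, which gives $\cP_G$ as the poset of gross objects.
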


Finally, recall that in the previous section, we defined the functor $\Theta:\stGraphs\rightarrow\diGraphs$ that gives us a way to see a simple undirected graph as a directed graph via its image under $\Theta$.
In the next observation we explore the relation between the sheaves
on the site $(\open(G),\cT_{G})$ and those on $(\open(\Theta(G)),\cT_{\Theta(G))})$.
\begin{observation}\label{obs:moveshvs}
Let $G$ be a graph in $\stGraphs$.
\noindent
1) The functor $\Theta$ defined in Section \ref{sec:undgr} is continuous,
  therefore, given any sheaf $F$ on $(\open(\Theta(G)),\cT_{\Theta(G))})$, we
can pull it back, functorially, to a sheaf $\Theta^p(F)$ on $(\open(G),\cT_{G})$.
More precisely, for any subgraph $H$ in $G$, $\Theta^p(F)(H):= F(\Theta(H))$.
\noindent
2) The functor $\Theta$ is fully faithful (cfr. Theorem \ref{prop-graphffemb}), however, it is not true in general that for a given $G\in\stGraphs$, $\Sh(\open(G),\cT_G)$ is equivalent to the category $\Sh(\open(\Theta(G)),\cT_{\Theta(G))})$ via $\Theta_{|\open(G)}^p$ as the next example shows.
The sheaves on the undirected graph $G\in \stGraphs$ in Fig. \ref{fig-g1} are simpler than sheaves on the digraph $H$:
indeed
$\Sh(\open(G),\cT_G)$ is not equivalent to $\Sh(\open(\Theta(G)),\cT_{\Theta(G))})$. For example, note that,
while $H=\Theta(G)$ is not gross in $(\open(\Theta(G)),\cT_{\Theta(G}))$,
    $G$ is indeed gross in $(\open(G),\cT_G)$.
    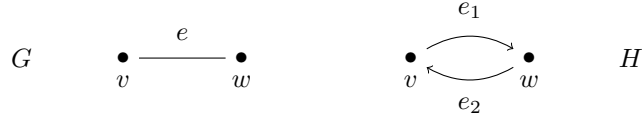
\begin{figure}[h]
    \begin{center}
    \begin{tikzpicture}[scale=.45]
    \node (G) at (-9,0) {$G$};
    \node (v) at (-6,0) {$\bullet$};
    \node (w) at (-2.5,0) {$\bullet$};
    \node at (-6,-0.7) {$v$};
    \node at (-2.5,-0.7) {$w$};
    \node at (-4.25,0.7) {$e$};
    \draw [-] (v) to (w);

    \node (H) at (9,0) {$H$};
    \node (v) at (2.5,0) {$\bullet$};
    \node (w) at (6,0) {$\bullet$};
    \node at (2.5,-0.7) {$v$};
    \node at (6,-0.7) {$w$};
    \node at (4.25,1.4) {$e_1$};
    \node at (4.25,-1.4) {$e_2$};
    \draw [->] (v) to [out=30,in=150] (w);
    \draw [->] (w) to [out=-150,in=-30] (v);
    \end{tikzpicture}
    \end{center}
    \caption{The undirected graph $G$ and its image via $\Theta$}\label{fig-g1}
\end{figure}
\end{observation}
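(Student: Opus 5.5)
The plan is to verify the three assertions of the observation in turn: continuity of $\Theta$, grossness of $G$ versus non-grossness of $H$, and non-equivalence of the two sheaf categories. For the last one I will reduce to presheaves on gross objects via Theorem \ref{GrossBasisThm} and compare an invariant that any equivalence of categories preserves.

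\textbf{Continuity.} I check the two conditions of the definition of continuous functor for $\Theta_{|\open(G)}:\open(G)\to\open(\Theta(G))$.

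First, $\Theta$ sends a subgraph $U\subseteq G$ to the subgraph $\Theta(U)\subseteq\Theta(G)$. An undirected edge of $U$ produces exactly its two oriented edges in $\Theta(U)$, and a loop produces one. Hence, if $\bigcup_i U_i=U$, every vertex and every oriented edge of $\Theta(U)$ already lies in some $\Theta(U_i)$. So jointly surjective families go to jointly surjective families.

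Second, fibre products in both sites are intersections of subgraphs. Since $G$ is simple, an edge of $\Theta(G)$ determines its underlying undirected edge uniquely. It follows that $\Theta(Y\cap U_i)=\Theta(Y)\cap\Theta(U_i)$, so the comparison map is an isomorphism. By \cite[Lemma 00WW]{SP}, $F\mapsto F\circ\Theta=\Theta^p(F)$ is then a well defined functor on sheaves, with $\Theta^p(F)(H)=F(\Theta(H))$.

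\textbf{Grossness.} In the example of Fig. \ref{fig-g1}, the proper subgraphs of $G$ are exactly those that miss the edge $e$. A jointly surjective family must cover $e$, so it must contain $G$ itself, and therefore every covering of $G$ is gross.

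For $H$, the two proper subgraphs $\{v,w,e_1\}$ and $\{v,w,e_2\}$ already form a non-gross covering. So $H$ is not gross.

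\textbf{Non-equivalence.} By Theorem \ref{GrossBasisThm} and (\ref{eq:stdtopgross}), we have $\Sh(\open(G),\cT_G)\simeq\Pre(\cP_G)$ with $\cP_G=\{v,w<e\}$, and $\Sh(\open(H),\cT_H)\simeq\Pre(P_H)$ with $P_H=\{v,w<e_1,e_2\}$. Any equivalence of categories preserves the terminal object and the poset of its subobjects. In a presheaf topos on a poset $P$, subterminal objects correspond to down-closed subsets of $P$.

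Counting down-sets gives the following:
\begin{itemize}
\item $\cP_G$ has $5$ down-sets: $\emptyset$, $\{v\}$, $\{w\}$, $\{v,w\}$, $\cP_G$.
\item $P_H$ has $7$ down-sets: the four above, together with $\{v,w,e_1\}$, $\{v,w,e_2\}$ and $P_H$.
\end{itemize}
Hence the two categories are not equivalent, and a fortiori $\Theta^p$ is not an equivalence.

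For $\Vect_\mathbb{K}$-valued sheaves the same conclusion holds by a parallel argument. The indecomposable projective objects of $\Pre(P,\Vect_\mathbb{K})$ are the linearised representables $\mathbb{K}[\Hom(-,p)]$, one for each $p\in P$. Equivalences preserve indecomposable projectives, so they would have to match $3$ isomorphism classes with $4$, which is impossible.

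The main obstacle is making this last step rigorous in the chosen coefficient category, namely identifying an invariant that is visibly preserved by equivalences. Counting subterminals handles the $\Sets$-valued case cleanly, and counting indecomposable projectives handles the linear case. Everything else is routine bookkeeping with subgraphs.
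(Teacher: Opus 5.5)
Your proof is correct, and it proves the central claim by a different route from the paper. On continuity and on grossness you agree with the paper. The paper only asserts continuity, so your check of the two conditions in the definition of a continuous functor supplies the missing verification. Incidentally, simplicity of $G$ is not needed for $\Theta(Y\cap U_i)=\Theta(Y)\cap\Theta(U_i)$, since every oriented edge of $\Theta(G)$ is labelled by its undirected edge by construction.

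For the non-equivalence, the paper's only justification is that $G$ is gross in $(\open(G),\cT_G)$ while $\Theta(G)$ is not gross in its own site. That is a property of the site presentations, not yet of the sheaf categories. To turn it into a proof one must translate it into a categorical invariant, e.g.\ (in the $\Sets$-valued setting) ``the terminal sheaf is projective''. On $G$ the terminal presheaf is the representable $\Hom(-,e)$ at the top element, so global sections is evaluation at $e$ and is exact. On $\Theta(G)$, the epimorphism $\Hom(-,e_1)\sqcup\Hom(-,e_2)\to 1$ has no section.

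You instead count subterminal objects ($5$ versus $7$ down-sets) for $\Sets$-valued sheaves. For $\Vect_\K$-valued sheaves you count isomorphism classes of indecomposable projectives ($3$ versus $4$). This is a cheap, fully rigorous certificate that covers both coefficient categories and proves the stronger statement that no equivalence exists at all.

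The paper's grossness remark is incomplete as a proof but more conceptual. It isolates the phenomenon that the subsequent discussion of $\Theta_p$ and $\hat\Theta$ is built on: global sections on $G$ are just $F(e)$, while on $\Theta(G)$ they are a fibre product of $F(e_1)$ and $F(e_2)$.
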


Now, let us assume we have a sheaf $F$ on $G\in\stGraphs$, with respect to the standard topology.
It is natural to ask whether we can define a sheaf $F'$ on $\Theta(G)$ so that $\Theta^p(F')$
is isomorphic to $F$, thus showing that $\Theta^p$ is essentially surjective
Moreover,
we could also ask if $\Theta^p$ is full with a section i.e. we have a
faithful functor $\hat\Theta$ such that $\Theta^p\circ\hat\Theta$ is
isomorphic to the identity functor
$\mathrm{Id}: \Sh(\open(G), \cT_G)\rightarrow \Sh(\open(G), \cT_G)$.
Furthermore, we also ask
if there is a faithful embedding
$\Sh(\open(G), \cT_G) \rightarrow \Sh((\open(\Theta(G)), \cT_{\Theta(G)})$.
This last property would allow us to study sheaves on
$G$ by identifying them with certain sheaves on $\Theta(G)$.
As we shall
see in our next observation, the natural functor (adjoint of $\Theta^p$)
does not have the required properties, however, with a small modification,
we can define a functor $\hat \Theta$ achieving what is stated above.

\begin{observation}\label{obs:moveshvs2}
1) The functor $\Theta^p$ has a left adjoint $\Theta_p$ (see \cite[\href{https://stacks.math.columbia.edu/tag/00VE}{Lemma 00VE}]{SP}), thus one could use the latter to define a sheaf $\Theta_p(F)$ on $\Theta(G)$, however the unit $\eta: 1_{\Sh(\open(G),\cT_G)} \rightarrow \Theta^p\Theta_p$ is not an isomorphism, in general.
Indeed, consider again the graph $\alertb{G}$ depicted in Fig. \ref{fig-g1}. As we already know, to give a sheaf on $G$ it suffices to give a presheaf on its gross objects. Thus we have three vector spaces $F(v), F(w), F(e)$ and two maps $F_{v\leq e}, F_{w\leq e}$. The sheaf $\Theta_p(F)$ is the datum of the vector spaces $\Theta_p(F)(e_1):=F(e)$, $\Theta_p(F)(e_2):=F(e)$, $\Theta_p(F)(v):=F(v)$, $\Theta_p(F)(w):=F(w)$ and maps $\Theta_p(F)_{v\leq e_1}=\Theta_p(F)_{v\leq e_2}:=F_{v\leq e}$, $\Theta_p(F)_{w\leq e_1}=\Theta_p(F)_{w\leq e_2}:=F_{w\leq e}$. We leave as a check to the reader that in fact the vector space of global sections $\Theta_p(F)(\Theta(G))$, is equal to $F(e)$ if only if $F_{v\leq e}$ and $F_{w\leq e}$ are injective.
Therefore, unless we are in this last hypothesis, we do not get
that $\Theta^p\Theta_p(F)\cong F$.
This statement is actually more general.
Given $G\in\stGraphs$, we take the covering $G=\bigcup G_i$ by its
subgraphs formed by a single edge and its endpoints, which are
gross objects for the standard topology. Then, given a sheaf $F$ on $G$,
$\Theta^p\Theta_p(F)\cong F$ if and only if 
the maps $F_{v\leq e}, F_{w\leq e}$ are injective for each $G_i$,
vertices $v$ and $w$ and edges $e$.
\noindent
2)
As we are considering sheaves valued in $\Vectk$, we want to define a functor $\hat\Theta$ from $\Sh(\open(G),\cT_{G})$ to $\Sh(\open(\Theta(G)),\cT_{\Theta(G))})$ such that, for a given sheaf $F$ on $G$, $\Theta^p\hat\Theta(F)\cong F$. We show how to define it on the graph in Fig. \ref{fig-g1}. For a general $G\in\stGraphs$, general construction follows using the same reasoning we used at the end of item 1 in this observation, {provided we make a choice of a distinguished edge for every pair of edges in $\Theta(G)$ connecting two distinct vertices}. Let $\gamma: F(e)\rightarrow F(v)\oplus F(w)$ be the morphism defined by $x \mapsto (F_{v\leq e}(x), F_{w\leq e}(x))$ and set $\hat\Theta(F)(e_1):=F(e)$, $\hat\Theta(F)(e_2)=F(e)/\ker \gamma$, $\hat\Theta(F)(v):=F(v)$, $\hat\Theta(F)(w)=F(w)$. As for the restriction maps, notice that both $F_{v\leq e}$ and $F_{w\leq e}$ are defined on the quotient $F(e)/\ker \gamma$ by construction. Thus we set $\hat\Theta(F)_{v\leq e_1}=\hat\Theta(F)_{v\leq e_2}:=F_{v\leq e}$, $\hat\Theta(F)_{w\leq e_1}=\hat\Theta(F)_{w\leq e_2}:=F_{w\leq e}$ {(the definition for self loops is clear)}.
With this definition, we have $\hat\Theta(F)(\Theta(G))\cong F(e)$, by the very definition
  of sheaf, and therefore $\Theta^p\hat\Theta(F)\cong F$. Moreover the functor $\hat\Theta$
  is faithful and satisfies $\Theta^p\circ\hat\Theta\cong \mathrm{Id}$, where
  $\mathrm{Id}: \Sh(\open(G), \cT_G)\lra \Sh(\open(G), \cT_G)$.
\end{observation}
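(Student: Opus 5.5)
The plan is to reduce everything, via Theorem \ref{GrossBasisThm}, to statements about presheaves on gross objects. A sheaf on $(\open(G),\cT_G)$ is determined, up to natural isomorphism, by its values and restriction maps on vertices and on edges with their endpoints (Observation \ref{sheaves:digraph}). So to prove $\Theta^p\hat\Theta(F)\cong F$ it suffices to produce isomorphisms $\Theta^p\hat\Theta(F)(x)\cong F(x)$ for every gross $x$ of $G$, compatible with the restriction maps $F_{v\leq e}$ and natural in $F$.

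First I would treat the graph of Fig.~\ref{fig-g1}. For a vertex, $\Theta(\{v\})=\{v\}$, so $\Theta^p\hat\Theta(F)(v)=F(v)$ by definition. For the edge, $\Theta^p\hat\Theta(F)(G)=\hat\Theta(F)(\Theta(G))$. I would compute this by the sheaf axiom applied to the covering of $H=\Theta(G)$ by its two gross edge-subgraphs $e_1,e_2$, whose fibre product is the discrete graph $\{v,w\}$. Global sections are then pairs $(x,[y])\in F(e)\times F(e)/\ker\gamma$ with $F_{v\leq e}(x)=F_{v\leq e}(y)$ and $F_{w\leq e}(x)=F_{w\leq e}(y)$, i.e.\ $\gamma(x)=\gamma(y)$. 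Since $\gamma$ descends to an injective map $\bar\gamma$ on $F(e)/\ker\gamma$, this forces $[y]=[x]$. Hence $x\mapsto(x,[x])$ is an isomorphism $F(e)\cong\hat\Theta(F)(\Theta(G))$. Under this isomorphism, restriction to $v$ sends $x$ to $F_{v\leq e}(x)$, and likewise for $w$, so the restriction maps agree.

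Next I would address functoriality and faithfulness. A morphism $\varphi:F\to F'$ satisfies $\gamma'\circ\varphi_e=(\varphi_v\oplus\varphi_w)\circ\gamma$, so $\varphi_e(\ker\gamma)\subseteq\ker\gamma'$. Thus $\varphi_e$ induces a map on the quotients, and I would define $\hat\Theta(\varphi)$ using $\varphi_v,\varphi_w,\varphi_e$ and this induced map. Functoriality is then clear. The isomorphisms $x\mapsto(x,[x])$ commute with $\varphi$, which gives naturality of $\Theta^p\circ\hat\Theta\cong\mathrm{Id}$. Faithfulness follows because $\hat\Theta(\varphi)$ contains $\varphi_v,\varphi_w,\varphi_e$ verbatim on $v,w,e_1$; alternatively, it already follows from $\Theta^p\hat\Theta\cong\mathrm{Id}$.

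For general $G\in\stGraphs$, I would first fix, for each edge between distinct vertices, a distinguished one of its two lifts in $\Theta(G)$. I would then define $\hat\Theta(F)$ edge by edge as above on gross objects, with self loops sent identically. The computation for each gross edge-subgraph $G_i$ is local: $\Theta(G_i)$ is exactly the two-edge graph just treated. Since both sides are sheaves agreeing naturally on gross objects, Theorem \ref{GrossBasisThm} extends the isomorphism to all of $\open(G)$.

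I expect the main obstacle to be bookkeeping rather than ideas. One has to check that $\Theta^p$ really evaluates on gross objects of $G$ via the (non-gross) subgraphs $\Theta(G_i)$. One also has to make sure that the isomorphism obtained from the sheaf condition is natural and independent of the choice of distinguished edge, up to canonical isomorphism.
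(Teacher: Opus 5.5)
Your argument for item 2) is correct and is the paper's own reasoning made explicit. You use the sheaf condition for the cover of $\Theta(G)$ by $e_1,e_2$ (which meet in the discrete graph $\{v,w\}$), the injectivity of the map $\bar\gamma$ induced on $F(e)/\ker\gamma$, and the gross-object equivalence of Theorem~\ref{GrossBasisThm} to pass to an arbitrary $G\in\stGraphs$. Your definition of $\hat\Theta$ on morphisms via $\varphi_e(\ker\gamma)\subseteq\ker\gamma'$, and the deduction of faithfulness from $\Theta^p\hat\Theta\cong\mathrm{Id}$, fill in what the paper leaves implicit. You list independence from the choice of distinguished edge as an obstacle, but it is not part of the statement, and it is false for $\hat\Theta(F)$ itself. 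The two choices put $F(e)$, respectively $F(e)/\ker\gamma$, on $e_1$, so they give non-isomorphic sheaves on $\Theta(G)$ whenever $\ker\gamma\neq 0$; only their images under $\Theta^p$ agree.

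The genuine gap is that item 1) is not addressed at all. You need two things.

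(a) The stated description of $\Theta_p(F)$ on gross objects. For $V=e_1,e_2$ (resp.\ $V=\{v\}$), take the smallest subgraph $U\subseteq G$ with $V\subseteq\Theta(U)$, namely the closed edge $G_i$ (resp.\ $\{v\}$). It is initial in the index category $\mathcal{I}_V$ of Lemma 00VE of the Stacks Project, so the colimit over $\mathcal{I}_V^{op}$ is $F(e)$ (resp.\ $F(v)$). Sheafification does not alter values on gross objects, because every covering of a gross object contains an isomorphism.

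(b) The analysis of the unit. Here the computation you already did for $\hat\Theta$ gives
\[
\Theta^p\Theta_p(F)(G_i)=\Theta_p(F)(\Theta(G_i))=\{(x,y)\in F(e)\oplus F(e)\;:\;\gamma(x)=\gamma(y)\}\cong F(e)\oplus\ker\gamma ,
\]
with $\eta_{G_i}$ the diagonal $x\mapsto(x,x)$. Hence $\eta$ is an isomorphism if and only if $\ker F_{v\leq e}\cap\ker F_{w\leq e}=0$ for every edge $e$. By a dimension count, the same condition characterizes when $\Theta^p\Theta_p(F)\cong F$ holds at all.

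This condition is strictly weaker than injectivity of both $F_{v\leq e}$ and $F_{w\leq e}$, which is what the statement asserts. Take $F(e)=\bk^2$ and $F(v)=F(w)=\bk$, with the two coordinate projections as restriction maps. Neither map is injective, yet $\gamma$ is an isomorphism and so is $\eta$. So only the ``if'' half of the criterion in item 1) can be proved as worded. The correct condition is injectivity of $\gamma$. This is also why $\hat\Theta$ quotients by exactly $\ker\gamma$, and why it coincides with $\Theta_p$ when $\ker\gamma=0$.
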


Finally, note that given an oriented undirected graph
we have an equivalence
of categories between sheaves on it with respect to the standard topology and the category of
sheaves on it with respect to the standard topology when considered as a directed graph by
using its orientation.
In the next sections we will establish a link between our treatment
and the sheaves on graphs as in \cite{FrTop, HG19}
and in the context of graph neural networks in \cite{bodnar}.

\subsection{Graphs, finite topological spaces and preordered sets}
\label{sec-preorder}
A topological space is called \textit{finite} if it consists of a finite number of points.
For a given finite space $X$ and a point $p\in X$, we define
$$
U_p:=\textnormal{ smallest open subset of }X\textnormal{ containing }p
$$
$$
C_p:=\overline{p}=\textnormal{ smallest closed subset of }X\textnormal{ containing }p
$$
Now, given a finite topological space $X$, we can define the structure of a (finite)
preorder on $X$ by setting $p\leq q$ if and only if
$U_p\supseteq U_q$ (equivalently, $p\in \overline{q}$).
Conversely, given a {(not necessarily finite)} preorder $\leq$ on a set $P$, 
we can see $P$ as a topological space,
whose topology is generated by the basis consisting of the following open sets:
$$
U_p:=\lbrace q\in P\: |\: q\geq p\rbrace\qquad p\in P
$$
This topology is called the \textit{Alexandrov topology} \cite{alex}.

\begin{notation}\label{notation:alexandrovtop}
Whenever necessary, to avoid confusion,
we shall use a different notation for a \textit{preorder} $P$ (i.e. a preordered set)
and the topological space $A(P)$ associated with it, as above.
Similarly, for a finite topogical space $X$ we will write $P(X)$ for
the preorder associated with it as above.
We note explicitly that if $X$ is a finite topological space and
$P(X)$ is the associated preoder, then $X=A(P(X))$. We denote by
$\mathrm{Open}(A(P))$ the category of the
open sets of a preorder $P$ viewed as topological space.
$(\mathrm{Open}(A(P)), \cT(A(P)))$ (coverings with arrows given by inclusions) is a site:
sheaves for this site are the usual sheaves on the topological space $A(P)$.
\end{notation}

We have the following result, see \cite{salas} for a clear review of these statements.

\begin{theorem}\label{Thm:alextoppos}
The following statements hold:
\begin{itemize}
\item[1)] The above construction defines an equivalence of categories between finite
topological spaces $\mathrm{FTop}$ and finite preordered sets $\mathrm{PreSets}$
given by the functors:
$$
P: \mathrm{FTop}\lra \mathrm{PreSets}, \qquad A:\mathrm{PreSets}\lra \mathrm{FTop}
$$

\item[2)] A finite topological space $X$ is $T_0$ (i.e. different points have different closures)
if and only if the preorder relation $\leq$ induced by the topology is antisymmetric
i.e. $X$ is a poset.
\end{itemize}
\end{theorem}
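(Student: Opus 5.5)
For part 1), I will construct the two functors explicitly and then check the natural isomorphisms $A\circ P\cong \mathrm{id}$ and $P\circ A\cong\mathrm{id}$.

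On objects, $P(X)$ is the preorder $p\leq q \iff U_p\supseteq U_q$ given above. Reflexivity and transitivity are immediate from the properties of inclusion. Finiteness of $X$ guarantees that $U_p$, the intersection of the finitely many open sets containing $p$, is open. For a preorder $P$, the sets $U_p=\{q\geq p\}$ cover $P$. If $r\in U_p\cap U_q$, then by transitivity $U_r\subseteq U_p\cap U_q$. So these sets form a basis, and $A(P)$ is a well-defined topological space.

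On morphisms, I will show that a map $f$ of finite spaces is continuous if and only if it is order preserving. Suppose $f$ is continuous and $p\leq q$, i.e. $q\in U_p$. Every open set containing $f(p)$ pulls back to an open set containing $p$, hence containing $q$. Therefore $f(q)\in U_{f(p)}$, i.e. $f(p)\leq f(q)$. Conversely, suppose $f$ is monotone and $V$ is open. If $p\in f^{-1}(V)$ and $q\geq p$, then $f(q)\geq f(p)$, so $f(q)\in U_{f(p)}\subseteq V$. Hence $f^{-1}(V)$ is a union of basic sets $U_p$, and is therefore open. This makes $P$ and $A$ functors acting as the identity on underlying maps.

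Next I will check the compositions. The preorder of $A(P)$ recovers $\leq$, since the smallest open set containing $p$ in $A(P)$ is exactly $\{q\geq p\}$. For a finite space $X$, every open $V$ equals $\bigcup_{p\in V}U_p$, so the $U_p$ form a basis of $X$. The Alexandrov sets built from $P(X)$ are exactly these $U_p$, which gives $X=A(P(X))$, as noted in Notation \ref{notation:alexandrovtop}. Since both functors are the identity on underlying sets and maps, these equalities are natural, and they yield an isomorphism of categories, in particular an equivalence.

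For part 2), I use that $p\leq q$ is equivalent to $p\in\overline{q}$. This holds because $p\notin\overline{q}$ exactly when some open set contains $p$ but not $q$, i.e. when $q\notin U_p$. Antisymmetry says that $p\leq q$ and $q\leq p$ imply $p=q$, which is the statement that $\overline{p}=\overline{q}$ implies $p=q$. That is precisely the $T_0$ condition.

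The only step requiring real care is the one where finiteness is used: the openness of $U_p$ as a finite intersection of open sets. This fails for infinite spaces, and it is the reason the equivalence is stated for $\mathrm{FTop}$ only.
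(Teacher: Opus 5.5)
Your proof is correct and complete. The paper gives no proof of its own: it states the result as known and refers to \cite{salas} for a review. What you wrote is the standard elementary verification that the citation replaces. It rests on three facts: a map of finite spaces is continuous exactly when it is monotone; the minimal open sets $U_p$ form a basis; and $p\leq q \Leftrightarrow q\in U_p \Leftrightarrow p\in\overline{q}$.

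One small ordering issue. To get functoriality of $A$ from your lemma, namely that a monotone map $P\to Q$ is continuous $A(P)\to A(Q)$, you need $P(A(P))=P$, which you only establish afterwards. You can avoid the forward reference by noting directly that the open sets of $A(P)$ are exactly the up-sets, and that preimages of up-sets under monotone maps are up-sets. This same observation also justifies your claim that $\{q\geq p\}$ is the smallest open set of $A(P)$ containing $p$.

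Your closing remark is accurate. The same constructions identify arbitrary preorders with Alexandrov spaces (spaces in which arbitrary intersections of open sets are open). Finiteness is used only to guarantee that every finite space is of this kind, and that $A$ of a finite preorder is finite.
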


\begin{remark}\label{rmk:irred}
Notice that, for a given finite topological space $X$, the irreducible open and
closed sets are $U_p$ and $C_p$, $p\in X$.
The same holds true if $X$ is a topological space of the form $A(P)$
for some (not necessarily finite) preorder $P$.
\end{remark}

\begin{observation}
If $X$ is a semisimplicial set, we have $P_X$ as in Obs. \ref{digraph-poset},
hence 
$A(P_X)$ is defined as above. We are interested
in the case $A(P_G)$,
$G\in\diGraphs$.
\end{observation}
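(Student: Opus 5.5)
The content of the Observation is that the Alexandrov construction of Section \ref{sec-preorder} applies to $P_X$. So the plan is to check that $P_X$ of Observation \ref{digraph-poset} is a preordered set, and in fact a poset. The topology $A(P_X)$ is then defined by the basis $U_p=\lbrace q\in P_X \,|\, q\geq p\rbrace$. No finiteness is needed, since that definition works for an arbitrary preorder. I will then make the space explicit for $X=G\in\diGraphs$.

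\textbf{Step 1: $\leq$ is a preorder.} Recall that $x\leq y$ iff $\Hom_{\Gamma_+(X)}(x,y)\neq\emptyset$.
\begin{itemize}
\item Reflexivity holds because the identity $\Delta^n_+\to\Delta^n_+$ makes the triangle (\ref{arr-sim-eq}) commute.
\item Transitivity holds by pasting two commutative triangles of the form (\ref{arr-sim-eq}): the composite $\Delta^n_+\to\Delta^m_+\to\Delta^k_+$ again commutes over $X$.
\end{itemize}

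\textbf{Step 2: antisymmetry.} By Yoneda, a morphism $a\to b$ with $a\in X_n$, $b\in X_m$ is an injective order-preserving map $[n]\to[m]$. Hence $x\leq y$ forces $n\leq m$. If also $y\leq x$, then $n=m$, so the map is the identity of $[n]$, and commutativity of the triangle gives $a=b$. Thus $P_X$ is a poset, consistent with Remark \ref{cat-poset}. By Theorem \ref{Thm:alextoppos}(2), $A(P_X)$ is $T_0$ whenever it is finite; in general $T_0$ follows directly, because $U_p=U_q$ implies $p\leq q\leq p$.

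\textbf{Step 3: the case of a digraph.} For $G\in\diGraphs$, $P_G$ has as elements the vertices and the edges, with $v\leq e$ iff $v\in\lbrace h_G(e),t_G(e)\rbrace$. The basic opens are therefore:
\begin{itemize}
\item $U_e=\lbrace e\rbrace$ for an edge $e$;
\item $U_v=\lbrace v\rbrace\cup h_G^{-1}(v)\cup t_G^{-1}(v)$ for a vertex $v$.
\end{itemize}
By Remark \ref{rmk:irred}, these are exactly the irreducible (gross) opens. This matches the gross objects of $(\open(G),\cT_G)$ via the isomorphism $\open(G)_{\gross}\cong P_G$ in Definition \ref{Subgraphtop}.

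None of these steps is a real obstacle. The only point needing care is antisymmetry, where one must use that arrows in $\Delta_+$ are injective.
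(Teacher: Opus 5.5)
Your proposal is correct and matches the paper's implicit reasoning. The paper gives no argument here: it takes $P_X$ to be a poset from Observation \ref{digraph-poset} and applies the Alexandrov construction of Section \ref{sec-preorder}, which is explicitly stated for arbitrary, not necessarily finite, preorders, so your Steps 1--2 supply the routine verification the paper leaves unstated. One caution on Step 3: since $p\leq q$ gives $U_q\subseteq U_p$, the gross opens of $A(P_G)$ form $P_G^{op}$, so they match the gross objects $\open(G)_{\gross}\cong P_G$ of the standard site only with arrows reversed, which is why the paper has $\Pre(P_G)$ on one side and $\Pre(P_G^{op})$ on the other in Theorem \ref{thm:dualityvect}.
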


\begin{example}\label{graph-alex}
Let $G=(V,E)$ be in $\Graphs$ (note that in general we do not
have $E\not\subset V \times V$). In Observation \ref{graph-poset} we have defined a poset $\mathcal{P}_G$ associated to $G$. This is the poset having as elements the vertices and the edges of $G$  and such that $x\leq y$ if and only if $x=y$ or $x$ is a vertex of the edge $y$.
The topology of the space $A(\cP_G)$ is then generated by the following basis of open sets
(see Figure \ref{fig:openstar}):
\begin{itemize}
    \item $U_v=\lbrace e\in E\: |\: v\leq e\rbrace\cup\{v\}$, that is the open star of $v$, for each vertex $v\in V$,
\item $U_e=\lbrace e\rbrace$, i.e. the edge $e$, without its vertices, for each $e\in E$.
\end{itemize}

For $G\in\stGraphs$, we can either consider the topological space
$A(\mathcal{P}_G)$ or the one associated to its image through $\Theta$, $A(P_{\Theta(G)})$,
which are in general different.
There is a continuous open map $\phi: A(P_{\Theta(G)})\rightarrow A(\mathcal{P}_G)$
naturally induced by $\Theta$,
defined as follows.
For any vertex $v$ of $\Theta(G)$, $\phi(v)=v$. 
For every edge $v\rightarrow w$ in $\Theta(G)$, by construction,
there is a single edge $e$
in $G$: set $\phi(v\rightarrow w)=\phi(w\rightarrow v)= e$.

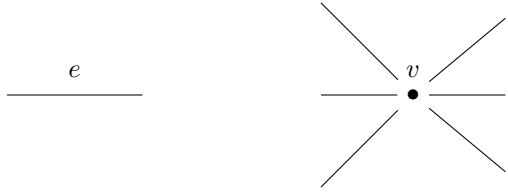
\begin{figure}[h]
\begin{center}
\begin{tikzpicture}[scale=.45]

\draw [-] (-6,0) -- (-2,0);
\node (edge) at (-4,0.7) {$e$};

\node (v) at (6,0) {$\bullet$};
\node (vertex) at (6,0.7) {$v$};
\node (a) at (3,-3) {};
\node (b) at (9,-2.5) {};
\node (c) at (3,3) {};
\node (d) at (9,2.5) {};
\node (e) at (9,0) {};
\node (f) at (3,0) {};
\draw [-] (a) -- (v);
\draw [-] (b) -- (v);
\draw [-] (c) -- (v);
\draw [-] (d) -- (v);
\draw [-] (e) -- (v);
\draw [-] (f) -- (v);
\end{tikzpicture}
\end{center}
\caption{Irreducible open sets for the topology in Ex. \ref{graph-alex}.}\label{fig:openstar}
\end{figure}
\end{example}

\begin{definition}
Given a finite topological space $X$, the \textit{dual space} $\hat{X}$
is the finite topological space
with the same underlying set as $X$, but with the opposite topology
(the open subsets are the closed subsets of $X$).
\end{definition}

\begin{example}
Let $G\in\diGraphs$ and consider the topological space $A(P_G)$.
Any vertex $v$ is a closed set, and 
edges with end vertices are also closed sets.
Moreover, if $A(P_G)$ is finite, i.e. $V_G$, $E_G$ finite,
its dual topology gives rise to a finite topological space whose associated Grothendieck site is
isomorphic to the one in Def. \ref{Subgraphtop}.
\end{example}

\begin{definition}
The \textit{dimension} of a finite topological space $X$ is defined as as the maximum of the lengths of the irreducible closed subsets of $X$, that is, the maximum of the length of the chains of points $p_0<p_1\dots<p_n$ where $<$ is the partial
order relation on $X$ induced by its topology ($a<b$ if $a\leq b$ and $a\neq b$).
\end{definition}

Finally, recall that every preordered set $(P,\leq)$ can be considered as a category having as objects the elements of $P$ and as arrows $x\rightarrow y$ if $x\leq y$.

\subsection{Sheaves on graphs as preordered sets}\label{section:sheavesposet}
In the previous section, given a graph $G\in\diGraphs$ we
associate to it a topological space via the Alexandrov topology, i.e.
via a preorder. This allows us to define sheaves on directed graphs and, more generally,
on any preorder $P$. We shall now describe the links and differences between such sheaves and those
relative to the sites defined in Sec. \ref{Sect:sheavesdigraphs}. 

\begin{observation}\label{Alextop}
For a given preorder $P$, we can consider the site $(\open(A(P)),$ $\cT_{A(P)})$,
where the gross 
objects are the open sets are $U_p$, $p\in P$.  These are the irreducible sets for
the Alexandrov topology, as we remarked above. In addition,
we notice that $p\leq q$ implies that $U_q\subseteq U_p$. As the sets $U_p$ are a base for
the Alexandrov topology, we then obtain using
Theorem \ref{GrossBasisThm} or Corollary \ref{cor-simple}:
\begin{equation}\label{eq:posetsheavesopen}
\Sh(\open(A(P)),\cT_{A(P)})\simeq\Pre(P^{op})=\Fun(P,\Sets)
\end{equation}
that is, to assign a sheaf on a preorder, it is enough to specify a presheaf on the open $U_p$'s.
This result was also obtained in Sec. 4 in \cite{curry} using different techniques and
holds also by replacing the category $\Sets$ with any category {listed in \ref{SetstoVect}}. 

As a special case, if $G$ is in $\diGraphs$, we can build the Grothendieck site $(\open(A(P_G)),\cT_{A(P_{G}))})$. We recall that the gross objects of $(\open(A(P_G)),\cT_{A(P_{G}))})$ are the edges $\{e\}=U_e$ and the \textit{open stars} $U_v$ associated with vertices $p\in V_G$ that is the union of the vertex $p$ and all the edges connected to it (see Fig. \ref{fig:openstar}).
\end{observation}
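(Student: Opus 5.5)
The plan is to reduce \eqref{eq:posetsheavesopen} to Theorem \ref{GrossBasisThm}. This needs two things: first, that the gross objects of the site $(\open(A(P)),\cT_{A(P)})$ are exactly the basic opens $U_p$; second, that the full subcategory they span is equivalent, as a category, to $P^{op}$.

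\textbf{Step 1: gross objects.} Let $U$ be an open set of $A(P)$. Suppose first that $U=U_p$ for some $p$, and let $\{U_i\to U\}$ be any covering. Some $U_i$ contains $p$. Since $U_i$ is open, hence upward closed, it contains $U_p$, so $U_i=U$ and the covering is gross. Conversely, suppose $U$ is not of the form $U_p$. The family $\{U_p\subseteq U\}_{p\in U}$ covers $U$, because $p\in U_p\subseteq U$ by upward closure. Each member is a proper subset of $U$, so in the poset category $\open(A(P))$ none of these inclusions is an isomorphism, and $U$ is not gross. This is the site-theoretic version of Remark \ref{rmk:irred} and of the irreducibility characterisation recalled after the definition of gross coverings. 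The same covering by the $U_p$ shows that every object admits a covering by gross objects, so the hypothesis of Theorem \ref{GrossBasisThm} holds and we get $\Sh(\open(A(P)),\cT_{A(P)})\simeq\Pre(\open(A(P))_{\gross})$.

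\textbf{Step 2: identifying $\open(A(P))_{\gross}$ with $P^{op}$.} Consider the map $p\mapsto U_p$. By transitivity, $p\le q$ implies $U_q\subseteq U_p$. Conversely, $U_q\subseteq U_p$ gives $q\in U_p$, that is $p\le q$. Both categories are thin, so $p\mapsto U_p$ defines a functor $P^{op}\to\open(A(P))_{\gross}$ which is fully faithful, and it is surjective on objects by Step 1. This is the step where I expect the only genuine subtlety. When $P$ is merely a preorder, $p\le q\le p$ gives $U_p=U_q$, so the functor is not injective on objects. We therefore only obtain an equivalence of categories, not an isomorphism: a preorder is equivalent to its posetal reflection, which is what the $U_p$'s realise. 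This suffices, since equivalent categories have equivalent presheaf categories. Hence $\Pre(\open(A(P))_{\gross})\simeq\Pre(P^{op})=\Fun(P,\Sets)$, which gives \eqref{eq:posetsheavesopen}. Finiteness of $P$ is never used, because in the Alexandrov topology arbitrary intersections of opens are open and the $U_p$ form a basis for any $P$.

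\textbf{Step 3: other value categories and the graph case.} For the categories of Remark \ref{SetstoVect}, the same argument applies verbatim, invoking that remark for Theorem \ref{GrossBasisThm}. For $G\in\diGraphs$, specialise to $P=P_G$. For an edge $e$, $U_e=\{e\}$ because edges are maximal. For a vertex $v$, $U_v=\{v\}\cup\{e:\ v\le e\}$, which is the open star of $v$. By Step 1 these are precisely the gross objects of $(\open(A(P_G)),\cT_{A(P_G)})$.
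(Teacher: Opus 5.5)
Your proof is correct and is essentially the paper's own argument. Like the paper, you identify the gross objects of $(\open(A(P)),\cT_{A(P)})$ with the basic opens $U_p$, note that they form a basis so that every open admits a covering by gross objects, invoke Theorem~\ref{GrossBasisThm}, and use $p\le q \iff U_q\subseteq U_p$ to identify $\open(A(P))_{\gross}$ with $P^{op}$. You also make explicit what the paper leaves implicit: the converse inclusion argument, and the fact that for a non-antisymmetric preorder $p\mapsto U_p$ is only an equivalence rather than an isomorphism, which is consistent with the paper's use of $\simeq$.
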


\begin{observation}\label{bod-sh}
Let $G\in \diGraphs$. By Remark \ref{Alextop}, we see that giving a sheaf on $A(P_G)$ 
is equivalent to give a presheaf $F\in\Pre(P_G^{op})$. This consists of:
\begin{itemize}
    \item an assignment of a vector space $F(U_v)$ for each vertex $v\in V_G$.
    \item  an assignment of a vector space $F(U_e)$ for each edge $e\in E_G$.
\item  an assignment of linear maps
$F_{v\leq e}:F(U_v)\rightarrow F(U_e)$ for each $v\leq e$.
\end{itemize}
An analogous observation holds true if $G\in\Graphs$. 
More is true:
the poset associated to a digraph $G\in\diGraphs$ is isomorphic to the poset associated to the undirected graph obtained from $G$ by forgetting the directions of the arrows. As a consequence, we get that the category of sheaves on a given digraph $G\in\diGraphs$ with respect to the topology given by $P_G$ is equivalent to the category of sheaves on the undirected graph obtained from $G$ by forgetting the directions of the arrows with respect to the topology induced by the poset of this undirected graph. For an undirected graph $G$ this notion of sheaves on graphs is the same as the one introduced by Bodnar et al. in \cite{bodnar, curry, HG19}, employed also in the context of machine learning. To be precise, they build their theory in the context of {\sl cellular sheaves}
(see Section \ref{sec:cellsh}), that are sheaves on posets associated with regular cell complexes.
Indeed,
we can view $G\in \Graphs$ as a cellular cell complex by taking the poset associated
to it. We then obtain the poset $P_G$ as defined in Example \ref{graph-alex}
and therefore we reach the same notion of sheaf as we defined above.
\end{observation}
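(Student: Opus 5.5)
The plan is to read everything off the equivalence \eqref{eq:posetsheavesopen} of Observation \ref{Alextop}, applied with $P=P_G$. By Remark \ref{SetstoVect} that equivalence holds with values in $\Vectk$ as well, so
$$\Sh(\open(A(P_G)),\cT_{A(P_G)})\simeq \Pre(P_G^{op})=\Fun(P_G,\Vectk).$$
I will unpack the right-hand side directly. By Observation \ref{digraph-poset} and Remark \ref{cat-poset}, the objects of $P_G$ are exactly the vertices $v\in V_G$ and the edges $e\in E_G$. The non-identity relations are precisely $v\leq e$ with $v\in\{h_G(e),t_G(e)\}$: these come from the two face maps $d_0,d_1$, and there are no non-trivial relations among vertices or among edges.

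Under the equivalence, a functor $F$ assigns $F(U_v)$ to $v$ and $F(U_e)$ to $e$. A relation $v\leq e$ means $U_e\subseteq U_v$, so it gives the restriction map $F_{v\leq e}:F(U_v)\to F(U_e)$. Functoriality imposes no further conditions, because the poset has no composable chains of length greater than one beyond identities. This yields exactly the three bulleted data of the statement. A self-loop at $v$ produces a single relation $v\leq e$, hence a single map.

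Next I construct an explicit order isomorphism $P_G\cong \cP_{G^u}$, where $G^u\in\Graphs$ denotes $G$ with directions forgotten. Its vertex set is $V_G$, its edge multiset contains one unordered edge $\{h_G(e),t_G(e)\}$ for each $e\in E_G$, and a self-loop stays a loop. The map is the identity on vertices and on edges. It is order-preserving in both directions because in both posets $x\leq y$ holds iff $x=y$ or $x$ is an endpoint of $y$, and being an endpoint does not depend on orientation. This is the same check as in Observation \ref{graph-poset}. Since $\Fun(-,\Vectk)$ takes isomorphic posets to isomorphic functor categories, composing with \eqref{eq:posetsheavesopen} on both sides gives the claimed equivalence between sheaves on $A(P_G)$ and sheaves on $A(\cP_{G^u})$.

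Finally, I view $G^u$ as a one-dimensional regular cell complex, with $0$-cells the vertices and $1$-cells the edges. Its face-incidence poset is by definition $\cP_{G^u}$, so cellular sheaves in the sense of \cite{curry, HG19, bodnar} are exactly objects of $\Fun(\cP_{G^u},\Vectk)$, which is the notion above.

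The only delicate point is bookkeeping rather than a real obstacle. One must make the forgetful construction $G\mapsto G^u$ precise, since $\Theta$ goes in the opposite direction. Multi-edges and loops must be kept as distinct elements, so that the edge sets match bijectively and the poset map is really an isomorphism and not merely a surjection. Stated carefully, each remaining step is immediate from Observation \ref{Alextop} and Observation \ref{graph-poset}.
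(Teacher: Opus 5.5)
Your proposal is correct and follows essentially the same route as the paper: specialize \eqref{eq:posetsheavesopen} to $P=P_G$, unpack a functor on the one-dimensional incidence poset into the three listed data, observe that the relation ``$x=y$ or $x$ is an endpoint of $y$'' does not depend on orientation, and identify the result with the face poset of the underlying one-dimensional cell complex. Your use of the forgetful construction $G\mapsto G^u$ (keeping multi-edges and loops distinct) instead of $\Theta$ is more careful than the paper's appeal to Observation \ref{graph-poset}, since $P_{\Theta(G)}$ doubles each non-loop edge and so is in general not isomorphic to $\cP_G$. One caveat remains, and the paper shares it: a graph with self-loops is not literally a regular cell complex, so for such graphs the match with cellular sheaves holds only at the level of face posets.
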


We now wish to address the questions as in Obs. \ref{obs:moveshvs},
\ref{obs:moveshvs2} in the present setting.

\begin{observation}\label{obs:moveshvalex}
Let be $G\in\stGraphs$.
\noindent 1)
Analogously to what we have seen for the standard topology in Observation \ref{obs:moveshvs}, since the functor $\Theta$ defined in Section \ref{sec:undgr} is continuous, given any sheaf $F$ on $(\open(A(P_{\Theta(G)})),\cT_{A(P_{\Theta(G)})})$, one can get a sheaf $\Theta^p(F)$ on $(\open(A(\mathcal{P}_G)),\cT_{A(\mathcal{P}_{G}))})$. Consider the map $\phi$ we introduced in Example \ref{Alextop}. One could consider the usual constructions of inverse image $\phi^*$ and direct image $\phi_*$ of sheaves via $\phi$. It is easy to verify that, given a sheaf $F$ on $G$, $\phi^*(F)\cong \Theta_p(F)$ and on the other hand, given a sheaf $F'$ on $\Theta(G)$, $\phi_*(F')\cong\Theta^p(F')$.

\noindent 2)
In general, given $G\in\stGraphs$, the
category $\Sh(\open(A(\mathcal{P}_G)),\cT_{A(\mathcal{P}_{G}))})$ is not
equivalent to the category $\Sh(\open(A(P_{\Theta(G)})),\cT_{A(P_{\Theta(G)})})$
via $\Theta_{|\open(A(\mathcal{P}_G))}^p$.

    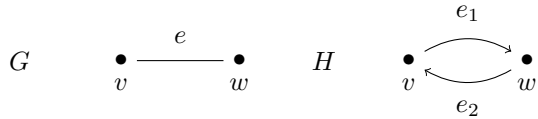
\begin{figure}[h]
    \begin{center}
    \begin{tikzpicture}[scale=.45]
    \node (G) at (-9,0) {$G$};
    \node (v) at (-6,0) {$\bullet$};
    \node (w) at (-2.5,0) {$\bullet$};
    \node at (-6,-0.7) {$v$};
    \node at (-2.5,-0.7) {$w$};
    \node at (-4.25,0.7) {$e$};
    \draw [-] (v) to (w);

    \node (H at (9,0) {$H$};
    \node (v) at (2.5,0) {$\bullet$};
    \node (w) at (6,0) {$\bullet$};
    \node at (2.5,-0.7) {$v$};
    \node at (6,-0.7) {$w$};
    \node at (4.25,1.4) {$e_1$};
    \node at (4.25,-1.4) {$e_2$};
    \draw [->] (v) to [out=30,in=150] (w);
    \draw [->] (w) to [out=-150,in=-30] (v);
    \end{tikzpicture}
    \end{center}
    \caption{The undirected graph $G$ and its image $H=\Theta(G)$ via $\Theta$}\label{fig-g2}
\end{figure}
\end{observation}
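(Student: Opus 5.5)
The plan is to exhibit an explicit counterexample, namely the graph $G$ of Fig. \ref{fig-g2} consisting of a single edge $e$ joining $v$ and $w$, for which $H=\Theta(G)$ has the two edges $e_1\colon v\to w$ and $e_2\colon w\to v$. The strategy is to find two sheaves on $A(P_H)$ that are not isomorphic but have isomorphic images under $\Theta^p$. Since an equivalence of categories reflects isomorphisms, this shows that $\Theta^p$ cannot be an equivalence.

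First I describe both categories concretely using Observation \ref{Alextop} and Observation \ref{bod-sh}.
\begin{itemize}
\item A sheaf on $A(\mathcal{P}_G)$ is a diagram $F(U_v)\to F(U_e)\leftarrow F(U_w)$.
\item A sheaf $F'$ on $A(P_H)$ consists of spaces $F'(U_v)$, $F'(U_w)$, $F'(U_{e_1})$, $F'(U_{e_2})$, together with four maps $F'(U_v)\to F'(U_{e_i})$ and $F'(U_w)\to F'(U_{e_i})$ for $i=1,2$.
\end{itemize}

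Next I compute $\Theta^p$, which by item 1) coincides with $\phi_*$. This uses the open map $\phi$ of Example \ref{graph-alex}. Its preimages are
\[
\phi^{-1}(U_v)=\{v,e_1,e_2\}=U_v,\qquad \phi^{-1}(U_w)=U_w,\qquad \phi^{-1}(U_e)=\{e_1,e_2\}=U_{e_1}\sqcup U_{e_2}.
\]
The last set is a disjoint union of two open sets. The sheaf condition applied to this cover, whose overlap is empty, gives
\[
\Theta^p(F')(U_e)=F'(U_{e_1})\oplus F'(U_{e_2}).
\]
The restriction maps of $\Theta^p(F')$ are the sums of the corresponding restriction maps of $F'$.

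Now I choose two sheaves on $A(P_H)$. Let $F'$ have $F'(U_{e_1})=\K$ and all other stalks equal to zero. Let $F''$ have $F''(U_{e_2})=\K$ and all other stalks equal to zero. All restriction maps are zero in both cases. By the computation above, both $\Theta^p(F')$ and $\Theta^p(F'')$ are isomorphic to the sheaf $0\to\K\leftarrow 0$ on $A(\mathcal{P}_G)$.

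It remains to show that $F'\not\cong F''$. A morphism of sheaves on the fixed space $A(P_H)$ gives, for each basic open set, a linear map, and an isomorphism gives linear isomorphisms componentwise. On $U_{e_1}$ this would require $\K\cong 0$, which is impossible. Hence $\Theta^p$ does not reflect isomorphisms and is not an equivalence.

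The only delicate step is the identification $\Theta^p=\phi_*$ together with the computation $\phi^{-1}(U_e)=U_{e_1}\sqcup U_{e_2}$. The gluing over an empty overlap must be justified carefully, since the sections over the empty set are the zero space. Once that is in place, the rest is bookkeeping.
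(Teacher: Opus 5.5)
Your argument for 2) is correct. It uses the same example as the paper, which offers nothing beyond Fig.~\ref{fig-g2}. By analogy with Obs.~\ref{obs:moveshvs}, the implicit point there is that the irreducible open $U_e$ pulls back to the reducible open $U_{e_1}\sqcup U_{e_2}$. Your two sheaves supported at $e_1$ and at $e_2$ are non-isomorphic but have isomorphic images under $\Theta^p$, and this turns that remark into an actual proof.

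What you leave out is item 1). It is part of the statement, and you invoke it rather than prove it. It is short:
\begin{itemize}
\item The functor on opens induced by $\Theta$ sends an up-set $U\subseteq\mathcal{P}_G$ to the same vertices together with both orientations of each of its edges. That set is exactly $\phi^{-1}(U)$, so $\Theta^p(F')(U)=F'(\phi^{-1}(U))=(\phi_*F')(U)$ by definition.
\item Then $\Theta_p\cong\phi^*$ follows from $\phi^*\dashv\phi_*$ and uniqueness of left adjoints.
\item Alternatively, check it directly: on Alexandrov spaces $\mathcal{G}(U_p)=\mathcal{G}_p$, and $(\phi^*F)_x=F_{\phi(x)}$. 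Together these reproduce the explicit description of $\Theta_p(F)$ given in Obs.~\ref{obs:moveshvalex2}.
\end{itemize}
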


\begin{observation}\label{obs:moveshvalex2}
Now, let us assume we have a sheaf $F$ on $G \in  \Graphs$.
As we have already mentioned, the functor $\Theta^p$ has a left adjoint $\Theta_p$. In this case, in contrast with what happens for the standard topology, the unit 
$$\eta: 1_{\Sh(\open(A(\mathcal{P}_G)),\cT_{A(\mathcal{P}_{G}))})} \rightarrow \Theta^p\Theta_p$$ is indeed an isomorphism. It follows that we can use $\Theta_p$ to define a sheaf $\Theta_p(F)$ on $\Theta(G)$ such that $\Theta^p\Theta_p(F)\cong F$. Here we just show the latter condition.
Indeed, consider again the graph depicted in Fig. \ref{fig-g2}. As we already know, to give a sheaf on $G$ it suffices to give a presheaf on its gross objects. Thus we have three vector spaces $F(U_v), F(U_w), F(U_e)$ and two maps $F_{v\leq e}, F_{w\leq e}$. The sheaf $\Theta_p(F)$ is the datum of the vector spaces $\Theta_p(F)(U_{e_1}):=F(U_e)$, $\Theta_p(F)(U_{e_2}):=F(U_e)$, $\Theta_p(F)(U_v):=F(U_v)$, $\Theta_p(F)(U_w):=F(U_w)$ and maps $\Theta_p(F)_{v\leq e_1}=\Theta_p(F)_{v\leq e_2}:=F_{v\leq e}$, $\Theta_p(F)_{w\leq e_1}=\Theta_p(F)_{w\leq e_2}:=F_{w\leq e}$. We leave as a check to the reader that for this topology, the vector space of global sections $\Theta_p(F)(\Theta(G))$, is always equal to $F(G)$. Therefore we do get that $\Theta^p\Theta_p(F)\cong F$.
Following the same reasoning of Observation \ref{obs:moveshvs2}, the result above can be extended to any graph $G$, if one considers a covering of $G$ by its open stars. This tells us that there is a faithful embedding $\Sh(\open(A(\mathcal{P}_G)),\cT_{A(\mathcal{P}_{G}))})\rightarrow \Sh(\open(A(P_{\Theta(G)})),\cT_{A(P_{\Theta(G)})})$. {As a consequence}, here we are in the best scenario and we can study sheaves on $G$ by identifying them with certain shaves on $\Theta(G)$
(this holds in general for $G\in\stGraphs$).
\end{observation}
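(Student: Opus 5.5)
The plan is to make both functors explicit through the continuous open map $\phi: A(P_{\Theta(G)})\rightarrow A(\mathcal{P}_G)$ of Example \ref{graph-alex}. By Observation \ref{obs:moveshvalex}, $\Theta^p\cong\phi_*$ and $\Theta_p\cong\phi^*$, so the unit $\eta$ becomes the adjunction unit $F\rightarrow\phi_*\phi^*F$.

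First, I would use the equivalence (\ref{eq:posetsheavesopen}) of Observation \ref{Alextop}. It says that a sheaf on an Alexandrov space is determined by its values on the minimal opens $U_p$, and that the stalk at $p$ is $F(U_p)$. Since $\phi^*$ preserves stalks, this gives
\[
(\phi^*F)(U_x)=F(U_{\phi(x)})
\]
for every point $x$ of $A(P_{\Theta(G)})$. This recovers exactly the description of $\Theta_p(F)$ stated in the observation: two copies of $F(U_e)$ over $e_1,e_2$, with the same restriction maps $F_{v\leq e}$ and $F_{w\leq e}$.

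Second, I would compute $\phi_*\phi^*F$ on the basis $\{U_p\}$ of $A(\mathcal{P}_G)$. For a vertex, $\phi^{-1}(U_v)$ is the open star of $v$ in $\Theta(G)$. This open star is irreducible with generic point $v$, so $(\phi_*\phi^*F)(U_v)=F(U_v)$. For global sections, I would cover $\Theta(G)$ by the open stars $U_v$ and $U_w$, whose intersection is $\{e_1\}\sqcup\{e_2\}$. The sheaf condition identifies the global sections with the pairs $(a,b)\in F(U_v)\oplus F(U_w)$ such that $F_{v\leq e}(a)=F_{w\leq e}(b)$. Both copies of the edge impose this same equation. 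This is precisely the equalizer computing $F(G)$ from the cover of $G$ by $U_v$ and $U_w$, so $\eta$ is an isomorphism on global sections. For a general $G\in\stGraphs$, I would run the same computation on the cover by open stars. The only nontrivial overlaps are the preimages of single edges. Because $G$ is simple, each such preimage is a disjoint union of at most two points carrying identical restriction data, so the gluing equations are duplicated but not changed.

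The main obstacle is the open $U_e$. Its preimage $\phi^{-1}(U_e)=\{e_1,e_2\}$ is disconnected, so a literal pushforward gives $F(U_e)\oplus F(U_e)$ rather than $F(U_e)$. I would first try to show that $\Theta^p$ on such opens should be read as the diagonal, that is, as compatible sections, matching the convention that $\Theta^p(F)(H)=F(\Theta(H))$ for subgraphs $H$. Under that reading, $\eta$ restricted to $U_e$ is the diagonal map, which is an isomorphism onto its image. If this reading fails, I would weaken the statement to one that is still enough for the faithful embedding: $\eta$ is an isomorphism on every $U_v$ and on all unions of open stars, and it is injective on every open. Faithfulness of $\Theta_p$ then follows formally, since a map $\alpha$ with $\Theta_p(\alpha)=0$ satisfies $\eta\circ\alpha=\Theta^p\Theta_p(\alpha)\circ\eta=0$, and injectivity of $\eta$ forces $\alpha=0$.
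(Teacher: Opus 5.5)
There is a genuine gap, but it lies in the statement and in the paper's argument, not in your computation: the obstacle you isolate at $U_e$ is real and cannot be removed. The paper itself asserts in Observation \ref{obs:moveshvalex} that $\Theta^p\cong\phi_*$ and $\Theta_p\cong\phi^*$. With these, the component of the unit at $U_e$ is the diagonal $F(U_e)\to F(U_e)\oplus F(U_e)$, since $\phi^{-1}(U_e)=\{e_1\}\sqcup\{e_2\}$ with both points open. Hence $\eta$ is not an isomorphism. For the graph of Fig. \ref{fig-g2} one does not even have an abstract isomorphism $\Theta^p\Theta_p(F)\cong F$ unless $F(U_e)=0$: for $\underline{\bk}$, the sections of $\phi_*\phi^*\underline{\bk}$ over $U_e$ are $\bk^2$.

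The paper's own argument checks only that $\Theta_p(F)(\Theta(G))$ equals $F(G)$, which is exactly the part you verified, and concludes the sheaf isomorphism from it. That inference is imported from Observation \ref{obs:moveshvs2}. There the one-edge graph is a gross object of the standard topology, so its global sections are a value on a gross object. In the Alexandrov topology $G=U_v\cup U_w$ is not gross, so the check has to be made on $U_v$, $U_w$, $U_e$, and it fails on $U_e$.

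So drop your first fallback, for three reasons. For a general sheaf $F'$ on $A(P_{\Theta(G)})$ the spaces $F'(U_{e_1})$ and $F'(U_{e_2})$ are unrelated, so there is no ``diagonal'' subspace of $F'(\{e_1,e_2\})$ with which to redefine $\Theta^p$ functorially. The convention $\Theta^p(F)(H)=F(\Theta(H))$ concerns subgraphs in the standard topology, and $U_e=\{e\}$ is not one. And ``isomorphism onto its image'' only says injectivity.

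Your second fallback is the correct salvage, and it is precisely what the paper's reasoning supports. $\eta$ is an isomorphism on every union of open stars, in particular on global sections. It is a monomorphism because $\phi$ is surjective and $\phi^*$ preserves stalks, so $\Theta_p$ is faithful, which is all the ``faithful embedding'' needs. (At $U_v$, what you actually use is that $U_v$ is the smallest open set containing $v$, so its sections are the stalk at $v$; $v$ is a closed point there, not a generic one.)

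You should also say that nothing stronger holds, because $\Theta_p$ is not full. For example, with $F(U_v)=F(U_w)=0$ and $F(U_e)=\bk$ one has $\mathrm{End}(F)=\bk$ but $\mathrm{End}(\Theta_p F)=\bk^2$. So the closing claim about identifying sheaves on $G$ with certain sheaves on $\Theta(G)$ holds only in this faithful, non-full sense.
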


Now we can establish a relation between sheaves on graphs with the standard Grothendieck topology and those with the Alexandrov topology. {From now until the end of the section we shall assume that all the digraphs we consider are finite and that all the vector spaces we consider are finitely generated.}

Let $G\in\diGraphs$. Consider a sheaf $F\in\Sh(\open(G),\cT_G)$. Let $P_G$ be the preorder on $G$ as in Observation \ref{digraph-poset} and call $F_p\in\Pre(\open(G)_{gross})$ its restriction to the category of gross objects. 

\begin{theorem}\label{thm:dualityvect}
The duality functor $\vee:\Vectk\rightarrow \Vectk,\quad V\mapsto V^\vee=\Hom(V,\bk)$. 
    $\vee$ induces a duality $$(-)^\vee:\Pre(\open(G)_{gross})\simeq\Pre(P_G^{op})$$As a consequence, we have a duality (i.e. an anti-equivalence) $$\Sh(\open(G),\cT_G)\simeq \Sh(\open(A(P_G)),\cT_{A(P_G)})$$
\end{theorem}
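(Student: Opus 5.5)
The plan is to reduce everything to presheaf categories on posets and then use that $\vee$ is an anti-autoequivalence of $\Vectk$.

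First, I will identify $\open(G)_{gross}$ with the poset category $P_G$, as recorded in item 1 of Definition \ref{Subgraphtop}. There, $\open(G)_{gross}(\cT_G)\cong\Gamma_+(G)\cong P_G$, with an arrow $v\to e$ exactly when $v\leq e$. Hence $\Pre(\open(G)_{gross})=\Fun(P_G^{op},\Vectk)$. Concretely, such a presheaf is the datum of Observation \ref{sheaves:digraph}: spaces $F(v)$ and $F(e)$ together with maps $F(e)\to F(v)$ for $v\leq e$. By Observation \ref{bod-sh}, an object of $\Pre(P_G^{op})=\Fun(P_G,\Vectk)$ is the datum of spaces attached to vertices and edges, with maps $F(U_v)\to F(U_e)$ for $v\leq e$.

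Next, I define $(-)^\vee$ on objects by postcomposition, $F\mapsto \vee\circ F$. Since $\vee:\Vectk\to\Vectk^{op}$ is a contravariant functor, $\vee\circ F$ is a contravariant functor $P_G^{op}\to\Vectk$, that is, a covariant functor $P_G\to\Vectk$, which is an object of $\Pre(P_G^{op})$. In explicit terms, the map $F_{v\leq e}:F(e)\to F(v)$ is sent to its transpose $F_{v\leq e}^\vee:F(v)^\vee\to F(e)^\vee$. A natural transformation $\eta:F\to F'$ is sent to $\eta^\vee:F'^\vee\to F^\vee$, taken componentwise. Functoriality and the reversal of arrows follow immediately from the functoriality of $\vee$. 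The same formula defines a functor $\Pre(P_G^{op})\to\Pre(\open(G)_{gross})$ in the opposite direction.

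To show that this gives an anti-equivalence, I will use that vector spaces are finitely generated; this is the standing assumption, and the one point that really matters. Under it, the canonical evaluation map $\mathrm{ev}_V:V\to V^{\vee\vee}$ is an isomorphism and is natural in $V$. Applying $\mathrm{ev}$ pointwise then gives natural isomorphisms $F\cong F^{\vee\vee}$ on both sides, because naturality of $\mathrm{ev}$ with respect to the restriction maps is exactly the identity $f^{\vee\vee}\circ\mathrm{ev}=\mathrm{ev}\circ f$. Thus the two composites are isomorphic to the identities, and $(-)^\vee$ is an anti-equivalence. This step is also where finiteness of $G$ could matter. Since everything is computed pointwise, no sums over the graph appear, so I expect finiteness of $G$ not to be needed here.

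Finally, I will combine this with the two sheaf-theoretic equivalences already established. Equation (\ref{eq:stdtopgross}), via Theorem \ref{GrossBasisThm} and Remark \ref{SetstoVect} for $\Vectk$-valued sheaves, gives $\Sh(\open(G),\cT_G)\simeq\Pre(\open(G)_{gross})$. Equation (\ref{eq:posetsheavesopen}) gives $\Sh(\open(A(P_G)),\cT_{A(P_G)})\simeq\Pre(P_G^{op})$. Composing these equivalences with the anti-equivalence above yields the claimed duality. The main obstacle, which is really a matter of care rather than difficulty, is keeping track of variance. One has to make sure that the arrow $v\to e$ in the gross category corresponds to the arrow $U_e\subseteq U_v$ in the Alexandrov site, so that dualizing restriction maps $F(e)\to F(v)$ produces exactly maps $F(U_v)\to F(U_e)$ and no further op is needed.
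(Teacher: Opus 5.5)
Your proposal is correct and follows the paper's argument. You identify the gross objects of $(\open(G),\cT_G)$ with $P_G$ (Definition \ref{Subgraphtop}, Observation \ref{sheaves:digraph}) and Alexandrov sheaves with functors $P_G\to\Vectk$ (Observation \ref{Alextop}), dualize pointwise, and compose with the equivalences \eqref{eq:stdtopgross} and \eqref{eq:posetsheavesopen}. You also supply the details the paper's one-line proof leaves implicit: the natural double-dual isomorphism, which uses finite-dimensionality, and the variance check that $v\leq e$ corresponds to $U_e\subseteq U_v$.
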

\begin{proof}
The proof follows by putting together Definition \ref{Subgraphtop}, Observation \ref{sheaves:digraph}, and Observation \ref{Alextop}.
    
\end{proof}

\begin{remark}
The previous theorem clarifies the link between sheaves on graphs as defined by Curry \cite{curry},
Hansen-Ghirst \cite{HG19}, Bodnar et al. \cite{bodnar}, etc. on one side, and
Friedman \cite{FrTop} on the other. Notice that the first group of authors works with undirected graphs, while the theorem above is stated in terms of directed graphs. However, on the one hand we observe that the same exact statement holds true for the category
$\Graphs$ (i.e. undirected graphs). On the other hand, recall that we have the functor $\Theta^p$ that, given a $G\in\Graphs$ and a sheaf $F$ on $\Theta(G)$, allows us to construct a sheaf on $G$ (cfr. Observations \ref{obs:moveshvs} and \ref{obs:moveshvalex}). This gives the following diagram that is commutative up to isomorphism.
    $$\xymatrix{
    \Sh(\open(\Theta(G)),\cT_{\Theta(G))}) \ar[r]^-\simeq\ar[d]^-{\Theta^p} &  \Sh(\open(A(P_{\Theta(G))})),\cT_{A(P_{\Theta(G))})}) \ar[d]^-{\Theta^p} \\
    \Sh(\open(G),\cT_G) \ar[r]^-\simeq &  \Sh(\open(A(\mathcal{P}_G)),\cT_{A(\mathcal{P}_G)}) \\
    }$$
    The commutativity follows from the fact the the direct sum commutes with $\Hom(-,\K)$ and that $F(e)=F(U_e)$ by construction.
\end{remark}
\subsection{Homology and cohomology on directed graphs}\label{sec:hocohlapl}
We
study the notions of homology and cohomology for sheaves on a digraph with
respect to the standard topology and to the one defined by the poset of the digraph (Alexandrov topology).

We assume that all the (co)sheaves we consider
{have values in the category of finitely generated $\bk$-vector spaces} $\Vectk$,
although we could consider some of the categories listed in Obs. \ref{SetstoVect} in place of $\Vectk$.
We also assume $G\in\diGraphs$ to be a finite digraph.

\begin{definition}
For every presheaf of vector spaces $F$ on $\open(G)$ we define
$$
C_1(G,F):=\oplus_{e\in E_G}F(e), \qquad C_0(G,F):=\oplus_{v\in V_G}F(v)
$$
Paralleling \cite{FrTop}, Sec. 1,
the restriction maps can be bundled together to maps 
$$
\begin{array}{ccc}
\widetilde{d_h}:=\oplus_{e\in E_G}F_{h_G(e)\leq e}:C_1(G,F)&\rightarrow& C_0(G,F) \\ \\
\cdots\oplus x\oplus\cdots &\mapsto & \cdots\oplus F_{h_G(e)\leq e}(x)\oplus\cdots \\ \\

\widetilde{d_t}:=\oplus_{e\in E_G}F_{t_G(e)\leq e}:C_1(G,F)&\rightarrow &C_0(G,F) \\ \\
\cdots\oplus x\oplus\cdots&\mapsto& \cdots\oplus F_{t_G(e)\leq e}(x)\oplus\cdots
\end{array}
$$
We obtain the chain complex $C_\ast(G,F)$
$$
\widetilde{d}:=\widetilde{d_h}-\widetilde{d_t}:C_1(G,F)\rightarrow C_0(G,F)
$$
and we denote the homology groups of this complex as $H_1(G,F):=\mathrm{ker}(\widetilde{d})$ and
$H_0(G,F):=\mathrm{coker}(\widetilde{d})$ respectively.
\end{definition}

\begin{observation}\label{RMK:sheafcosheaf}
1) The complex $C_\ast(G,F)$,
once we choose an ordering on the vertices of $G$ and view it as a cochain complex,
is the \v{C}ech complex of $F$ with respect to the covering $\cU$ given by the edges. In other words, if we see the chain complex $C_\ast(G,F)$ as a cochain complex, we get exactly the complex $\check{C}(\cU',F)$ where $\cU'$ is the covering comprised of the closed edges of $G$.
Accordingly, we have in this case that $H_1(G,F)\cong F(G)$, since
$H_1(G,F)\cong H^0(G,F)$.

\medskip\noindent
2) Similarly, if we consider a sheaf $\widehat{F}\in\Sh(\open(A(P_G)),\cT_{A(P_G)})$,
we can set $C^1(G,\widehat{F}):=\oplus_{e\in E_G}\widehat{F}(U_e)$ and $C^0(G, \widehat{F})
:=\oplus_{v\in V_G}\widehat{F}(U_v)$ and get a cochain complex
$C^0(G,\widehat{F})\rightarrow C^1(G,\widehat{F})$, denoted $C_\ast(G,\widehat{F})$,
whose cohomology groups we denote as $H^0(G,\widehat{F})$ and $H^1(G,\widehat{F})$.
Also in this case, this complex can be seen as the \v{C}ech complex relative to the
covering of $G$ by the open sets $U_v$, $v\in V_G$ and we have
$H^0(G,\widehat{F})\cong \widehat{F}(G)$.
\end{observation}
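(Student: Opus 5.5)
The plan is to treat both items the same way. First I write down the Čech complex of the indicated covering explicitly, using Theorem \ref{GrossBasisThm}: on each site a sheaf is determined by its values on gross objects, and its value on a disjoint union of gross objects is the product (equivalently, the direct sum) of the values. Then I compare that Čech complex term by term with $C_\ast(G,F)$, respectively $C_\ast(G,\widehat F)$. The statements about global sections then come from the sheaf axiom, since the equalizer of the first Čech differential is $F(G)$, respectively $\widehat F(G)$.

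For item 2), which is the cleaner case, I cover $A(P_G)$ by the open stars $U_v$, $v\in V_G$. They cover because every edge has at least one endpoint. Fix an ordering $v_1<\dots<v_n$ of $V_G$. For $v<w$, the intersection $U_v\cap U_w$ is the set of edges joining $v$ and $w$. It is open and discrete, being a union of the minimal opens $U_e=\{e\}$. Hence
$$
\widehat F(U_v\cap U_w)\cong\bigoplus_{e\text{ joining }v,w}\widehat F(U_e),
$$
and summing over $v<w$ gives $\bigoplus_{e}\widehat F(U_e)=C^1(G,\widehat F)$ over all non-loop edges. Self-loops lie in only one star and so contribute no intersections; I would note this as a caveat, or assume there are no loops. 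By \eqref{eq-d}, the Čech differential sends $(s_v)$ to $\widehat F_{w\leq e}(s_w)-\widehat F_{v\leq e}(s_v)$ on the component of $e$, which is the cochain map of item 2) up to the sign fixed by the ordering. Therefore $H^0=\ker$ equals the equalizer $\widehat F(G)$, by the sheaf property for this covering.

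For item 1) I cover $G$ by its closed edges, that is, by the gross subgraphs $U_e$ consisting of $e$ together with its endpoints (adding isolated vertices if there are any). Then $C^0(\cU,F)=\bigoplus_e F(e)=C_1(G,F)$. A fibre product $U_e\times_G U_{e'}$ is the discrete subgraph on the common endpoints, so $F$ of it is $\bigoplus F(v)$ over those endpoints. The Čech differential therefore compares $F_{v\leq e}(x_e)$ with $F_{v\leq e'}(x_{e'})$. The global sections $F(G)$ are computed as the limit over $P_G$ given by Theorem \ref{GrossBasisThm}, namely the kernel of this $0$-th Čech differential, again by the sheaf axiom.

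The main obstacle is the final identification of $\ker(\widetilde d)$, where $\widetilde d=\widetilde{d_h}-\widetilde{d_t}$, with this kernel. The map $\widetilde d$ aggregates at each vertex $v$ the signed sum over all incident edges, whereas the Čech differential imposes pairwise agreement between edges. I would first check the identification carefully on the one-edge graph and on the two-cycle of Fig.~\ref{fig-g1}, using the chosen ordering to orient edges as in the "view it as a cochain complex" convention. If the literal equality fails, I would restate the result as an isomorphism after reindexing through the Čech complex, i.e. $H^0(\check C(\cU',F))\cong F(G)$, and deduce $H_1$ from that.
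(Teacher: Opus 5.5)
Your argument for item 2) is sound. The open stars cover $G$, and $U_v\cap U_w$ is the disjoint union of the minimal opens $U_e=\{e\}$ over the edges joining $v$ and $w$. So the ordered \v{C}ech complex is the stated one up to a sign on each edge summand (and up to self-loops, which, as you note, the ordered complex never sees), and $H^0=\widehat F(G)$ by the sheaf axiom.

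For item 1) the trouble goes beyond the step you flag: that step cannot be carried out. Your fallback (``restate as $H^0(\check{C}(\cU',F))\cong F(G)$ and deduce $H_1$ from that'') fails because $\ker\widetilde d$ and $F(G)$ are genuinely different. The degree-$0$ terms agree, but the \v{C}ech $C^1$ of the closed-edge covering is $\bigoplus_{e<e'}\bigoplus_{v\in e\cap e'}F(v)$. A vertex lying on $k$ edges contributes $\binom{k}{2}$ copies of $F(v)$ there (none for a leaf), whereas $C_0(G,F)$ has exactly one copy. Moreover, $\widetilde d$ imposes at $v$ a single signed-sum condition instead of pairwise agreement.

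The one-edge graph $v\to w$ you plan to test already decides it. Take $F(v)=F(w)=F(e)=\K$ with identity maps. Then $\widetilde d(x)=(-x,x)$ is injective, so $H_1(G,F)=0$, while $G$ is itself gross and $F(G)=F(e)=\K$. Adding a self-loop at each vertex does not help: it gives $\ker\widetilde d\cong\K^2$ against $F(G)\cong\K$. So item 1) as literally stated is false, and no reindexing rescues it; the paper itself gives no argument beyond the assertion.

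What does hold is the version suggested by the ordering on \emph{vertices}. Up to signs and self-loops, $C_*(G,F)$ is the \v{C}ech chain complex of $F$ regarded as a cosheaf on $A(P_G)$ for the open-star covering $\{U_v\}$. Equivalently, its linear dual is $\check{C}(\{U_v\},\widetilde{F^\vee})$, as the paper states right after this observation. Running your item 2) argument on $\widetilde{F^\vee}$ then yields $H_0(G,F)^\vee\cong\widetilde{F^\vee}(G)$. That is a statement about the cokernel of $\widetilde d$, not $H_1(G,F)\cong F(G)$.
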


Recall that a sheaf $\widetilde{F}\in\Sh(\open(G),\cT_G)$ on a 
$G\in\diGraphs$ is determined by a presheaf $F\in\Pre(P_G)$.
Here we shall deviate from the notation used so far and we introduce different notations
for the sheaf $\widetilde{F}\in\Sh(\open(G),\cT_G)$ and the presheaf $F$ we get on the gross objects
to avoid confusion. By Theorem \ref{thm:dualityvect}, by applying the functor
$\Hom_{\Vectk}(-,\mathbb{K})$ to $F$ we get a functor $F^\vee\in\Pre(P_G^{op})$
determining a sheaf $\widetilde{F^\vee}\in\Sh(\open(A(P_G)),\cT_{A(P_G)})$.
One
can check that $\Hom_{\Vectk}(C_\ast(G,\widetilde{F}),\mathbb{K})\cong \check{C}(\cU, \widetilde{F^\vee})$ where $\cU$ is the covering $\{U_v\}_{v\in V_G}$ (this uses the fact that the vector spaces we consider are finite dimensional
and that we consider only finite digraphs). Notice that $\widetilde{F^\vee}$ is a priori different from the presheaf $\Hom_{\Vectk}(\widetilde{F}(-),\mathbb{K})$. However, using the fact that $\Hom_{\Vectk}(-,\mathbb{K})$ is exact, we have the following result, that we leave to the
reader as a check.

\begin{proposition}
Let the notation be as above. If  $F'\in\Sh(\open(G),\cT_G)$, then  $F'^\vee:=\Hom_{\Vectk}(F,\mathbb{K})$
is a cosheaf with respect to the site $(\open(A(P_G)),\cT_{A(P_G)})$. Moreover
$$
\begin{array}{c}
\Sh(\open(G),\cT_G)\cong\coSh(\open(A(P_G)),\cT_{A(P_G)}) \\ \\
\coSh(\open(G),\cT_G)\cong\Sh(\open(A(P_G)),\cT_{A(P_G)})
\end{array}
$$
where we have denoted as $\coSh$ the categories of cosheaves on a given Grothendieck site.
\end{proposition}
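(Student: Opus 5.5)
We have to show that if $F'\in\Sh(\open(G),\cT_G)$, then $F'^\vee:=\Hom_{\Vectk}(F',\mathbb{K})$ is a cosheaf for $(\open(A(P_G)),\cT_{A(P_G)})$, and that the two displayed equivalences hold. The plan is to reduce everything to gross objects, dualize pointwise on the poset $P_G$, and then check the (co)sheaf condition via the \v{C}ech complexes of Observation \ref{RMK:sheafcosheaf}.

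\emph{Reduction to gross objects.} By Theorem \ref{GrossBasisThm}, a sheaf on $(\open(G),\cT_G)$ is the same as a presheaf on $\open(G)_{gross}\cong P_G$. By (\ref{eq:posetsheavesopen}), a sheaf on $A(P_G)$ is the same as a functor on $P_G$ in the opposite variance. The dual statement for cosheaves should hold as well: a cosheaf on either site is determined by its values on gross objects, namely the vertices and edges in one case and the opens $U_v, U_e$ in the other. I would prove this the way the Stacks argument (Tag 03A0) goes, but with arrows reversed. A cosheaf on a site is exactly a sheaf with values in $\Vectk^{op}$, and $\Vectk^{op}$ (finite dimensional spaces) is again an abelian category with finite limits, so the comparison lemma applies. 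Here we use that $G$ is finite, so every cover can be refined to a finite gross cover and only finite limits and colimits appear.

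\emph{Pointwise duality.} Since $\vee$ is an anti-equivalence of $\Vectk$ on finite dimensional spaces, composing with it gives the duality $\Pre(P_G)\simeq\Pre(P_G^{op})^{op}$, which is already Theorem \ref{thm:dualityvect}. Combining this with the reduction step:
\begin{itemize}
\item a sheaf on $\open(G)$ corresponds to a presheaf on $P_G$;
\item its dual is a functor on $P_G$ of the other variance;
\item that functor is precisely the gross datum of a cosheaf on $A(P_G)$.
\end{itemize}
This yields $\Sh(\open(G),\cT_G)\cong\coSh(\open(A(P_G)),\cT_{A(P_G)})$. The second equivalence follows symmetrically.

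\emph{Identifying the cosheaf with $\Hom(F'(-),\K)$.} The substantive step is checking that the cosheaf obtained this way actually agrees with the naive presheaf $U\mapsto\Hom(F'(U),\K)$ on all opens, not just on gross ones. Since $G$ is finite, every open $U$ of $A(P_G)$ corresponds to a finite subgraph with a finite gross cover. The sheaf condition says $F'(U)$ is the equalizer (kernel) of $\prod F'(U_i)\rightrightarrows\prod F'(U_i\times U_j)$, i.e. the $H^0$ of the \v{C}ech complex as in Observation \ref{RMK:sheafcosheaf}. Because $\Hom(-,\K)$ is exact and turns finite products into finite coproducts, the dual $F'(U)^\vee$ is the cokernel of the dualized map, which is exactly the cosheaf (coequalizer) condition for $F'^\vee$ on the corresponding cover. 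This uses the identification $\Hom(C_\ast(G,\widetilde F),\K)\cong\check C(\cU,\widetilde{F^\vee})$ noted just before the statement.

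\emph{Main obstacle.} I expect the hard part to be the bookkeeping of the dictionary between subgraphs $U\in\open(G)$ and open sets of $A(P_G)$. The variance flips, so a subgraph corresponds to a closed set of $A(P_G)$, i.e. an open of the dual space, as in the example after the definition of dual space. One must verify that jointly surjective covers of subgraphs match covers in the relevant topology on the poset side, so that the coequalizer diagrams align.
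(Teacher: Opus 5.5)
\textbf{There is a genuine error in your ``pointwise duality'' step.} It has the variance backwards.

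In $A(P_G)$ one has $U_e\subseteq U_v$ whenever $v\leq e$. So the gross datum of a cosheaf on $(\open(A(P_G)),\cT_{A(P_G)})$ consists of extension maps $\widehat{F}(U_e)\rightarrow\widehat{F}(U_v)$. This is the \emph{same} variance as the restriction maps $F_{v\leq e}:F(e)\rightarrow F(v)$ of a sheaf on $(\open(G),\cT_G)$: both are objects of $\Pre(P_G)$. The dual $F^\vee$, with maps $F(v)^\vee\rightarrow F(e)^\vee$, lies in $\Pre(P_G^{op})$. It is the gross datum of a \emph{sheaf} on $A(P_G)$, which is exactly what Theorem \ref{thm:dualityvect} (the result you cite) says. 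So one dualization lands you in $\Sh(\open(A(P_G)),\cT_{A(P_G)})^{op}$, not in $\coSh$.

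The displayed equivalences cannot be anti-equivalences. Already for $G$ a single edge, $\Pre(P_G)\simeq\Pre(P_G)^{op}$ would identify representations of $\bullet\leftarrow\bullet\rightarrow\bullet$ with those of $\bullet\rightarrow\bullet\leftarrow\bullet$. These categories are not equivalent: they have two, resp. one, simple projectives.

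The equivalences are covariant and need no duality at all. Apply your reduction step to both sites and note that $\open(A(P_G))_{\gross}\cong P_G^{op}$. This gives $\Sh(\open(G),\cT_G)\simeq\Pre(P_G)\simeq\coSh(\open(A(P_G)),\cT_{A(P_G)})$ and $\coSh(\open(G),\cT_G)\simeq\Pre(P_G^{op})\simeq\Sh(\open(A(P_G)),\cT_{A(P_G)})$. This matches the paper's remark right after the statement that objects of $\Pre(P_G)$ are the cosheaves of Hansen--Ghrist.

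\textbf{For the first assertion,} your exactness computation is the right tool. It is also all the paper offers: the proof is left as a check, with exactness of $\Hom_{\Vectk}(-,\K)$ as the hint. But what it proves is narrower than you claim. It shows that $F'^\vee$ turns the equalizers of finite jointly surjective covers of \emph{subgraphs} into coequalizers. That is, $F'^\vee$ is a cosheaf on $(\open(G),\cT_G)$, the site of the dual space $\widehat{A(P_G)}$.

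The ``bookkeeping'' you postpone cannot be completed for $A(P_G)$ itself. Subgraphs are down-sets of $P_G$, while opens of $A(P_G)$ are up-sets, e.g.\ $\{ab,bc\}$ in the path with edges $ab$, $bc$. Complementation reverses inclusions and swaps unions with intersections. The covariant closure $U\mapsto\overline{U}$ preserves covers but not intersections. For the cover $\{\{ab\},\{bc\}\}$ of $\{ab,bc\}$, the coequalizer condition would force $F'(G)\cong F(ab)\oplus F(bc)$, which is false for the constant sheaf.

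Consistently with the gross-data computation above, $F'^\vee$ corresponds under the second equivalence to the \emph{sheaf} $\widetilde{F^\vee}$ on $A(P_G)$. So the first assertion can only be proved with $(\open(G),\cT_G)$ in place of $(\open(A(P_G)),\cT_{A(P_G)})$. As literally stated, it has the same variance slip as your step.

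Finally, the identification $\Hom_{\Vectk}(C_\ast(G,\widetilde{F}),\K)\cong\check{C}(\cU,\widetilde{F^\vee})$ is not what this step needs. That complex computes $\widetilde{F^\vee}(G)=(\operatorname{coker}\widetilde{d})^\vee$, not $F'(G)^\vee$. You only need the sheaf condition on a finite cover and right exactness of $(-)^\vee$.
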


This proposition clarifies the relation between our definitions and the
terminology in \cite{HG19},
where objects of $\Pre(P_G)$ are called cosheaves.
This also explains the homological notation we used in the case of a sheaf on a
digraph with respect to the standard topology.

\begin{observation}\label{weak-lapl-obs}
1) Let $G\in\stGraphs$
and consider the standard Grothendieck sites $(\open(A(\mathcal{P}_G)), \cT(A(\mathcal{P}_G)))$
and $(\open(G), \cT_G)$. If we choose the canonical cover of $G$ by
the open stars $\cU(G):=\lbrace U_v\rbrace_{v\in V_G}$ and we give a well-ordering to the vertices,
we have that a weak Laplacian pair with respect to $\cU(G)$ is equivalent to
the datum of two presheaves $\overline{F}\in \Pre(\mathcal{P}_G^{op})$ and
$\underline{F}\in\Pre(\mathcal{P}_G)$ together with linear maps
$f_v:\underline{F}(U_v)\rightarrow \overline{F}(U_v)$
and $f_e:\overline{F}(U_e)\rightarrow \underline{F}(U_e)$ for all $v\in V_G$, $e\in E_G$. 

\medskip\noindent
2) In general if $G\in\diGraphs$ and we consider the cover of
$G$ for the topology $\cT(A(\mathcal{P}_G))$ by the open stars $\cU(G):=\lbrace U_v\rbrace_{v\in V_G}$
with a well-order on it we note that, using the notation of Definition \ref{def:laplacianpairs},
the datum of two presheaves $\overline{F}\in \Pre(\mathcal{P}_G^{op})$
and $\underline{F}\in\Pre(\mathcal{P}_G)$
together with linear maps $f_v:\underline{F}(U_v)\rightarrow \overline{F}(U_v)$ for all $v\in V_G$
bijectively corresponds to the sheaves, the cosheaves and the morphism $F_0(\cU(G))$
involved in the definition of a weak Laplacian pair with respect to $\cU(G)$.
In order to define a weak Laplacian pair, we need
the additional datum of a linear map $f_e:\overline{F}(U_e)\rightarrow \underline{F}(U_e)$
for all $e\in E_G$,
to define a map $F_1(\cU(G))$ as in Def. \ref{def:laplacianpairs}.
Indeed, if we consider two vertices $v,w\in E_G$, we can use the datum of maps $f_e$
to build the map $\oplus f_e$

$$
\overline{F}(U_v\cap U_w)\cong\bigoplus_{e\in \lbrace v\xrightarrow{e_i}w, w\xrightarrow{l_j}v\rbrace}\overline{F}(e)
\rightarrow\bigoplus_{e\in \lbrace v\xrightarrow{e_i}w, w\xrightarrow{l_j}v\rbrace}
\underline{F}(e)\cong\underline{F}(U_v\cap U_w)
$$

Notice however, that these maps involved in the definition of weak Laplacian pair on $\cU(G)$
are not in bijective correspondence with the datum of a linear maps
$f_e:\overline{F}(e)\rightarrow \underline{F}(e)$, since not all the maps
$\overline{F}(U_v\cap U_w)\rightarrow \underline{F}(U_v\cap U_w)$ arise in this way.
\end{observation}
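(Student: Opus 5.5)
The plan is to unwind Definition \ref{def:laplacianpairs} for the specific cover $\cU(G)=\lbrace U_v\rbrace_{v\in V_G}$ of $A(\mathcal{P}_G)$ (resp.\ $A(P_G)$). We compute explicitly the opens $U_v$ and $U_v\cap U_w$ that occur, together with the values of a sheaf and a cosheaf on them. The fibre products in $\open(A(P))$ are intersections. Since the ordered \v{C}ech complex only uses indices $v<w$ with respect to the chosen well-order, we only need $U_v$ and $U_v\cap U_w$ for $v\neq w$; self-loops never occur in such an intersection.

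\textbf{Step 1 (the sheaf and cosheaf side).} By (\ref{eq:posetsheavesopen}), a sheaf $\overline{F}$ on $A(\mathcal{P}_G)$ is the same as a presheaf on the gross opens $U_p$, i.e.\ an object of $\Pre(\mathcal{P}_G^{op})$. By the Proposition relating $\Sh(\open(G),\cT_G)$ and $\coSh(\open(A(P_G)),\cT_{A(P_G)})$, together with Theorem \ref{thm:dualityvect}, a cosheaf $\underline{F}$ is dually the same as an object of $\Pre(\mathcal{P}_G)$: its value on $U_p$ together with corestriction maps between gross opens. In particular $\overline{F}(U_v)$ and $\underline{F}(U_v)$ are just the values of these presheaves at $v$. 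The maps $f_v:\underline{F}(U_v)\to\overline{F}(U_v)$ are therefore exactly the components of $F_0(\cU(G))$, with no constraint. This already gives the first half of both items.

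\textbf{Step 2 (the intersections).} By Example \ref{graph-alex}, $U_v\cap U_w$ for $v\neq w$ is the set of edges joining $v$ and $w$ in either direction. Since each $U_e=\lbrace e\rbrace$ is open, this set is the disjoint union $\coprod_e U_e$. The sheaf axiom applied to this disjoint cover gives $\overline{F}(U_v\cap U_w)\cong\prod_e\overline{F}(U_e)=\bigoplus_e\overline{F}(e)$, where we use that the product is finite. Dually, the cosheaf axiom gives $\underline{F}(U_v\cap U_w)\cong\bigoplus_e\underline{F}(e)$. I expect this step to be the only real obstacle, namely checking that the cosheaf condition for a cover by pairwise disjoint opens reduces to a coproduct, i.e.\ that the equalizer/coequalizer terms indexed by the empty overlaps vanish.

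For $G\in\stGraphs$ there is at most one such edge, so either $U_v\cap U_w=U_e$ or the intersection is empty. In that case a map $f_{v<w}$ is literally a map $f_e:\overline{F}(U_e)\to\underline{F}(U_e)$, and every edge arises from exactly one ordered pair $v<w$. This bijection between data proves item 1).

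\textbf{Step 3 (the multi-edge case).} For general $G\in\diGraphs$, the family $(f_e)_e$ over the edges between $v$ and $w$ yields the block-diagonal map $\oplus f_e$ between the two direct sums, which is the displayed formula. To show that this is not a bijection, we exhibit a counterexample. Take two vertices $v,w$ with two parallel edges $e_1,e_2$, and set all spaces equal to $\bk$. Then $\Hom(\bk^2,\bk^2)$ contains maps with nonzero off-diagonal entries, e.g.\ the swap of the two summands, and such a map is not of the form $f_{e_1}\oplus f_{e_2}$. This proves item 2).
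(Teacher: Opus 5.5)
Your proposal is correct and follows essentially the paper's own reasoning. The Observation is justified there only by unwinding Definition~\ref{def:laplacianpairs} on the open-star cover, identifying $\overline{F}(U_v\cap U_w)$ with $\bigoplus_e\overline{F}(e)$, and noting that non-block-diagonal maps do not come from the $f_e$; you add useful detail with the disjoint-cover (co)sheaf computation and the explicit swap counterexample.

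One point should be made explicit: the bijection in item 1) needs $G$ loopless. As you yourself note, a loop lies in no $U_v\cap U_w$ with $v\neq w$, so your claim that every edge arises from exactly one pair $v<w$ fails for loops, and their maps $f_e$ would be extra data not contained in a weak Laplacian pair.
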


\subsection{Sheaf Laplacians
  on directed graphs}\label{sec:hocohlapl2}
We introduce, via Laplacian pairs, the notion of sheaf Laplacian,
which turns out to be quite important for the applications \cite{bodnar}.
As in the previous subsection, we assume all graphs to be finite and all the vector spaces we consider to be finitely generated.

Let be $G$ be in $\diGraphs$.
Consider the site
$(\open(A(P_G)),$  $\cT(A(P_G)))$, with
the canonical cover $\cU(G)$ introduced in Observation \ref{weak-lapl-obs} with a well-ordering to it.
Let $\ccF$ be a sheaf, that we identify with its restriction
to $\Pre(P_G^{op})$.

\begin{definition}\label{def:notableLaplacianpairs}
Let $\ccF$ be a (pre)sheaf
in $\Vectk$ and $\ccF^\ast$ the co(pre)sheaf defined as $\Hom_{\Vectk}(-,\bk)\circ\ccF$.
We define  the \textit{\v{C}ech Laplacian} $L_\ccF$, as the \v{C}ech Laplacian associated with the
Laplacian pair $(\ccF, \ccF^\ast, \cU(G))$ where $f_v$, $f_e$ are the standard isomorphisms
once bases are given for all $\ccF(U_v)$, $\ccF(e)$.
Alternatively, to define $L_\ccF$, 
instead of the choice of bases, we can also fix an inner product on such spaces.
\end{definition}

\begin{remark}
One can easily adapt this definition for $G \in \Graphs_{\leq 1}$.
The same is true for the rest of the definitions and propositions of this section.

\end{remark}

We give a proposition, whose proof is a simple direct check, connecting our
$L_\ccF$ to the so called sheaf Laplacian as in \cite{bodnar}.

\begin{proposition}
Let 
$L_\ccF:  C^0 (G, \ccF)\rightarrow  C^0 (G, \ccF)$ be the  \v{C}ech Laplacian
of the Laplacian pair $(\ccF, \ccF^\ast, \cU(G))$ as above. Then its explicit formula is the following:
\begin{equation}\label{sheaf-lap}
L_\ccF(x)_v = \sum_{u,v\leq e} \ccF^*_{v\leq e}(\ccF_{v\leq e}x_v-\ccF_{u\leq e}x_u)
\end{equation}
\end{proposition}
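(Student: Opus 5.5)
The plan is to unwind the definition of the \v{C}ech Laplacian $\Delta(\ccF,\ccF^\ast,\cU(G)) = F_0(\cU(G))\circ\underline{d}\circ F_1(\cU(G))\circ\overline{d}$ for the cover $\cU(G)=\lbrace U_v\rbrace_{v\in V_G}$, term by term. First I identify the pieces of the \v{C}ech complex. By Observation \ref{RMK:sheafcosheaf}(2), $C^0(\cU(G),\ccF)=\oplus_v\ccF(U_v)$. By Observation \ref{weak-lapl-obs}(2), for $v<w$ we have $\ccF(U_v\cap U_w)\cong\oplus_e\ccF(U_e)$, where $e$ runs over the edges joining $v$ and $w$ in either direction. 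Hence $C^1(\cU(G),\ccF)\cong\oplus_{e}\ccF(U_e)$, where $e$ ranges over edges with distinct endpoints. Self-loops appear only in $U_v$ itself and never in a double intersection, so they contribute nothing. I will note this, consistent with reading the sum in (\ref{sheaf-lap}) over edges with $u\neq v$; for a self-loop the summand vanishes anyway, since then $u=v$. The same identifications hold for $\ccF^\ast$, with the maps dualized.

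Next I compute $\overline{d}$ from (\ref{eq-d}). For an edge $e$ between $u<w$ (in the chosen well-order), the restriction of the sheaf along $U_u\cap U_w\subseteq U_w$, respectively $U_u$, is $\ccF_{w\leq e}$, respectively $\ccF_{u\leq e}$. This gives $(\overline{d}x)_e=\ccF_{w\leq e}x_w-\ccF_{u\leq e}x_u$. Dually, the cosheaf differential $\underline{d}$ sends $y\in\ccF^\ast(U_e)$ to $\ccF^\ast_{w\leq e}y$ in the $w$-slot and $-\ccF^\ast_{u\leq e}y$ in the $u$-slot, using the extension (corestriction) maps of $\ccF^\ast=\Hom(-,\bk)\circ\ccF$. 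The maps $F_1$ and $F_0$ are the identifications $\ccF(U_e)\cong\ccF^\ast(U_e)$ and $\ccF^\ast(U_v)\cong\ccF(U_v)$ given by the chosen bases or inner products. Under these identifications $\ccF^\ast_{v\leq e}$ becomes the adjoint (transpose) of $\ccF_{v\leq e}$, which is how the statement should be read.

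Composing, the $v$-component of $L_\ccF(x)$ collects one term for each edge $e$ between $v$ and some $u$. If $v$ is the larger endpoint, the term is $\ccF^\ast_{v\leq e}(\ccF_{v\leq e}x_v-\ccF_{u\leq e}x_u)$. If $v$ is the smaller endpoint, the term is $-\ccF^\ast_{v\leq e}(\ccF_{u\leq e}x_u-\ccF_{v\leq e}x_v)$, which is the same expression. Summing over edges gives (\ref{sheaf-lap}), and the result is independent of the well-order.

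The main obstacle is the careful sign and orientation bookkeeping in the cosheaf differential $\underline{d}$. I must check that its sign convention is exactly the transpose of (\ref{eq-d}), so that the two cases ($v$ smaller or larger endpoint) agree rather than cancel. I also need to verify the identification of $\ccF(U_v\cap U_w)$ as a direct sum over the multiple edges.
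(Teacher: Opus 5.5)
Your proposal is correct and takes the same approach as the paper. The paper calls this a simple direct check, and that check is exactly your term-by-term unwinding of $F_0\circ\underline{d}\circ F_1\circ\overline{d}$ for the cover by open stars, using $\ccF(U_v\cap U_w)\cong\bigoplus_e\ccF(U_e)$ and the transpose sign convention for $\underline{d}$, under which the two orderings of an edge's endpoints contribute the same summand.
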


\begin{definition}\label{def:DeltaF}
    If $\ccF$ is a (pre)sheaf with values in one of the categories listed in \ref{SetstoVect} whose restriction maps are all invertible, then by inverting the restriction maps we get a cosheaf  we denote as $\ccF^{-1}\in\Pre(P_G)$. Defining now $f_v$, $f_e$ to be identity morphisms we get a strong Laplacian pair $(\ccF, \ccF^{-1}, \cU(G))$  whose \v{C}ech Laplacian we denote as $\Delta_\ccF$.
\end{definition}

\begin{observation}\label{Obs:bodnarlapl}
Let $G\in\stGraphs$ and let be $\ccF\in\Pre(\mathcal{P}_G^{op})$
be a (pre)sheaf and  $\ccF^\ast$ be either the functor $\ccF^\ast$ of Definition \ref{def:notableLaplacianpairs} or the functor $\ccF^{-1}$ of \ref{def:DeltaF}.
Then we get the (pre)sheaf $\Theta_p(\ccF)\in\Pre(P_{\Theta(G)}^{op})$ described in Observation
\ref{obs:moveshvalex2} and $\Theta_p(\ccF)^\ast\cong\Theta_p(\ccF^\ast)\in\Pre(P_{\Theta(G)})$
where $\Theta_p(\ccF)^\ast$ is the functor we would obtain starting from $\Theta_p(\ccF)$ in either Definition \ref{def:notableLaplacianpairs} or Definition \ref{def:DeltaF}.
We have that $L_{\ccF}=\frac{1}2 L_{\Theta_p(\ccF)}$ (in the case of Definition \ref{def:notableLaplacianpairs})
or $\Delta_{\ccF}=\frac{1}2\Delta_{\Theta_p(\ccF)}$ (in the case of Definition \ref{def:DeltaF}).
\end{observation}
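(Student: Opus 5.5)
The plan is a direct comparison of the two explicit formulas: compute the Laplacian of $\Theta_p(\ccF)$ on $\Theta(G)$ and compare it term by term with formula (\ref{sheaf-lap}) for $\ccF$ on $G$.

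First I will record the data of $\Theta_p(\ccF)$ from Observation \ref{obs:moveshvalex2}. Every edge $e$ of $G$ with distinct endpoints $u,v$ gives two edges $e_1=u\to v$ and $e_2=v\to u$ of $\Theta(G)$, with $\Theta_p(\ccF)(U_{e_i})=\ccF(U_e)$. The restriction maps are $\Theta_p(\ccF)_{v\leq e_i}=\ccF_{v\leq e}$ and $\Theta_p(\ccF)_{u\leq e_i}=\ccF_{u\leq e}$. Vertex spaces are unchanged, so $C^0(\Theta(G),\Theta_p(\ccF))=C^0(G,\ccF)$ and both Laplacians act on the same space. Next I will check the identification $\Theta_p(\ccF)^\ast\cong\Theta_p(\ccF^\ast)$. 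The maps in the pair are built edgewise, by dualizing with the chosen bases or inner products, or by inverting when the restriction maps are invertible. Both constructions commute with duplicating an edge, provided the same basis or inner product is used on $\ccF(U_e)$ for $e_1$ and for $e_2$. So the cosheaf maps on $e_1$ and $e_2$ are both $\ccF^\ast_{v\leq e}$, resp.\ $\ccF^{-1}_{v\leq e}$.

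Then I will apply formula (\ref{sheaf-lap}), and the analogous formula for $\Delta_\ccF$, to $\Theta_p(\ccF)$ at a vertex $v$. Every undirected edge $e$ joining $v$ to $u\neq v$ contributes the two summands for $e_1$ and $e_2$. These are identical: each equals $\ccF^\ast_{v\leq e}(\ccF_{v\leq e}x_v-\ccF_{u\leq e}x_u)$. So each such edge contributes twice its contribution to $L_\ccF(x)_v$, which gives $L_{\Theta_p(\ccF)}=2L_\ccF$, equivalently $L_\ccF=\tfrac12 L_{\Theta_p(\ccF)}$. The same computation with $\ccF^{-1}$ in place of $\ccF^\ast$ gives $\Delta_\ccF=\tfrac12\Delta_{\Theta_p(\ccF)}$.

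The main obstacle is the bookkeeping inside the \v{C}ech formalism. The intersection $U_u\cap U_v$ in $A(P_{\Theta(G)})$ is $\{e_1,e_2\}$, so $\Theta_p(\ccF)(U_u\cap U_v)=\ccF(U_e)^{\oplus 2}$. I must check that the ordered \v{C}ech differential, taken with the well-ordering on vertices, gives the signed difference on each copy separately, so that formula (\ref{sheaf-lap}) really does apply summand-wise. Loops need separate treatment. A loop is not doubled by $\Theta$, and it contributes $\ccF^\ast_{v\leq e}(\ccF_{v\leq e}x_v-\ccF_{v\leq e}x_v)=0$ in both Laplacians. So loops are harmless, but the statement should be read for $G\in\stGraphs$, where everything is as above.
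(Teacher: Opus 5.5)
Your proposal is correct and is the direct check the paper leaves implicit, since the observation is stated without a separate proof. In the ordered \v{C}ech complex of the open-star cover one has $U_u\cap U_v=\{e_1,e_2\}$ in $A(P_{\Theta(G)})$, so $\Theta_p(\ccF)(U_u\cap U_v)=\ccF(U_e)^{\oplus 2}$ and both components carry the same signed difference, because the sign depends only on the vertex ordering and not on edge direction; hence each undirected edge contributes twice its term in (\ref{sheaf-lap}), while loops never enter $C^1$. Your requirement that the bases (or inner products) on $\ccF(U_{e_1})$, $\ccF(U_{e_2})$ and on the vertex spaces agree with those used for $\ccF$ is a hypothesis the statement uses tacitly, and it is right to make it explicit.
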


\begin{observation}
 Consider $G\in\diGraphs$. Given a (pre)sheaf-cosheaf pairs $\overline{F}\in\Pre(P_G^{op})$, $\underline{F}\in\Pre(P_G)$, setting $\overline{F}(v\leq e)=0$ if $v$ is the head of $e$ and $\underline{F}(v\leq e)=0$ if $v$ is the tail of $e$ (or viceversa) still gives us a sheaf-cosheaf pair (no need to check functoriality). This modification gives rise to ``\textit{directed Laplacians}".
\end{observation}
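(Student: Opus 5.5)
The plan is to reduce the claim to a statement about the shape of the poset $P_G$. By Observation \ref{bod-sh} and Remark \ref{cat-poset}, a presheaf $\overline{F}\in\Pre(P_G^{op})$ (resp.\ $\underline{F}\in\Pre(P_G)$) is just a functor on the category associated with $P_G$ (resp.\ its opposite). So I only have to check that the modified assignment is still a functor. Concretely, the modification keeps all spaces $\overline{F}(U_v)$, $\overline{F}(U_e)$, $\underline{F}(v)$, $\underline{F}(e)$. It keeps the identity maps. For each relation $v<e$ it replaces $\overline{F}(v\leq e)$, resp.\ $\underline{F}(v\leq e)$, by $0$ when $v=h_G(e)$, resp.\ $v=t_G(e)$, or the other way round in the ``viceversa'' version. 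All other maps stay as they were.

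The key step is that $P_G$ has height at most one. Its elements are the vertices and the edges, and the only strict relations are $v<e$ with $v\in\{h_G(e),t_G(e)\}$. No edge lies below anything other than itself, and no vertex lies above anything other than itself. So in the category of $P_G$ (or of $P_G^{op}$) there are no two composable non-identity arrows. For a composite $x\leq y\leq z$, at least one of the two arrows is an identity. Functoriality $F(x\leq z)=F(x\leq y)\circ F(y\leq z)$, or its covariant analogue, then holds trivially for any choice of maps on the non-identity arrows. The same holds for $F(\id)=\id$, since identities were not modified. This is the precise sense of ``no need to check functoriality''. Since $P_G$ is a poset, there is at most one arrow $v\to e$. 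The new maps are therefore well defined once one rule is fixed for each pair $v<e$.

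The only point that needs care, and the one I expect to be the real subtlety, is a self-loop $e$ at $v$. There $v$ is both head and tail, but $P_G$ has a single arrow $v\leq e$. I would adopt the convention that the loop's restriction is set to $0$ in both $\overline{F}$ and $\underline{F}$, as is forced if either rule is applied. Then the assignment is unambiguous and the previous argument applies unchanged.

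Finally, I would note that being a weak Laplacian pair (Definition \ref{def:laplacianpairs}, Observation \ref{weak-lapl-obs}) only requires additional maps $f_v$ and $f_e$ between the values of the two presheaves on objects. These values are untouched, so the original $f_v$, $f_e$ still give a weak Laplacian pair. Its \v{C}ech Laplacian is the ``directed Laplacian''. Substituting into (\ref{sheaf-lap}) shows that only the terms where $v$ is the tail, resp.\ the head, of $e$ survive, which is the expected directed formula.
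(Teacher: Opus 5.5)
Your proof is correct and follows the paper's own reasoning: the parenthetical ``no need to check functoriality'' rests on $P_G$ having height one, so there are no two composable non-identity arrows and any choice of maps on the relations $v<e$ is automatically functorial (your self-loop convention is a sensible addition the paper leaves implicit). One refinement of your closing remark: with the convention as stated, $\overline{F}(h_G(e)\leq e)=0$ and $\underline{F}(t_G(e)\leq e)=0$, only edges having $v$ as head contribute at $v$, and in those terms the diagonal piece $\underline{F}_{v\leq e}\overline{F}_{v\leq e}x_v$ also vanishes, so the resulting operator has $v$-component $-\sum_{e:\,u\to v} f_v\underline{F}_{v\leq e}f_e\overline{F}_{u\leq e}x_u$, a purely off-diagonal transfer term rather than (\ref{sheaf-lap}) restricted to half of the edges.
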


\begin{remark}
Laplacian pairs can be generalised also the the context of bisheaf complexes (\ref{def:bisheafcplx}) arising from sheaves and cosheaves on a poset, thus recovering the generality of the Laplacians arising in \cite{HG19}.
As we will not need this, we shall not pursue this point of view in this work.
\end{remark}

\subsection{Cellular sheaves}\label{sec:cellsh}
For the sake of completeness, we recall here the definition of cellular sheaves (as in \cite{curry}, for example), even though we will not use it in the paper.
It is well known that a CW complex, under suitable assumptions, gives rise to a characteristic poset.
We briefly recap some basic definitions and notions.

\medskip

\begin{definition}
A \emph{regular cell complex} is a locally finite regular CW complex. In other words, a regular cell
complex is the datum of a topological space $X$ together with a a partition of $X$ into subspaces
$\lbrace X_\alpha\rbrace_{\alpha\in \textbf{P}_X}$ such that:
\begin{itemize}
\item[1)] for every $x\in X$, every sufficiently small neighbourhood of $x$ intersects only finitely many $X_\alpha$,
\item[2)] for every $\alpha,\beta\in \textbf{P}_X$, $\overline{X}_\alpha\cap X_\beta\neq\emptyset$ if and only if
$X_\beta\subseteq \overline{X}_\alpha$ (incident cells),
\item[3)] every $X_\alpha$ is homeomorphic to 
  some $\R^{n_\alpha}$,
\item[4)] for every $\alpha\in \textbf{P}_X$, there is an homeomorhism $\overline{\mathrm{D}^{n_\alpha}}\xrightarrow{\cong} \overline{X_\alpha}$ mapping the open disk $\mathrm{D}^{n_\alpha}$ homeomorphically to $X_\alpha$.
\end{itemize}
The set $\textbf{P}_X$ is a poset (order defined by the inclusion of the subspaces),
called the \emph{cell poset} of $X$.
\end{definition}

\begin{remark}
It can be proven \cite{bj} that a regular cell complex is uniquely determined by the combinatorics
of its cell poset. Moreover, any CW poset (a poset enjoying certain combinatorial
properties, see \cite{bj})
give rise to a regular cell complex.
\end{remark}

If the complex is finite, the dimension of a regular cell complex $X$ is
defined to be the dimension of its
associated poset $\textbf{P}_X$.

\begin{remark}
Regular cell complexes of dimension 1 can be identified with finite undirected graphs.
In this case, the cell poset contains all the information necessary to recover
the whole graph structure.
This is not true in the case of directed graphs, where some additional data is needed.
Indeed, a semisimplicial
set of dimension 1 is fully recovered by its category of simplices,
that has a richer structure than its
associated poset (that is unable to distinguish between heads and tails).
\end{remark}

We recall a key definition  (see \cite{curry}
and refs therein).

\begin{definition}
A \textit{cellular sheaf} $\cF$ valued in a category $\cC$ on a regular cell complex $X$
is a functor $\cF : \textbf{P}_X \lra \cC$, i.e. it is assigning to each cell $X_\alpha$ in X an object $\cF(\alpha) \in \cC$
and to every pair of incident cells $X_\al \subset  X_\be$ a restriction map
$\rho_{\alpha,\beta}: F(\alpha) \lra F(\beta)$.
A \textit{cellular cosheaf} $\widehat{\cF}$ is a functor $\cF : \textbf{P}_X^{op} \lra \cC$ assigning to each cell
$X_\alpha$ in X an object $\widehat{\cF}(\alpha)\in \cC$
and to every pair of incident cells $X_\al \subset  X_\be$ an extension map
$\epsilon_{\alpha,\beta}: F(\beta) \lra F(\alpha)$.
\end{definition}
This is in agreement with our treatment of sheaves on a poset, i.e. on
$A(\textbf{P}_X)$. We do not pursue this point of view further: see 
\cite{curry, HG19} (and refs. therein) for a more comprehensive development of such theory.

\section{Differential operators and bundles on graphs}\label{lapl-sec}
In this section we introduce differential operators on geometric structures
related to graphs. We make the blanket assumption, unless otherwise stated, that all the graphs in this section
have one self-loop for each vertex. We abuse notation and
we denote as $\diGraphs_{\leq 1}$ the category of all finite digraphs having at most
one edge for each pair of ordered vertices $x \neq y$ and exactly 
one edge, called a \textit{self loop} for each pair $(x,x)$. Similarly we denote with $\diGraphs$
the category of all finite digraphs having exactly one self loop for each vertex.

\subsection{First Order Differential Calculi and Graphs}
We first recap how to define a first order differential
calculus (FODC) on the algebra of functions on a finite set based on the work \cite{dimakis},
see \cite{bm} for more details.
We will denote as $\mathrm{(FODC)}$ the category of first order differential calculi.
We denote for brevity $V:=V_G$ and $E:=E_G$. Notice that in $\diGraphs_{\leq 1}$
we have $i:E \lra V \times V$ injective, so we identify $E$ with the
corresponding
subset of $V \times V$ and we denote an edge $(x,y)$ also as $x \to y$;
we assume $x \neq y$ unless otherwise stated.

We denote as $A:=\bk[V]$, or as $A_G$ for $G=(V,E)$, the vector space of all functions
$V\rightarrow\bk$. Note that:  
\begin{equation}\label{fin-alg}
A=\bk[V]=\mathrm{span}\{\delta_x\ |\ x\in V\},
\end{equation}
where $\de_x(y)=1$ if $x=y$ and zero
otherwise. We define a FODC $(\Gamma^1, \extd)$, 
or $(\Gamma^1_G, \extd_G)$, for $G=(V,E)$
as the $A$-bimodule whose underlying vector space is the vector space freely
generated by the set $E$, that is: 
$$
\Gamma^1:=\bk [E] = \mathrm{span}\{ \oomega_{x \to y} \, |\, (x,y) \in E\}
$$
The $A$-bimodule structure is given by:
$$
f\oomega_{x \to y}=f(x)\oomega_{x\to y},\quad \oomega_{x\to y}f=\oomega_{x\to y}f(y),\quad \extd
f=\sum_{x\to y\in E}(f(y)-f(x))\oomega_{x\to y}
$$
for all functions $f\in A$. We define $\extd: A \lra \Gamma^1$ {on generators} as:
\begin{equation}\label{d-def}
\extd\delta_x=\sum_{y: y\to x}\oomega_{y\to x}-\sum_{y:x\to y}
\oomega_{x\to y},\quad \delta_x\extd\delta_y=\begin{cases} -\sum_{z:x\to z}\oomega_{x
\to z}& x=y\\ \oomega_{x\to y}& x\to y\\ 0 &\mathrm{otherwise}\end{cases}
\end{equation}
The fact that $\extd$ is a derivation, i.e. it satisfies the Leibniz identity
is a simple check. Hence $(\Gamma^1, \extd)$ is a FODC on $A$.
This FODC is \textit{inner}, i.e. $\extd a=[\theta,a]$ for all $a\in A$, where
$$
\theta:=\sum_{x\to y\in E}\oomega_{x\to y}
$$
We shall write $\theta_{\Gamma^1}$ for $\theta$, whenever necessary to avoid
confusion.

\begin{remark}
We notice that $\theta$ is a finite analog of the Dirac operator in Connes
spectral triples in noncommutative geometry.
We shall not pursue further this point of view.
\end{remark}

We state the following result,
(see \cite[Ch. 1]{bm} and \cite{dimakis}).

\begin{theorem} \label{eqcatgr}
Let the notation be as above.
Then, we have a fully faithful contravariant
functor 
$$F: \diGraphs_{\leq 1} \lra \mathrm{(FODC)}, \qquad G=(V,E) \mapsto (\Gamma^1,\extd)$$
realizing an 
antiequivalence of categories between $\diGraphs_{\leq 1}$
and the category of FODC $(\Gamma^1, \extd)$ on $\bk$-algebras $A=\bk[V]$, with $V$ finite set,
such that the map
$A\otimes A\rightarrow \Gamma^1$, $a\otimes b \mapsto adb$ is surjective.
\end{theorem}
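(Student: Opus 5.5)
We would prove Theorem \ref{eqcatgr} by building the functor explicitly and then constructing an inverse on the stated subcategory of $\mathrm{(FODC)}$. On objects, $F(G)=(\Gamma^1_G,\extd_G)$ over $A_G=\bk[V]$, as constructed above. On morphisms we first check what a morphism in $\diGraphs_{\leq 1}$ is. With our blanket convention that every vertex carries exactly one self loop, a morphism $\phi:G\to H$ is a map $\phi_0:V_G\to V_H$ such that whenever $x\to y$ is an edge of $G$, either $\phi_0(x)\to\phi_0(y)$ is an edge of $H$ or $\phi_0(x)=\phi_0(y)$, in which case the edge goes to the self loop. Indeed $\phi_1$ is forced by $\phi_0$, because $i$ is injective.

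To $\phi$ we attach the pullback algebra map $\phi^*:A_H\to A_G$, $f\mapsto f\circ\phi_0$. We extend it to $\Gamma^1_H\to\Gamma^1_G$ by the formula
\[
\phi^*(f\,\extd g)=\phi^*(f)\,\extd\phi^*(g).
\]
Contravariant functoriality is then immediate.

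The heart of the argument is that this formula gives a well-defined map of calculi. For that we use the standard description of a surjective calculus as a quotient of the universal one. Write $\Omega^1_{u}(A)=\ker(m:A\otimes A\to A)$, which is spanned by the elements $\delta_x\otimes\delta_y$ with $x\neq y$. Surjectivity of $a\otimes b\mapsto a\,\extd b$ says that every such calculus is $\Omega^1_u(A)/N$ for a sub-bimodule $N$. Since $\delta_x\,\Omega^1_u\,\delta_y$ is one-dimensional, $N$ is spanned by the $\delta_x\otimes\delta_y$ it contains, i.e. by those pairs $(x,y)$ that do not survive. Thus the surjective calculi on $\bk[V]$ correspond bijectively to subsets $E\subseteq V\times V\setminus\Delta$. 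From such a calculus we recover the graph by declaring $x\to y$ an edge exactly when $\delta_x\,\extd\delta_y\neq 0$. Comparing with \eqref{d-def}, the generator $\delta_x\,\extd\delta_y$ corresponds to $\oomega_{x\to y}$, so this recovers the calculus $F(G)$ and gives essential surjectivity.

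For full faithfulness, recall that algebra maps $\bk[V_H]\to\bk[V_G]$ between algebras of functions on finite sets are exactly pullbacks along set maps $V_G\to V_H$. A morphism of calculi is such an algebra map $\phi^*$ together with a compatible bimodule map, and the compatible bimodule map exists if and only if $\phi^*$ sends the submodule $N_H$ into $N_G$. Now
\[
\phi^*(\delta_a\otimes\delta_b)=\sum_{\phi(x)=a,\ \phi(y)=b}\delta_x\otimes\delta_y .
\]
So the inclusion $\phi^*(N_H)\subseteq N_G$ says precisely this: whenever $x\to y$ is an edge of $G$ with $\phi(x)\neq\phi(y)$, the pair $\phi(x)\to\phi(y)$ is an edge of $H$. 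That is exactly the graph morphism condition. Moreover, because of surjectivity, the bimodule map is unique once $\phi^*$ is fixed.

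The main obstacle is keeping track of the diagonal. One has to check that edges collapsed by $\phi$, i.e. those with $\phi(x)=\phi(y)$, cause no trouble: the pulled-back terms then lie on the diagonal, where $m$ is not zero. This is exactly what the self-loop convention is designed to absorb, and we would verify it carefully using the formula for $\delta_x\,\extd\delta_x$ in \eqref{d-def}.
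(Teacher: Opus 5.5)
Your argument is correct, but it follows a different and more complete route than the paper. The paper defines $F$ on morphisms by the explicit formula \eqref{phi*} and checks compatibility with $\extd$ using the innerness of both calculi. It then defers full faithfulness and the description of the essential image to \cite{bm} and \cite{dimakis}.

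You derive everything from the presentation $\Gamma^1=\Omega^1_u(A)/N$:
\begin{itemize}
\item The decomposition $N=\bigoplus_{x,y}\delta_xN\delta_y$ into pieces of dimension at most one classifies surjective calculi by edge sets.
\item The criterion $(\phi^*\otimes\phi^*)(N_H)\subseteq N_G$ characterises morphisms, giving existence and uniqueness of the bimodule map at once.
\item The paper's formula \eqref{phi*} drops out of your computation of $(\phi^*\otimes\phi^*)(\delta_a\otimes\delta_b)$ modulo $N_G$.
\end{itemize}

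Your route also handles morphisms that collapse an edge onto a self loop without extra work. The paper's shortcut does not quite manage this. Its reduction to $\phi_*(\theta_{\Gamma^1_H})=\theta_{\Gamma^1_G}$ (direction corrected) fails as soon as $\phi(w)=\phi(z)$ for some edge $w\to z$ of $G$. In that case $\phi_*(\theta_{\Gamma^1_H})$ omits the terms $\oomega_{w\to z}$. What actually holds, and suffices, is that the difference commutes with $\phi^*(A_H)$. In exchange, the paper's approach writes the morphism directly on the edge bases, which is the form used later.

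Your final paragraph needs correcting: the obstacle you anticipate does not arise in your own setup.
\begin{itemize}
\item Since $\phi^*$ is an algebra map, $\phi^*\otimes\phi^*$ maps $\ker m_H$ into $\ker m_G$.
\item For $a\neq b$, the element $(\phi^*\otimes\phi^*)(\delta_a\otimes\delta_b)$ involves only pairs $(x,y)$ with $\phi(x)=a\neq b=\phi(y)$. So nothing lands on the diagonal.
\item Edges $x\to y$ of $G$ with $\phi(x)=\phi(y)$ are simply not in the image of $\phi_*$ and impose no condition on $\phi$. No computation with $\delta_x\extd\delta_x$ is needed.
\end{itemize}
The self-loop convention enters only on the graph side, where it makes such collapsing vertex maps morphisms of $\diGraphs_{\leq 1}$. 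You already used it there in your first paragraph.

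You are also tacitly, and correctly, taking algebra maps to be unital. Without this, the zero map would give morphisms of calculi that come from no graph morphism, and full faithfulness would fail.
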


We briefly comment on the definition of $F$ on morphisms and the proof of Thm \ref{eqcatgr}.
Any arrow $\phi:G\lra H$ in $\diGraphs_{\leq 1}$
will induce the following map: {$\phi^*:A_H \lra A_G$, $\phi^*(a)(v)=a(\phi_{V}(v))$} and 
\begin{equation}\label{phi*}
\phi_*:\Gamma^1_H \to \Gamma^1_G,\quad
\phi_*(\oomega_{x\to y})=\sum_{ w\to z \atop  w\in \phi_{V}^{-1}(x),\ z\in\phi_V^{-1}(y)}\oomega_{w\to z}
\end{equation}
where $\phi_V:V_G\lra V_H$ and $\phi$
is a pair, $\phi=(\phi_V:V_G \lra V_H,\phi_E:E_G \lra E_H)$.
One can check that $\phi^*$ is a bimodule map,
moreover $\phi_*(\extd a)=\extd\phi^*(a)$,
$a \in A_H$ i.e. this
is an arrow in (FODC).
Since both calculi are inner to prove this it suffices to show that
$\phi_*(\theta_{\Gamma^1_G})=\theta_{\Gamma^1_H}$.

\begin{definition}\label{def:fullyconnected} Given a finite set $V$,
  we define the \textit{fully connected} or
\textit{complete} directed graph {$G \in \diGraphs_{\leq 1}$} as the graph having for vertices the set $V$ and as edges
one edge in each direction for every pair of vertices. Finally, as always,
we have for each vertex exactly one self loop.
\end{definition}
\begin{observation}\label{obs:univcalculus}
The equivalence of categories in {Theorem \ref{eqcatgr} prescribes} that we have a FODC associated with the fully connected graph having $V$ as set of vertices. We shall call the resulting FODC the \textit{universal FODC} and denote as $\Omega_V^1$. Any FODC on $A=\bk[V]$ is a quotient of the universal FODC (see \cite{bm} for more details).
\end{observation}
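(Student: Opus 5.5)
The plan is to identify the FODC of the fully connected graph on $V$ with the standard universal calculus $\Omega^1_{univ}:=\ker(m:A\otimes A\to A)$, with differential $\extd_u a=1\otimes a-a\otimes 1$. Then I would show that every FODC whose map $A\otimes A\to\Gamma^1$ is surjective is a quotient of it.

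\emph{Step 1 (identification).} Since $A=\bk[V]$ has basis $\{\delta_x\}$ of orthogonal idempotents, $A\otimes A$ has basis $\{\delta_x\otimes\delta_y\}$. Moreover $m(\delta_x\otimes\delta_y)=\delta_{x,y}\delta_x$. Hence $\ker m$ has basis $\{\delta_x\otimes\delta_y : x\neq y\}$.

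The assignment $\oomega_{x\to y}\mapsto\delta_x\otimes\delta_y$ is then a linear isomorphism from $\Gamma^1$ of the complete graph of Definition \ref{def:fullyconnected} onto $\ker m$. It respects the bimodule structures, since $f(\delta_x\otimes\delta_y)=f(x)\,\delta_x\otimes\delta_y$ and $(\delta_x\otimes\delta_y)f=\delta_x\otimes\delta_y\,f(y)$.

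For the differential, write $1=\sum_y\delta_y$. Then
$1\otimes\delta_x-\delta_x\otimes1=\sum_{y}\delta_y\otimes\delta_x-\sum_y\delta_x\otimes\delta_y$. The diagonal terms $y=x$ cancel, leaving exactly formula (\ref{d-def}). So the FODC attached to the complete graph by Theorem \ref{eqcatgr} is the universal one, $\Omega^1_V$.

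\emph{Step 2 (quotient map).} Let $(\Gamma^1,\extd)$ be any FODC on $A$ with $A\otimes A\to\Gamma^1$ surjective. Define $\pi:\Omega^1_V\to\Gamma^1$ by $\sum a_i\otimes b_i\mapsto\sum a_i\,\extd b_i$.

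\begin{itemize}
\item \emph{Left $A$-linearity} is clear from the formula.
\item \emph{Right $A$-linearity} uses the Leibniz rule. For $\sum a_ib_i=0$ we have
$\sum a_i(\extd b_i)c=\sum a_i\extd(b_ic)-\big(\sum a_ib_i\big)\extd c=\sum a_i\extd(b_ic)$.
\item \emph{Compatibility with $\extd$}: $\pi(1\otimes a-a\otimes1)=\extd a-a\,\extd 1=\extd a$.
\item \emph{Surjectivity}: every generator $a\,\extd b$ equals $\pi(a\otimes b-ab\otimes1)$, and $a\otimes b-ab\otimes 1\in\ker m$. By hypothesis these generators span $\Gamma^1$, so $\pi$ is onto.
\end{itemize}

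Thus $\Gamma^1\cong\Omega^1_V/\ker\pi$, with $\ker\pi$ a sub-bimodule.

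\emph{Step 3 (graph picture).} For $\Gamma^1=\Gamma^1_G$ with $G\in\diGraphs_{\leq1}$ on vertex set $V$, I would check that $\pi$ coincides with $\phi_*$ of (\ref{phi*}), where $\phi:G\hookrightarrow$ (complete graph) is the inclusion. Concretely, $\pi(\delta_x\otimes\delta_y)=\delta_x\extd\delta_y$, and by (\ref{d-def}) this is $\oomega_{x\to y}$ if $x\to y\in E$ and $0$ otherwise. So $\ker\pi$ is spanned by the $\oomega_{x\to y}$ with $x\to y\notin E$.

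\emph{Main obstacle.} The only point needing care is the right-linearity and well-definedness of $\pi$ on $\ker m$ rather than on all of $A\otimes A$. This is handled by the Leibniz computation in Step 2. Everything else is a direct basis computation.
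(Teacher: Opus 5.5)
Your proof is correct. It takes a different, more self-contained route than the one the paper suggests.

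The paper gives no argument of its own. It defines $\Omega^1_V$ as the image of the complete graph under the functor of Theorem~\ref{eqcatgr} and defers the quotient claim to \cite{bm}. The route its text suggests (cf.\ $\Gamma^1_G=\Omega^1_V/I$ in Section~\ref{2-forms}) is graph-theoretic. By the essential surjectivity in Theorem~\ref{eqcatgr}, every FODC on $\bk[V]$ with $A\otimes A\to\Gamma^1$ onto is some $\Gamma^1_G$. Here $G$ is a subgraph of the complete graph on $V$, and the inclusion induces via (\ref{phi*}) the surjection $\Omega^1_V\to\Gamma^1_G$ killing the $\oomega_{x\to y}$ with $x\to y\notin E$.

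You instead identify $\Omega^1_V$ with $\ker m$ and run the Beggs--Majid universal-property argument, which is what the citation points to. You recover the graph picture only in Step~3. Your route buys three things:
\begin{itemize}
\item it does not lean on Theorem~\ref{eqcatgr}, whose proof the paper omits;
\item Step~2 works verbatim for any algebra;
\item it justifies the word ``universal'': $\pi$ is the unique FODC morphism, since the elements $a\,\extd_u b=a\otimes b-ab\otimes 1$ span $\ker m$.
\end{itemize}

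The graph route gives the explicit shape of the kernel directly. On your side, for an arbitrary FODC, you would get it by noting that any sub-bimodule of $\Omega^1_V$ is spanned by the $\oomega_{x\to y}$ it contains. This holds because $\delta_x\xi\delta_y$ extracts the $\oomega_{x\to y}$-component of $\xi$.

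Two minor points. First, Step~1 tacitly uses the paper's convention that self-loops contribute no $1$-forms; otherwise $\Gamma^1$ of the complete graph would not match $\ker m$. Second, your surjectivity hypothesis is genuinely needed: it is part of the definition in \cite{bm} and of Theorem~\ref{eqcatgr}. The multigraph calculi of Definition~\ref{fodc-gen-def} violate it and are not quotients of $\Omega^1_V$. For example, two parallel edges $v\to w$ give $\dim\delta_v\Gamma^1\delta_w=2$, whereas any bimodule quotient of $\Omega^1_V$ has this dimension at most $1$.
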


\subsection{First order differential calculi and \'etale coverings of graphs}
Let us now generalize the treatment, for applications in
the next sections.

\begin{definition}
Let $G \in \diGraphs$. We say that the \'etale cover $\phi:H \lra G$ is an
\textit{\'etale directed cover} if
\begin{enumerate}
\item $H$ is a disjoint union of graphs in $\diGraphs_{\leq 1}$.
\item The arrow $\phi_E:E_H \lra E_G$ induced by $\phi$ is bijective when restricted to non self-loops.
\end{enumerate}
Notice the second part of condition (2) is necessary since $H$, as $G$, has one self loop at each vertex.
Clearly, given $G$, such $\phi$ is not unique, however it is immediate
to check that one always exists.
\end{definition}

\begin{definition}\label{fodc-etale}
Let $\phi:H\lra G$ be an \'etale directed cover of $G\in \diGraphs$,
$H=\coprod H_i$, $H_i \in \Graphs_{\leq 1}$. We
define $(\Gamma^1, \extd)$ of the pair $(G,\phi)$ as follows.

Let $(\Gamma^1_i,d_i)$ be the FODC associated to $H_i$,
we define $\Gamma^1:=\oplus \Gamma^1_i$ and 
$$
\extd:\bk[V_G] \lra \Gamma^1
\qquad \extd a:=\sum_i \extd_i(\phi^*(a)_{|V_{H_i}})
$$
where  $\phi^*:\bk[G] \lra \bk[H]$ is induced by $\phi$ as above.
\end{definition}
\begin{proposition}\label{fodc-gen}
Let $\phi:H\lra G$ be an \'etale directed cover of $G\in \diGraphs$ and $(\Gamma^1, \extd)$
as above. Then $(\Gamma^1, \extd)$ is a FODC on $A_G:=\bk[V_G]$, with bimodule structure:
$$
a \cdot \oomega_{x \to y} \cdot b := a(\phi(x))b(\phi(y))\oomega_{x \to y}
$$
Moreover this FODC is independent from the chosen \'etale direct cover.
\end{proposition}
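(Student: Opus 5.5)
The plan is to verify the FODC axioms directly and then identify the resulting calculus intrinsically in terms of $G$; this identification gives independence of the cover.

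\emph{Bimodule structure.} Write $A_H=\bk[V_H]=\oplus_i \bk[V_{H_i}]$. Each $\Gamma^1_i$ is an $A_{H_i}$-bimodule, so $\Gamma^1=\oplus_i\Gamma^1_i$ is an $A_H$-bimodule. Restricting scalars along the algebra map $\phi^*:A_G\to A_H$ gives an $A_G$-bimodule. On generators this is exactly
\[
a\cdot\oomega_{x\to y}\cdot b=\phi^*(a)(x)\,\phi^*(b)(y)\,\oomega_{x\to y}=a(\phi(x))\,b(\phi(y))\,\oomega_{x\to y}.
\]
So the bimodule axioms come for free.

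\emph{Leibniz rule.} Each $\extd_i$ is a derivation on $A_{H_i}$. Restriction to $V_{H_i}$ is an algebra map, so $a\mapsto \extd_i(\phi^*(a)_{|V_{H_i}})$ is a derivation $A_G\to\Gamma^1_i$ for the restricted structure. Concretely,
\[
\extd_i(\phi^*(ab)_{|})=\phi^*(a)_{|}\,\extd_i\phi^*(b)_{|}+\extd_i(\phi^*(a)_{|})\,\phi^*(b)_{|},
\]
and the left and right actions of $\phi^*(a)_{|}$ agree with the $A_G$-actions $a\cdot$ and $\cdot b$. Summing over $i$, the sum of derivations into the summands of a direct sum is a derivation. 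If surjectivity of $A\otimes A\to\Gamma^1$ is required, I will use the fact that $\extd$ is inner. Explicitly, from the formula $\extd f=\sum (f(y)-f(x))\oomega_{x\to y}$ one gets
\[
\extd a=[\theta,a],\qquad \theta=\sum_{x\to y\in E_H}\oomega_{x\to y},
\]
with $a$ acting through $\phi^*$.

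\emph{Independence of the cover.} This is the main step. I will construct a map
\[
\Gamma^1\to \bk[E_G^{\circ}],\qquad \oomega_{x\to y}\mapsto \oomega_{\phi_E(x\to y)},
\]
where $E_G^{\circ}$ denotes the non-self-loop edges of $G$. By condition (2) of an étale directed cover this map is a linear bijection. On the target I put the bimodule structure $a\,\oomega_e\,b=a(t(e))\,b(h(e))\,\oomega_e$ and the differential
\[
\extd a=\sum_{e}\bigl(a(h(e))-a(t(e))\bigr)\oomega_e .
\]
Since $\phi$ is a graph map, $\phi(x)=t(\phi_E(x\to y))$ and $\phi(y)=h(\phi_E(x\to y))$. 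Therefore the bijection intertwines the bimodule structures and the differentials. Self-loops contribute nothing to $\extd$, as $f(x)-f(x)=0$. Hence any two étale directed covers give calculi isomorphic, as FODCs, to this intrinsic one.

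The obstacle I expect is the bookkeeping around self-loops, which are not bijectively matched, and the convention in \eqref{d-def} that the diagonal term $\delta_x\extd\delta_x$ involves only non-loop edges. I will check that the loop generators $\oomega_{x\to x}$ play no role, or else that they are excluded from $\Gamma^1$, matching the paper's conventions.
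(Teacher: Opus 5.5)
Your proof is correct and follows the paper's route. Both arguments check the bimodule and Leibniz axioms directly and obtain independence from the edge bijection in condition (2). The paper compares two covers directly via $\psi:\Gamma^1\to\Gamma^{1'}$. That map is just your isomorphism to $\bk[E_G^\circ]$ for $\phi$ followed by the inverse of the one for $\phi'$. Your version has the advantage of naming the bijection explicitly: the paper only records the equality of cardinalities $|\phi_E^{-1}(E_G)|=|(\phi')_E^{-1}(E_G)|$, which by itself would not give a bimodule isomorphism.

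Two of your side remarks should be fixed.

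First, innerness does not imply surjectivity of $A_G\otimes A_G\to\Gamma^1$. In fact surjectivity fails whenever $G$ has parallel edges, because the coefficient of $\omega_{x\to y}$ in $a\,\extd b$ depends only on $(\phi(x),\phi(y))$. In the paper's example, $\delta_v\extd\delta_w=\omega_{v_1\to w_1}+\omega_{v_2\to w_2}$, and $\omega_{v_1\to w_1}$ alone never lies in $A_G\,\extd A_G$. So surjectivity is not part of the notion of FODC in this statement (compare Theorem \ref{thm:digraphfodc}); simply drop that sentence.

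Second, the self-loop question must be settled by exclusion, not by the loops ``playing no role''. If the $\omega_{x\to x}$ were generators, $\dim\Gamma^1$ would grow by $|V_H|$, and $|V_H|$ differs between \'etale directed covers of the same $G$. For the paper's example graph, one cover has $6$ vertices and another has $4$, so independence would then be false. Your bijection $\Gamma^1\to\bk[E_G^\circ]$ already presupposes exclusion. This is the paper's convention, as its example confirms: there $\Gamma^1_G$ is spanned by three non-loop forms.
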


\begin{proof} The fact  $(\Gamma^1, \extd)$ is a FODC on $A_G$ is a direct check.
For the last statement, let $(\Gamma^{1'},\extd')$ be a FODC on $A_G$ obtained
with a different \'etale direct cover $\phi':H' \to G$. Notice that, by the very definition
of \'etale directed cover,
$$
|\phi_E^{-1}(E_G)|=|(\phi')_E^{-1}(E_G)|
$$
Hence we obtain a linear isomorphism $\psi:\Gamma^1 \to \Gamma^{1'}$ that one can
check to be also a bimodule isomorphism. We see immediately also that:
$$
\extd'(a)=\psi \extd(a)
$$
\end{proof}

\begin{example}
Consider the graph $G$ and its \'etale covering 
$$f:H=H_1 \amalg H_2 \amalg H_3 \longrightarrow G$$
with $G,H \in \diGraphs$ (self-loops are not depicted, but we have exactly one
at each vertex):
\begin{figure}[h]
\begin{center}
\begin{tikzpicture}[scale=.45]

\node at (-5,0) {$G$};
\node (v) at (-2,0) {$\bullet$};
\node (w) at (2,0) {$\bullet$};
\node at (-2.8,0) {$v$};
\node at (2.8,0) {$w$};
\node at (0,1.9) {$e_1$};
\node at (0,0.1) {$e_2$};
\node at (0,-1.3) {$e_3$};

\draw [->] (w) to [bend left] (v);
\draw [->] (v) to [bend left] (w);
\draw [->,out=60,in=120] (v) to (w);

\node at (15,2) {$H_1$};
\node at (15,0) {$H_2$};
\node at (15,-2) {$H_3$};
\node (v1) at (8,2) {$\bullet$};
\node (w1) at (11,2) {$\bullet$};
\node (v2) at (8,0) {$\bullet$};
\node (w2) at (11,0) {$\bullet$};
\node (v3) at (8,-2) {$\bullet$};
\node (w3) at (11,-2) {$\bullet$};
\node at (7,2) {$v_1$};
\node at (12,2) {$w_1$};
\node at (7,0) {$v_2$};
\node at (12,0) {$w_2$};
\node at (7,-2) {$v_3$};
\node at (12,-2) {$w_3$};
\node at (9.5,-1.3) {$e_3'$};
\node at (9.5,0.7) {$e_2'$};
\node at (9.5,2.7) {$e_1'$};

\draw [->] (v1) edge (w1);
\draw [->] (v2) edge (w2);
\draw [->] (w3) edge (v3);

\end{tikzpicture}
\end{center}
\end{figure}

We have that
$\Gamma_G^1:=\mathrm{span}\{\omega_{v_1 \to w_1}, \omega_{v_2 \to w_2}, \omega_{w_3 \to v_3}\}$,
and
$$
\extd(a)=\extd_1(f^*(a)_{|V_{H_1}})+\extd_2(f^*(a)_{|V_{H_2}})+\extd_3(f^*(a)_{|V_{H_3}}) \in \Gamma^1=\Gamma^1_{H_1}\oplus\Gamma^1_{H_2}\oplus\Gamma^1_{H_3}
$$
So, for example, if $a=\de_v$, $\extd(\de_v)=\extd_1(\de_{v_1})+\extd_2(\de_{v_2})+\extd_3(\de_{v_3})$.
\end{example}

\begin{definition}\label{fodc-gen-def}
Let $G=(V,E) \in \diGraphs$. We call $(\Gamma^1_G,\extd)$ as in Prop. \ref{fodc-gen}
a FODC on $A_G$ associated to $G$.
\end{definition}

Notice that this FODC is inner, with $\theta=\sum_{u \to v \in E_H} \oomega_{u\to v}$
for $H=(V_H, E_H)\in \diGraphs$
as in Prop. \ref{fodc-gen}.

\begin{theorem}\label{thm:digraphfodc}
The contravariant functor
$$
F: \diGraphs \lra \mathrm{(FODC)_e}, \quad G  \mapsto (\Gamma^1_G,\extd) 
$$
is {faithful and essentially surjective},
where $\mathrm{(FODC)_e}$ consists of all the FODC $(\Gamma^1, \extd)$ 
such that
\begin{itemize}
\item $A=\bk[V]$ with $V$ any finite set,
\item $\extd:A \lra \Gamma^1$ is the composition $\extd=\extd'\circ\varphi$ where
$\extd'$ is a FODC isomorphic to a FODC $(d':\bk[V']\rightarrow{\Gamma^1}')$
in the essential image of the functor considered in Theorem \ref{eqcatgr} and $\varphi:A\rightarrow\bk[V']$
is a $\bk$-algebra morphism.
\end{itemize}

\end{theorem}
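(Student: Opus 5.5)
The plan is to establish two things: first that $F$ is well defined on objects and morphisms and lands in $\mathrm{(FODC)_e}$; then faithfulness; and finally essential surjectivity.

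\textbf{Well-definedness.} For $G\in\diGraphs$ we fix an \'etale directed cover $\phi:H=\coprod H_i\to G$ with $H_i\in\diGraphs_{\leq 1}$.

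By Prop. \ref{fodc-gen}, $(\Gamma^1_G,\extd)$ is independent of $\phi$ up to isomorphism. It factors as $\extd=\extd_H\circ\phi^*$, where:
\begin{itemize}
\item $\extd_H=\oplus_i\extd_i$ is the FODC on $\bk[V_H]=\prod_i\bk[V_{H_i}]$ attached by Theorem \ref{eqcatgr} to the graph $H\in\diGraphs_{\leq1}$;
\item $\phi^*:\bk[V_G]\to\bk[V_H]$ is an algebra morphism.
\end{itemize}
Here we use that a disjoint union of graphs in $\diGraphs_{\leq 1}$ is still in $\diGraphs_{\leq 1}$, and that the FODC of a disjoint union is the direct sum of the FODCs. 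Hence $F(G)\in\mathrm{(FODC)_e}$.

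On a morphism $\psi:G\to G'$ we lift to the covers. An edge of $G'$ corresponds to a unique edge of $H'$, and an edge of $G$ to a unique edge of $H$. We then define $\psi_*:\Gamma^1_{G'}\to\Gamma^1_G$ by the formula (\ref{phi*}), summing over the edges of $H$ lying over edges of $G$ that $\psi$ sends to the given edge. Compatibility with $\extd$ reduces, exactly as in the discussion after Theorem \ref{eqcatgr}, to the identity $\psi_*\theta_{G'}=\theta_G$, which holds because both calculi are inner. Functoriality is then a direct check.

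\textbf{Faithfulness.} Let $\psi,\psi'$ be two morphisms with $F(\psi)=F(\psi')$.
\begin{itemize}
\item The algebra part $\psi^*:\bk[V_{G'}]\to\bk[V_G]$ determines $\psi_V$, since $\psi^*(\delta_x)=\delta_{\psi_V^{-1}(x)}$.
\item For the edge part, $\psi_*(\omega_e)$ is the sum of the generators $\omega_{e'}$ over the edges $e'\in\psi_E^{-1}(e)$. As $e$ ranges over the edges of $G'$, these preimage sets partition the non-loop edges of $G$, so they recover $\psi_E$ on non-loops.
\item On self-loops, $\psi_E$ is forced by $\psi_V$, because each vertex carries exactly one loop.
\end{itemize}
Hence $\psi=\psi'$.

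\textbf{Essential surjectivity.} Take $(\Gamma^1,\extd)$ with $\extd=\extd'\circ\varphi$, where $\extd'$ is isomorphic to the FODC of some $H\in\diGraphs_{\leq1}$ with $V_H=V'$, and $\varphi:\bk[V]\to\bk[V']$ is an algebra map. An algebra map between finite products of copies of $\bk$ is of the form $\varphi=f^*$ for a unique set map $f:V'\to V$. We then define the graph $G$ by:
\begin{itemize}
\item vertices $V_G:=V$;
\item one non-loop edge $f(x)\to f(y)$ for each non-loop edge $x\to y$ of $H$;
\item one self-loop at each vertex.
\end{itemize}
The map $f$ extends to a morphism $\phi:H\to G$ that is bijective on non-loop edges.

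\textbf{Main obstacle.} This is the step I expect to be hardest. The map $\phi$ need not be \'etale with $H$ a disjoint union of graphs in $\diGraphs_{\leq1}$: $f$ may identify vertices inside one connected component of $H$, creating new loops or parallel edges. My first attempt is to replace $H$ by $\tilde H:=\coprod_{e}H_e$, where $e$ runs over the non-loop edges of $H$ and $H_e$ is the single edge $e$ with its endpoints and their loops. This $\tilde H\to G$ is an \'etale directed cover, so Prop. \ref{fodc-gen} gives $F(G)\cong(\Gamma^1_{\tilde H},\extd_{\tilde H}\circ\phi^*)$. I then need to check two things:
\begin{itemize}
\item $\Gamma^1_{\tilde H}$ and $\Gamma^1_H$ have the same basis, indexed by the non-loop edges of $H$;
\item the differentials agree, namely $\extd_{\tilde H}(\phi^* a)=\sum_{x\to y}(a(f(y))-a(f(x)))\,\omega_{x\to y}=\extd'(f^*a)$.
\end{itemize}
The second identity holds because the differential of an inner calculus is computed edge by edge. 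The bimodule structures also match, since both are given by $a\cdot\omega_{x\to y}\cdot b=a(f(x))\,b(f(y))\,\omega_{x\to y}$. This produces the required isomorphism $F(G)\cong(\Gamma^1,\extd)$.
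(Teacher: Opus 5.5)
\paragraph{The gap: essential surjectivity when $f$ collapses an edge.} The problem sits exactly at the obstacle you flagged, and passing to $\tilde H$ does not address it. Suppose $f(x)=f(y)=w$ for some non-loop edge $x\to y$ of $H$. Your recipe then asks for a non-loop edge $w\to w$ in $G$, which cannot exist: objects of $\diGraphs$ in this section have exactly one edge from $w$ to $w$, the loop $\ell_w$. Nor is $H_e\to G$ \'etale in that case, since both $x\to y$ and $\ell_x$ must be sent to $\ell_w$.

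This case cannot be repaired, because such objects are not in the essential image of $F$ at all.
\begin{itemize}
\item For every $G$, $\Gamma^1_G$ has a basis $\omega_e$ indexed by non-loop edges, which satisfy $t(e)\neq h(e)$, and $a\cdot\omega_e\cdot b=a(t(e))\,b(h(e))\,\omega_e$. Hence $\delta_u\Gamma^1_G\delta_u=0$ for every vertex $u$.
\item An isomorphism of FODCs permutes the idempotents $\delta_u$, so this vanishing is an isomorphism invariant.
\item In $(\Gamma^1_H,\extd_H\circ f^*)$, however, $\delta_w\,\omega_{x\to y}\,\delta_w=\omega_{x\to y}\neq 0$.
\end{itemize}
Concretely, take $V=\{*\}$, $H$ the complete digraph on $\{1,2\}$, and $\varphi:\bk\to\bk[\{1,2\}]$ the (injective) unit map. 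This gives the object $(\bk\,\omega_{1\to2}\oplus\bk\,\omega_{2\to1},\,0)$ of $\mathrm{(FODC)_e}$. FODCs here are not required to satisfy the surjectivity condition: Theorem~\ref{eqcatgr} imposes it separately, and calculi of multigraphs violate it. Yet the only one-vertex object of $\diGraphs$ is a single loop, with $\Gamma^1=0$. So the statement, read literally, is false, and no argument can close this case.

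\paragraph{What your construction does prove.} It gives essential surjectivity onto the full subcategory where $\delta_u\Gamma^1\delta_u=0$ for all $u$, equivalently where $f$ separates the endpoints of every edge of $H$. Two remarks on that case:
\begin{itemize}
\item Parallel edges are harmless. $H\to G$ is then already \'etale, because at each vertex the loop goes to a loop and the non-loop edges go injectively to non-loop edges. So $\tilde H$ is unnecessary.
\item You do need a one-vertex component for each vertex of $G$ outside $f(V')$ (for your $\tilde H$: each vertex not incident to a non-loop edge). Otherwise the map is not jointly surjective and Prop.~\ref{fodc-gen} does not apply.
\end{itemize}

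\paragraph{Two smaller, fixable slips.} Both come from the same collapsing phenomenon, this time for morphisms $\psi:G\to G'$ that send a non-loop edge to a loop.
\begin{itemize}
\item The identity $\psi_*\theta_{G'}=\theta_G$ fails, since the $\omega_e$ of such edges are missing on the left. What holds, and suffices, is $[\psi_*\theta_{G'}-\theta_G,\psi^*a]=0$.
\item In your faithfulness argument the preimage sets do not partition all non-loop edges of $G$. But an edge absent from every $\psi_*(\omega_{e'})$ must go to the loop at $\psi_V(t(e))$, so $\psi_E$ is still determined.
\end{itemize}
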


\begin{proof}
The proof
is a check that we leave to the reader.
\end{proof}

\begin{observation}
Let $f:G\rightarrow G'$ be a morphism between digraphs. Let $g=(g_A, g_{\Gamma^1})$ be its image under the functor $F$ of Theorem \ref{thm:digraphfodc}, where $g_A: A_{G'}\rightarrow A_{G}$ and $g_{\Gamma^1}:\Gamma^1_{G'}\rightarrow\Gamma^1_G$. Then $g_{\Gamma^1}$ satisfies the following condition: its associated matrix with respect to the choice of the bases on $\Gamma^1_{G'}$ and $\Gamma^1_G$ given by the edges of $G'$ and $G$ (see Def. \ref{fodc-etale}) has entries only equal to $0$ or $1$ and
each row has just one non zero entry.
This property characterizes the morphisms in the image of $F$.
\end{observation}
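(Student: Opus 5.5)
The plan is to make the functor $F$ of Theorem \ref{thm:digraphfodc} explicit on morphisms, using étale directed covers, and read off the matrix of $g_{\Gamma^1}$. Let $f=(f_V,f_E):G\to G'$ and choose étale directed covers $\phi:H\to G$ and $\phi':H'\to G'$. By Proposition \ref{fodc-gen} the resulting calculi do not depend on these choices. By Definition \ref{fodc-etale}, $\Gamma^1_G$ has the basis $\{\oomega_e\}$ indexed by the edges of $G$. This uses that the non-self-loop edges of $H$ are in bijection with those of $G$, and that each vertex carries exactly one self loop. The analogous statement holds for $\Gamma^1_{G'}$.

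The multigraph analogue of formula (\ref{phi*}) should be
$$g_{\Gamma^1}(\oomega_{e'})=\sum_{e\in f_E^{-1}(e')}\oomega_e .$$
I would verify this in two steps. First check that it is a bimodule map over $g_A=f_V^*$, where the bimodule structure is the one in Proposition \ref{fodc-gen}. Then check that it intertwines the differentials. Since both calculi are inner, the second check reduces to $g_{\Gamma^1}(\theta_{G'})=\theta_G$, and this holds because every edge of $G$ lies in exactly one fibre $f_E^{-1}(e')$.

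A self loop at $v$ has head and tail both equal to $v$. Its image under $f$ therefore has head and tail equal to $f_V(v)$, so it must be the unique self loop at $f_V(v)$. Hence loops go to loops, and the formula holds uniformly. The matrix entry in row $e$ and column $e'$ is $1$ exactly when $f_E(e)=e'$, and $0$ otherwise. Each row thus has a single nonzero entry, equal to $1$. This proves the first claim.

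For the characterization, take a morphism $(g_A,g_{\Gamma^1})$ of $\mathrm{(FODC)_e}$ between calculi of graphs $G'$ and $G$, and suppose its matrix has the stated shape. I will build a graph morphism $f$ with $F(f)=(g_A,g_{\Gamma^1})$ in four steps.

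\begin{enumerate}
\item A $\bk$-algebra map $g_A:\bk[V_{G'}]\to\bk[V_G]$ sends the orthogonal idempotents $\delta_{x'}$ to orthogonal idempotents summing to $1$. This partitions $V_G$, so $g_A=f_V^*$ for a unique set map $f_V$.
\item The row condition defines a function $f_E:E_G\to E_{G'}$, namely $e$ goes to the unique column with entry $1$ in row $e$.
\item Apply bimodule compatibility, $g_{\Gamma^1}(\delta_{x'}\oomega_{e'}\delta_{y'})=f_V^*(\delta_{x'})\,g_{\Gamma^1}(\oomega_{e'})\,f_V^*(\delta_{y'})$, to the edges in the preimage of $e'$. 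This forces $t(f_E(e))=f_V(t(e))$ and $h(f_E(e))=f_V(h(e))$, so $f=(f_V,f_E)$ is a digraph morphism.
\item By construction $F(f)$ and $(g_A,g_{\Gamma^1})$ agree on the bases, so they are equal.
\end{enumerate}

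I expect the main obstacle to be bookkeeping rather than substance. The bimodule conventions (which endpoint corresponds to left versus right multiplication) and the self-loop terms $-\sum_{z}\oomega_{x\to z}$ in $\extd\delta_x$ must be handled consistently. In particular, step 3 must not accidentally allow a non-loop edge to map to a loop, or the reverse. My first approach would be to test the compatibility on the generators $\delta_x\extd\delta_y$ of (\ref{d-def}) and compare the coefficients edge by edge.
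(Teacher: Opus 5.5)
There is a genuine gap: you never treat morphisms that \emph{collapse} an edge. In the category of Section \ref{lapl-sec}, every vertex has one self loop. A non-loop edge $e:x\to y$ of $G$ with $f_V(x)=f_V(y)=u$ must therefore be sent to the self loop at $u$. The loops are there precisely to allow this; it is what makes Theorem \ref{eqcatgr} full, e.g.\ for the unit map $\bk\to\bk[\{x,y\}]$ with $\Gamma^1$-part $0$.

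Self loops, however, carry no $1$-form. $\Gamma^1_G$ has a basis indexed by the \emph{non-loop} edges only, as in the Example after Proposition \ref{fodc-gen}. Loop forms would break the surjectivity of $a\otimes b\mapsto a\,\extd b$ in Theorem \ref{eqcatgr}. In Definition \ref{fodc-etale} they would also be indexed by the vertices of $H$ rather than of $G$. So your step ``loops go to loops, so the formula holds uniformly'' misses the relevant case: the row of a collapsed edge is identically zero. Already for $G$ a single edge $x\to y$, $G'$ a single vertex and $f$ the unique morphism, you get $\Gamma^1_{G'}=0$, $\Gamma^1_G=\bk\,\omega_{x\to y}$ and $g_{\Gamma^1}=0$.

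The same slip affects your check of $\extd$. In general $g_{\Gamma^1}(\theta_{G'})=\theta_G-\sum_{e\ \mathrm{collapsed}}\omega_e\neq\theta_G$, so the reduction to $g_{\Gamma^1}(\theta_{G'})=\theta_G$ fails. Compatibility with $\extd$ still holds, but for a different reason: each collapsed $\omega_e$ commutes with $f_V^*(a)$.

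So the claim can only hold in the form: entries in $\{0,1\}$ and \emph{at most} one nonzero entry per row, with exactly one iff $f$ collapses no edge. Read with ``exactly one'', the characterization of the image of $F$ would exclude every collapsing morphism.

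Your steps 1--4 correctly prove the converse for non-collapsing $f$. But they never use compatibility with $\extd$, and that is exactly what is needed for zero rows. Suppose the row of a non-loop edge $e$ vanishes. Then the $\omega_e$-coefficient of $g_{\Gamma^1}(\extd a)$ is $0$. The $\omega_e$-coefficient of $\extd(f_V^*a)$ is $a(f_V(h(e)))-a(f_V(t(e)))$. Equating them for all $a$ forces $f_V(h(e))=f_V(t(e))$, and you set $f_E(e)$ to be the self loop there.

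In particular, your warning that step 3 ``must not allow a non-loop edge to map to a loop'' is backwards. Non-loop edges may map to loops, and they do so exactly when their row is zero.
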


\subsection{Quantum Differential Calculi and Graphs}\label{2-forms}
In this section we make observations
on higher order differential calculi and graphs
that we shall need in the sequel  (see \cite{dimakis, bm}).

\begin{definition}
Let $G=(V,E) \in \diGraphs_{\leq 1}$.
We define a \textit{triangular clique} as an ordered set of 3 vertices $(x,y,z)$ of $V$, 
denoted as $\omega_{x\to y \to z}$, such that $x\to y$, $y\to z$ and $x\to z$ are edges of $G$ and $x \neq y$, $y \neq z$.
\end{definition}
Notice that we allow $x=z$, so that a triangular clique in this case amounts to
two ordered
vertices $x\neq y$ satisfying the condition of the previous definition.

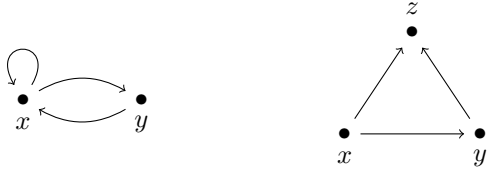
\begin{figure}[h!]
\begin{center}
    \begin{tikzpicture}[scale=.45]
    \node (x) at (-6.5,1) {$\bullet$};
    \node (y) at (-3,1) {$\bullet$};
    \node at (-6.5,0.3) {$x$};
    \node at (-3,0.3) {$y$};
    \draw [->] (x) to [out=30,in=150] (y);
    \draw [->] (y) to [out=-150,in=-30] (x);
    \draw [->] (x) to [out=60,in=120, loop] (x);

    \node (x) at (3,0) {$\bullet$};
    \node (y) at (7,0) {$\bullet$};
    \node (z) at (5,3) {$\bullet$};
    \node at (3,-0.7) {$x$};
    \node at (7,-0.7) {$y$};
    \node at (5,3.7) {$z$};
    \draw [->] (x) to  (y);
    \draw [->] (y) to  (z);
    \draw [->] (x) to  (z);
    \end{tikzpicture}
    \end{center}
    \caption{{The triangular cliques $\omega_{x\to y \to z}$ (right) and $\omega_{x\to y \to x}$ (left).}}\label{tr-cl}
\end{figure}

For a given $G \in \diGraphs_{\leq 1}$, we define $\Gamma_G^2$ as the vector space generated by the triangular cliques
\begin{equation}\label{2forms}
\Gamma_G^2:=\mathrm{span}\{ \oomega_{x \to y \to z} \, | \, x\to y, y \to z ,x\to z\in E, \, x\neq y, y\neq z\}
\end{equation}
$\Gamma_G^2$ is an $A_G$-bimodule:
$$
f\oomega_{x \to y \to z}=f(x)\oomega_{x\to y \to z},\quad \oomega_{x\to y \to z}f=\oomega_{x\to y \to z}f(z), \quad
f\in A_G
$$
In analogy to the continuous setting, we refer to $\Gamma^2_G$ as the space of 2-forms on $A_G$.
\par
We now turn to the special case of $G$ fully connected.
Let $\Omega_V^1$ be the universal FODC as defined in Observation \ref{obs:univcalculus}
and define $\Omega_V^2$ to be the $\bk$-vector space freely generated by the triangular cliques of $G$:
$$
\Omega_V^2:=\mathrm{span}\{ \oomega_{x \to y \to z} \, | \, x,y,z \in V, \, x\neq y, y\neq z\}
$$
We define the \textit{exterior product} as the $\bk$-linear map:
$$
\Omega^1_V \times \Omega^1_V \lra \Omega^2_V, \quad  (\oomega_{x \to y},  \oomega_{w \to z})\mapsto
\oomega_{x \to y}\wedge  \oomega_{w \to z}:=\de_{y,w}\oomega_{x \to y \to z}
$$
where $(\Omega^1_V,\extd_V^0)$ is the FODC associated with
the fully connected graph $G$. 
The exterior product allows us to define the \textit{differential}
as the $\bk$-linear map:
\begin{equation}\label{diff-def2}
\extd_V^1: \Omega^1_V \lra \Omega^2_V \qquad \extd_V^1  \oomega_{x \to y} :=\extd_V^0 \de_x \wedge \extd_V^0 \de_y
\end{equation}

We have the following result, whose proof is left to the reader (see \cite{dimakis}).

\begin{proposition}
The map $\extd_V^1$ satisfies the Leibniz rule:
$$
\extd_V^1(f\omega_{x\lra y})=\extd_V^0 f \wedge \omega_{x\lra y} + f \extd_V^1 \omega_{x\lra y}, \,
\extd_V^1(\omega_{x\lra y}f)=\extd_V^1 \omega_{x\lra y}f - \omega_{x\lra y} \wedge \extd_V^0 f
$$
and $\extd_V^1 \circ \extd_V^0=0$.
We also have the explicit expression: 
\begin{equation}\label{expr-d}
\extd\omega_{x\to y}=\sum_{u \in V} \left(\omega_{u\to x\to y}-\omega_{x\to u\to y}+\omega_{x\to y\to u}\right)
\end{equation}
\end{proposition}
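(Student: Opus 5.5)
The plan is to prove the explicit formula (\ref{expr-d}) first and then derive both remaining claims from it by direct computation on generators.

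\textbf{Step 1: the explicit formula.} By (\ref{d-def}) applied to the fully connected graph we have
$$
\extd_V^0\de_x=\sum_{u\neq x}\oomega_{u\to x}-\sum_{u\neq x}\oomega_{x\to u}.
$$
I expand $\extd_V^0\de_x\wedge\extd_V^0\de_y$ bilinearly, using $\oomega_{a\to b}\wedge\oomega_{c\to d}=\de_{b,c}\oomega_{a\to b\to d}$. This produces four families of terms.
\begin{itemize}
\item $\oomega_{u\to x}\wedge\oomega_{w\to y}$ survives only for $w=x$, which gives $\sum_u\oomega_{u\to x\to y}$.
\item $-\oomega_{u\to x}\wedge\oomega_{y\to w}$ would force $x=y$, which is excluded for an edge, so it vanishes.
\item $-\oomega_{x\to u}\wedge\oomega_{w\to y}$ survives for $w=u$, which gives $-\sum_u\oomega_{x\to u\to y}$.
\item $\oomega_{x\to u}\wedge\oomega_{y\to w}$ survives for $u=y$, which gives $\sum_w\oomega_{x\to y\to w}$.
\end{itemize}
This is exactly (\ref{expr-d}). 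The summation over $u\in V$ there is to be read with the convention that symbols $\oomega_{a\to b\to c}$ with $a=b$ or $b=c$ are zero, since they are not generators of $\Omega^2_V$. I will state this convention explicitly. A useful reformulation is that, with $\theta=\sum_{a\neq b}\oomega_{a\to b}$, the first and last groups of terms are $\theta\wedge\oomega_{x\to y}$ and $\oomega_{x\to y}\wedge\theta$ respectively.

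\textbf{Step 2: the Leibniz rules.} Both sides of each rule are $\bk$-linear in $f$, so it suffices to take $f=\de_z$. For the left rule, $\de_z\oomega_{x\to y}=\de_{z,x}\oomega_{x\to y}$, and I compute both sides.
\begin{itemize}
\item The term $\extd_V^0\de_z\wedge\oomega_{x\to y}$ equals $\de_{z,x}\sum_u\oomega_{u\to x\to y}-\oomega_{z\to x\to y}$, where the last term is present only for $z\neq x$.
\item The term $\de_z\,\extd_V^1\oomega_{x\to y}$ picks out those generators in (\ref{expr-d}) whose first index is $z$. These are the whole of $-\sum\oomega_{x\to u\to y}+\sum\oomega_{x\to y\to u}$ when $z=x$, plus the single term $\oomega_{z\to x\to y}$ coming from the first sum.
\end{itemize}
The two occurrences of $\oomega_{z\to x\to y}$ cancel, and what remains is $\de_{z,x}\extd_V^1\oomega_{x\to y}$, as required. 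The right Leibniz rule is the mirror computation, now selecting generators by their last index; it produces the sign $-\oomega_{x\to y}\wedge\extd_V^0 f$.

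\textbf{Step 3: $\extd_V^1\circ\extd_V^0=0$.} It suffices to check this on the $\de_x$. I substitute (\ref{expr-d}) into $\sum_{y\neq x}\extd_V^1\oomega_{y\to x}-\sum_{y\neq x}\extd_V^1\oomega_{x\to y}$ and collect the coefficient of each generator $\oomega_{a\to b\to c}$.
\begin{itemize}
\item The case $x\notin\{a,b,c\}$ cannot occur.
\item If $x$ occurs in exactly one position, then two of the three sums in (\ref{expr-d}) contribute that generator with opposite signs. For instance, for $\oomega_{a\to b\to x}$ the term $\oomega_{u\to y\to x}$ with $u=a,y=b$ cancels against $-\oomega_{y\to u\to x}$ with $y=a,u=b$.
\item The case $a=c=x$ is handled separately in the same way.
\end{itemize}
Alternatively, Step 3 follows from Step 2: using $\extd_V^0\de_x=\sum_y\de_y\extd_V^0\de_x$ and applying the Leibniz rule, the sum reduces to $\extd_V^0(\sum_y\de_y)\wedge\extd_V^0\de_x=\extd_V^0(1)\wedge\extd_V^0\de_x=0$.

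The main obstacle is purely bookkeeping. One must keep track of the degenerate index coincidences: $x=z$ is allowed in a triangular clique, but the middle index must differ from both neighbours. Each cancellation must be checked separately in these boundary cases, and I would organize the computation by the position in which $x$ (respectively $z$) appears, so that these cases are treated systematically.
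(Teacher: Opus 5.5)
Your main argument is correct and matches what the paper intends. The paper gives no proof of its own; it leaves the check to the reader with a reference to Dimakis--M\"uller-Hoissen. Your Step 1 expansion, your $\de_z$-checks of both Leibniz rules, and your coefficient count in Step 3 (including the $\oomega_{x\to b\to x}$ case) all go through.

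The one thing to drop or repair is your ``alternative'' proof of $\extd_V^1\circ\extd_V^0=0$, which is circular as written. Applying the left Leibniz rule to $\de_y\,\extd_V^0\de_x$ gives $\extd_V^0\de_y\wedge\extd_V^0\de_x+\de_y\,\extd_V^1\extd_V^0\de_x$. Summing over $y$ then yields only the tautology $\extd_V^1\extd_V^0\de_x=\extd_V^0(1)\wedge\extd_V^0\de_x+\extd_V^1\extd_V^0\de_x$, and discarding the second term already assumes the claim.

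The idea works if you use the definition of $\extd_V^1$ on generators instead of Leibniz. Since $\extd_V^0\de_x=\sum_{y\neq x}\oomega_{y\to x}-\sum_{z\neq x}\oomega_{x\to z}$, you get
\[
\extd_V^1\extd_V^0\de_x=\Bigl(\sum_{y\neq x}\extd_V^0\de_y\Bigr)\wedge\extd_V^0\de_x-\extd_V^0\de_x\wedge\Bigl(\sum_{z\neq x}\extd_V^0\de_z\Bigr).
\]
Because $\sum_{y\neq x}\extd_V^0\de_y=\extd_V^0(1)-\extd_V^0\de_x=-\extd_V^0\de_x$, the two terms cancel. This is a one-line replacement for the case-by-case bookkeeping.
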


We now show how the notion of differential calculus
can be extended beyond the first order.
Let $G=(V,E) \in \diGraphs_{\leq 1}$, 
we can write the FODC $\Gamma^1_G$ as a quotient:
$$
\Gamma^1_G=\Omega^1_V/I, \qquad I=\mathrm{span}\{\omega_{x \to y} \,|\, x \to y \not\in E\}
$$
(see \cite{majidpaper, bm}). 
The previous proposition, along with the definition of the wedge product,
allows us to see $\Omega^\bullet_V:=\oplus_{i=0}^2\Omega^i_V$ (where $\Omega^0_V:=\bk[V]$) as a differential graded algebra $(\Omega^\bullet_V,d_V^\bullet)$
(DGA, see \cite{dimakis} for a definition in this context).

\begin{proposition}
Let $G\in \diGraphs_{\leq1}$. The graded $A_G$-bimodule $\Gamma^\bullet_G:=\oplus_{i=0}^2\Gamma^i_G$, where
$$
\Gamma^0_G:=A_G, \qquad
\Gamma^1_G=\Omega^1_V/I\qquad \Gamma^2_G\cong\Omega^2_V/\extd_V^1(I)
$$
has a well defined
DGA structure induced by the one of $(\Omega^\bullet_V,\extd_V^\bullet)$, the bimodule structure being the same.
\end{proposition}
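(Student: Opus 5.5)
The plan is to realise $\Gamma^\bullet_G$ as the quotient of the DGA $(\Omega^\bullet_V,\extd_V^\bullet)$ by a \emph{differential graded ideal} $\mathcal{J}=\mathcal{J}^0\oplus\mathcal{J}^1\oplus\mathcal{J}^2$. Once $\mathcal{J}$ is known to be a two-sided ideal for $\wedge$ that is stable under $\extd_V$, the product, the differential and the bimodule structure all descend, and the DGA axioms (Leibniz rule, $\extd^2=0$) are inherited from the Leibniz rule and $\extd_V^1\circ\extd_V^0=0$ proved above. The structure is truncated at degree $2$, so products landing in degree $\ge 3$ are zero and need no check. I would take
\[
\mathcal{J}^0=0,\qquad \mathcal{J}^1=I,\qquad \mathcal{J}^2=\Omega^1_V\wedge I+I\wedge\Omega^1_V+A\,\extd_V^1(I)\,A .
\]
This is the smallest differential ideal containing $I$, and it is the object I would read as "$\extd_V^1(I)$" in the statement, namely the sub-bimodule it generates modulo the wedge-ideal.

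\textbf{Step 1: ideal property.} First I check that $I$ is an $A$-sub-bimodule of $\Omega^1_V$. This holds because $\delta_a\oomega_{x\to y}\delta_b=\delta_{a,x}\delta_{b,y}\oomega_{x\to y}$, so multiplication preserves the condition $x\to y\notin E$. Next I verify the products of each kind land in $\mathcal{J}$:
\begin{itemize}
\item $A\cdot\mathcal{J}^2\cdot A\subseteq\mathcal{J}^2$, using that $\Omega^2_V$ is a bimodule by endpoints;
\item $\Omega^1_V\wedge\mathcal{J}^1\subseteq\mathcal{J}^2$ and $\mathcal{J}^1\wedge\Omega^1_V\subseteq\mathcal{J}^2$, which hold by construction.
\end{itemize}

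\textbf{Step 2: stability under the differential.} We need $\extd_V(\mathcal{J}^1)\subseteq\mathcal{J}^2$, which is immediate. For $\mathcal{J}^0=0$ there is nothing to check. The other requirement is compatibility of the Leibniz rule: since $\extd_V^1(f\omega)=\extd_V^0 f\wedge\omega+f\,\extd_V^1\omega$ with $\omega\in I$, the difference between $\extd_V^1(f\omega)$ and $f\,\extd_V^1\omega$ lies in $\Omega^1_V\wedge I$. Hence the bimodule generated by $\extd_V^1(I)$ is contained in $\mathcal{J}^2$, and the quotient differential $\Gamma^1_G\to\Gamma^2_G$ is well defined and still satisfies the Leibniz rule.

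\textbf{Step 3: identification of the degree-two quotient.} This is the step I expect to be the main obstacle. First, $\Omega^1_V\wedge I+I\wedge\Omega^1_V$ is spanned by the $\oomega_{x\to y\to z}$ in which $x\to y$ or $y\to z$ is not an edge. Next, using (\ref{expr-d}) together with the projections $\delta_a(\cdot)\delta_b$, I compute that $A\,\extd_V^1(\oomega_{x\to z})A$ contributes, modulo the previous span, the elements $\sum_{u}\oomega_{x\to u\to z}$ for $x\to z\notin E$, where the sum runs over $u$ such that $x\to u$ and $u\to z$ are edges. Care is needed in the case $x=z$, where the length-two loops $\oomega_{x\to y\to x}$ are allowed.

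Finally, I compare the resulting quotient with the span (\ref{2forms}) of triangular cliques, via the map sending a clique to its class. Cliques with $x\to z\in E$ survive freely. For $x\to z\notin E$ one must check that the extra relations exactly kill the remaining non-clique paths, or else adopt Majid's maximal-prolongation convention in \cite{bm}, so that the isomorphism $\Gamma^2_G\cong\Omega^2_V/\extd_V^1(I)$ is the one intended. I would handle this bookkeeping of paths $x\to u\to z$ by sorting the basis elements $\oomega_{x\to u\to z}$ according to whether $(x,u)$, $(u,z)$ and $(x,z)$ are edges.
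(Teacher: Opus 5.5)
\textbf{Gap in Step 3.} Your Steps 1--2 are fine: $\mathcal{J}=0\oplus I\oplus(\Omega^1_V\wedge I+I\wedge\Omega^1_V+A\,\extd_V^1(I)\,A)$ is a differential graded ideal, so the quotient is a DGA. But that quotient is the maximal prolongation of $\Gamma^1_G$. It is not the calculus $\Gamma^\bullet_G$ of the statement, whose degree-two part is the span (\ref{2forms}) of triangular cliques. Step 3 therefore cannot be completed: the check that ``the extra relations exactly kill the remaining non-clique paths'' is false. For each ordered pair $x\neq z$ with $x\to z\notin E$ you get exactly one relation $\sum_u\oomega_{x\to u\to z}=0$. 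So if there are $k\geq 2$ two-step paths from $x$ to $z$, then $k-1$ independent non-clique classes survive.

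Concretely, take the bidirected square on vertices $1,2,3,4$ (edges $1\leftrightarrow 2\leftrightarrow 3\leftrightarrow 4\leftrightarrow 1$). Then $\Omega^2_V/\mathcal{J}^2$ has dimension $16-4=12$, with for example $\oomega_{1\to 2\to 3}=-\oomega_{1\to 4\to 3}\neq 0$. The triangular cliques, by contrast, are only the eight loops $\oomega_{x\to y\to x}$. Falling back on the maximal-prolongation convention does not rescue the argument, because it changes $\Gamma^2_G$ rather than identifying it.

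\textbf{The missing idea.} $\Gamma^2_G$ is the quotient by a strictly larger sub-bimodule. Let $\mathcal{K}^2\subset\Omega^2_V$ be spanned by all non-clique $\oomega_{x\to y\to z}$: those where one of $x\to y$, $y\to z$ is not in $E$, or where $x\neq z$ and $x\to z\notin E$.
\begin{itemize}
\item $\mathcal{K}^2$ is a sub-bimodule, because $A$ acts on the path basis only through the endpoints.
\item It contains $\Omega^1_V\wedge I+I\wedge\Omega^1_V$ trivially.
\item It contains $\extd_V^1(I)$: by \eqref{expr-d}, every term of $\extd_V^1\oomega_{x\to y}$ with $x\to y\notin E$ has $x\to y$ either as one of its two steps or as its chord.
\end{itemize}
Since the calculus stops in degree $2$, no further condition is needed, so $0\oplus I\oplus\mathcal{K}^2$ is a differential ideal. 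The Leibniz rule and $\extd^2=0$ descend. The quotient is exactly the clique span, with $\oomega_{x\to y}\wedge\oomega_{y\to z}=\oomega_{x\to y\to z}$ on cliques and $0$ otherwise.

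This is the ``direct check'' the paper has in mind; compare the remark right after the proposition that further quotients by cliques still give a well-defined differential. The ``$\extd_V^1(I)$'' in the statement must be read as this $\mathcal{K}^2$, not as the ideal generated by $I$. (Literally, the linear image $\extd_V^1(I)$ is not even a sub-bimodule.) With $\mathcal{J}^2$ replaced by $\mathcal{K}^2$, your framework goes through and Step 3 becomes a tautology.
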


\begin{proof} Direct check.\end{proof}
We notice that any quotient of $\Gamma^2_G$ by the span of a subset of the triangular cliques will
give a well defined differential according to the formula (\ref{diff-def2}).
This prompts the following definition.

\begin{definition}\label{def:2ordcalc}
Let $G=(V,E) \in \diGraphs_{\leq1}$ and $S$ a subset of its triangular cliques.
We define the pair  $(\Gamma^\bullet_S,d_S^\bullet)$ with:
\begin{equation}
\Gamma^\bullet_S:=\Gamma^\bullet_G/\langle S \rangle, \qquad \extd_S^\bullet:\Gamma^\bullet_S \lra \Gamma_S^\bullet
\end{equation}
a \textit{second order differential calculus} on $A=\bk[V]$, where
$\langle S \rangle$ is the $A_G$-bimodule generated by $S$ and 
$\extd_S^1$
is obtained from $\extd_G^1$, by taking
the quotient of $\Gamma^2_G$ by $\langle S \rangle$.
\end{definition}

Note that if $S=\emptyset$ we get that $(\Gamma^\bullet_S,d_S^\bullet)=(\Gamma^\bullet_G,d_G^\bullet)$. In addition, notice that $\extd_S^1$
satisfies the Leibnitz rule and $\extd_S^1 \circ \extd_S^0=0$, where $\extd_S^0=\extd_G^0$ and $\Gamma^i_S:=\Gamma^i_G$
for $i=0,1$.

\begin{remark}
Let $V$ be a finite set. Our approach could
be extended to obtain all differential graded algebras on $A$ as quotients
of the universal one $\Omega_V:=\oplus_n \Omega_V^n$,
where $\Omega_V^n$ is constructed similarly to (\ref{2forms}), see \cite{dimakis}.
Moreover one could also extend our results to comprehend the case of \'etale directed covers, however this is not straightforward and we shall not pursue this point of a view furtherly since it would lead us outside the scope of the present paper.
\end{remark}

\subsection{Connections on Graphs} \label{conn-vb-sec}
We start by defining vector bundles on graphs
and connections on graphs generalizing \cite{gaybalmaz, bm, dimakis}.
We see objects in $\Graphs$ as objects in $\diGraphs$ 
via the functor described in Proposition \ref{prop-graphffemb},
where an unoriented edge is replaced by two oriented
edges, one for each direction.
Notice that such functor is fully faithful when restricted to $\Graphs_{\leq 1}$.
However in general for $\Graphs$ is still faithful, but it may not be full. Recall that in this section we are considering graphs having exactly one self loop per vertex, and that the functor $\Theta$ defined in Section \ref{sec:undgr} preserves this property.

\begin{definition}
    We say that a digraph $G\in\diGraphs$ is \textit{bidirected} if it is of the form $\Theta(H)$ for some $H\in\Graphs$.
\end{definition}

\begin{definition}
A \textit{vector bundle} $\cF$ of rank $n$ on a {set $V$}
is an assignment:
$$
v \lra \cF_u, \qquad v \in V
$$
where $\cF_v$ is a vector space of dimension $n$.
We define the \textit{frame bundle} $\mathrm{Fr}$, an assignment:
$$
V \ni v \mapsto \{e^v_i\} \subset \cF_v
$$
where $\{e^v_i\}$ is a basis for $\cF_v$. Moreover we denote with
$\id_{u,v}: \cF_u \lra \cF_v$ the linear map $\id_{u,v}(e_i^u)=e_i^v$.
\end{definition}

\begin{definition}\label{GB-parallel}
Let $\cF$ be a vector bundle on $V$ and let $G=(V,E) \in \diGraphs$.
We define a \textit{weak parallel transport} 
a collection of linear maps $\mathcal{R}_{e,u \to v}:\cF_v \lra \cF_u$,
where $e$ is an edge between $u$ and $v$.
For each (unique) self loop $u\rightarrow u$, we impose that $\cR_{u\to u}=\id_{\cF_u}$.
If $G \in \diGraphs_{\leq 1}$ is bidirected, we say that a weak parallel transport is a
\textit{parallel transport} if each $\mathcal{R}_{e,u \to v}$
is invertible and $\mathcal{R}_{e,u \to v}=\mathcal{R}_{e',v \to u}^{-1}$, where
$e'$ is the unique edge from $v$ to $u$ via the functor in Prop. \ref{prop-graphffemb}.
To ease the notation, when $G \in \Graphs_{\leq 1}$, we omit $e$ and write  $\mathcal{R}_{u \to v}$.
We define a \textit{connection} on a digraph $G$
as a collection of linear maps
$\Theta_{e, u \to v}:= \mathcal{R}_{e, u \to v} - {\id_{v,u}}$,
on all edges $e \in E$, with $\{\mathcal{R}_{e,u \to v}\}$ a weak parallel transport.

Once a frame bundle is given, we can write (implicit summation on repeated indices):
$$
\mathcal{R}_{e,u \to v}:\cF_v \lra \cF_u, \qquad e_i^v \mapsto \mathcal{R}_{e, u\to v, i}^j e_j^v
$$
So we can view a connection as
an element of $\Gamma^1_G\otimes_{A_G}\mathrm{M}_n(A_G)\cong\mathrm{M}_n(\Gamma^1_G)$
in line with \cite{dimakis}.
\end{definition}

\begin{observation}\label{obs-eur}
In the differentiable setting the parallel transport for a vector bundle $E \lra M$ on
a differentiable manifold $M$ is a collection of maps:
$$
{\displaystyle \Gamma (\gamma )_{s}^{t}:E_{\gamma (s)}\rightarrow E_{\gamma (t)}}
$$
It allows us to take the derivative of a section
$V$ along a curve $\gamma$: 
\begin{equation}\label{ord-par-tr}
\nabla _{\dot \gamma} V =\lim _{h\to 0} \frac {\Gamma (\gamma )_{h}^{0}V_{\gamma (h)}-V_{\gamma (0)}}{h}
=\left.{\frac {d}{dt}}\Gamma (\gamma )_{t}^{0}V_{\gamma (t)}\right|_{t=0}.
\end{equation}
We now euristically rewrite the above formula, replacing the curve $\gamma$ with
an edge $e$ between vertices $u$ and $v$ (taking the places of $\gamma(0)$ and $\gamma(h)$) of
the graph $G \in \Graphs$. The vector bundle $E$
is replaced by $\cF$, so that $V_u \in \cF_u$ and we set $h=1$ in the limit. We obtain 
\begin{equation}\label{eq-mgb}
V_u \mapsto \mathcal{R}_{e,u\to v} V_v - V_u 
\end{equation}
In this euristic identification, $\nabla$ is replaced with a family of 
$\mathcal{R}_{e, u\to v}-\mathrm{id}$ for each edge $e$ between $u$ and $v$ and for all pair
of vertices $(u,v)$ linked by an edge,
in agreement with Def. \ref{GB-parallel} of connection for vector bundles on graphs.
\end{observation}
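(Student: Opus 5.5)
The observation is heuristic, so the plan is to make precise the sense in which (\ref{eq-mgb}) is the discrete analogue of (\ref{ord-par-tr}), and then check that it agrees with Definition \ref{GB-parallel}. First I would fix a frame bundle $\mathrm{Fr}$, so that every $\cF_u$ is identified with $\bk^n$ through the maps $\id_{u,v}$. I would then read (\ref{ord-par-tr}) as a forward difference quotient along the curve. Replacing $\gamma(0),\gamma(h)$ by the endpoints $u,v$ of an edge $e$, and the transport $\Gamma(\gamma)^0_h$ by $\mathcal{R}_{e,u\to v}:\cF_v\to\cF_u$, the difference quotient with step $h=1$ is exactly $\mathcal{R}_{e,u\to v}V_v - V_u$. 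This is (\ref{eq-mgb}) taken literally, for a section $V=(V_w)_{w\in V}$ of $\cF$.

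Next I would rewrite this expression so that a connection in the sense of Definition \ref{GB-parallel} appears explicitly. The key step is the identity
$$
\mathcal{R}_{e,u\to v}V_v - V_u=(\mathcal{R}_{e,u\to v}-\id_{v,u})V_v+(\id_{v,u}V_v-V_u).
$$
In the chosen frame, the second summand is the componentwise difference $V^i(v)-V^i(u)$, i.e. the coefficient of $\oomega_{u\to v}$ in $\extd V^i$ computed from the FODC formula $\extd f=\sum_{x\to y}(f(y)-f(x))\oomega_{x\to y}$. The first summand is $\Theta_{e,u\to v}V_v$.

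Assembling these over all edges gives the formula $\nabla V=\extd V+\Theta V$, with $\Theta\in \mathrm{M}_n(\Gamma^1_G)$. This is the discrete analogue of the local expression $\nabla=\extd+A$ of a covariant derivative, and it matches the description of connections in \cite{dimakis}. As consistency checks, I would verify two points:
\begin{itemize}
\item On a self loop the condition $\mathcal{R}_{u\to u}=\id$ forces $\Theta_{u\to u}=0$. The derivative along a trivial path then vanishes, just as $\Gamma(\gamma)^0_0=\id$ in the smooth case.
\item Using the bimodule rules $f\oomega_{x\to y}=f(x)\oomega_{x\to y}$ and $\oomega_{x\to y}f=\oomega_{x\to y}f(y)$, the operator $\nabla$ satisfies a Leibniz rule $\nabla(fV)=\extd f\otimes V+f\nabla V$, up to the expected right-module twist.
\end{itemize}

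The main obstacle is not computational but conceptual: the limit $h\to0$ has no discrete counterpart, and the choice $h=1$ together with the forward (rather than symmetric) difference is a convention. I would address this by stating explicitly that the content of the observation is the algebraic identity displayed above, together with the Leibniz property, rather than a genuine convergence statement.
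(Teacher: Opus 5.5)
Your proposal is correct and follows the paper's route. The substitution $\gamma(0),\gamma(h)\mapsto u,v$ with $h=1$ gives (\ref{eq-mgb}), and your splitting $\mathcal{R}_{e,u\to v}V_v-V_u=\Theta_{e,u\to v}V_v+(\id_{v,u}V_v-V_u)$ into a connection term plus the FODC differential is exactly what the paper later makes precise in Observation \ref{GB-Majid-equiv} through (\ref{eq:omega}) and (\ref{conn-f}). One small correction to your side check: $\Theta_{e,u\to v}$ acts on $V_v$ at the head of the edge, so the Leibniz rule that holds exactly is the right-module one of Definition \ref{maj-conn}, $\nabla(Vf)=V\otimes\extd f+(\nabla V)f$ in $M\otimes_{A_G}\Gamma^1_G$; the left form $\nabla(fV)=\extd f\otimes V+f\nabla V$ that you wrote fails by $(f(v)-f(u))\Theta_{e,u\to v}V_v$ on the edge $u\to v$.
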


\subsection{Noncommutative Connections and vector bundles}
We now want to take a noncommutative, though equivalent, point of view on connections,
generalizing \cite[Ch. 3, 3.2]{bm}, beyond $\Graphs_{\leq 1}$,
that will allow us to write a connection $\Theta=(\Theta_{e, u \to v})$
in a more effective way.

\begin{definition}\label{maj-conn}
Let $G=(V,E) \in \diGraphs$, $A=\bk[V]$ and $(\Gamma^1, d)$ the FODC on $A$ as in Def.
\ref{fodc-etale}. Let $M$
be a free rank $n$ left $A$-module. We define a \textit{left noncommutative connection} $\nabla$ on $M$ as a map $\nabla:M \lra \Gamma^1 \otimes M$
satisfying the Leibniz identity, i.e:
$$
\nabla(fm)=\extd f \otimes m + f \nabla m,\qquad f \in A, \quad m \in M
$$
Analogously, given a free rank $n$ right $A$-module $M$, one can define a \textit{right noncommutative connection} $\nabla$ on $M$ as a map 
$\nabla:M \lra  M \otimes \Gamma^1$
satisfying the Leibniz identity:
$$
\nabla(mf)=m \otimes \extd f + (\nabla m)f, \qquad f \in A, \quad m \in M
$$

\end{definition}
Once a basis $\{ e_i\}_{i=1}^n$ for the free $A$-module $M$ is chosen, a non commutative right
connection amounts to give a map:
$$
e_if^i\mapsto e_i \otimes df^i + 
e_j\otimes\oomega^j_if^i
$$
where $\oomega^j_i$ is a matrix of 1 forms, i.e. elements of $\Gamma^1$.

\begin{observation}\label{GB-Majid-equiv}
Let the notation be as above.
Let $G=(V,E)  \in \diGraphs$ and $A= \bk[V]$. There is a bijective correspondence between the
two notions:
\begin{enumerate}
\item A noncommutative right connection on $M$,
a right $A$-module of rank $n$, with respect to the FODC given via $G$ on $A$.
\item A connection on a digraph (or equivalently a weak parallel transport see Def. \ref{GB-parallel}).
\end{enumerate}
(2) $\rightarrow$ (1). In fact, consider a vector bundle $\cF$ of rank $n$ on $V$,
a frame bundle
$$V \ni v \mapsto \{e^v_i\}$$
and a free rank $n$ right $A$-module $n$ with the choice of a basis $\{ e_i\}_{i=1}^n$ (note that $\{ \delta_ve_i\}_{i,v}$ will then be a basis for $M$ as a $\bk$-vector space). Then given a connection $\Theta_{e, u \to v}:= \mathcal{R}_{e, u \to v} - {\id_{v,u}}$ by setting  
\begin{equation}\label{eq:omega}
\oomega^j_i = \sum_{e, x \to y}[\mathcal{R}_{{e, x \to y, i}}^j -\de_{i,j}]\oomega_{e, x\to y}
\end{equation}
we get a noncommutative right connection on $M$. Explicitly note that the connection given by the assignment (\ref{eq:omega}) is determined by the following formula
\begin{equation}\label{conn-f}
M \ni e^if_i \mapsto 
\sum_{e, x \to y}
e_j\otimes\left[ f^i(y) \mathcal{R}_{{e, x \to y, i}}^j  -
f^i(x)\delta_{ij}\right]\oomega_{e, x \to y} \in M \otimes \Gamma^1_G
\end{equation}this is reminiscent of the formula (\ref{ord-par-tr}) found in the classical theory of parallel transport for a vector bundle on a manifold.

(1) $\rightarrow$ (2). Conversely given a right connection
$$
e_if^i\mapsto e_i \otimes df^i + e_j\otimes\oomega^j_if^i
$$ where $\omega^j_i=\sum_{e, x \to y}a^j_{e, x \to y,i}\omega_{e, x \to y}$ (using the basis $\{\omega_{e, x \to y}\}$ of $\Gamma^1_G$ as a $\bk$-vector space), by setting $R_{e, x \to y,i}^j:=a^j_{e, x \to y,i}+\delta_{ij}$ we get a connection.
\end{observation}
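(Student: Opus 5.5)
The plan is to reduce both sides of the correspondence to the same linear-algebraic datum: a family of scalars $a^j_{e,x\to y,i}$ indexed by pairs $i,j\in\{1,\dots,n\}$ and by edges $e$ of $G$. Throughout I fix the frame bundle $\{e^v_i\}$ on $\cF$ and the basis $\{e_i\}$ of the free right module $M$, since the bijection depends on these choices. On the geometric side, a weak parallel transport is exactly an arbitrary family of linear maps $\mathcal{R}_{e,u\to v}:\cF_v\to\cF_u$. In the frame, each map is a matrix $\mathcal{R}^j_{e,u\to v,i}$, subject only to $\mathcal{R}_{u\to u}=\id$ on self loops. On the algebraic side, I will show that right noncommutative connections on $M$ are in bijection with matrices $(\oomega^j_i)\in \mathrm{M}_n(\Gamma^1_G)$. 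Since $\Gamma^1_G$ has the $\bk$-basis $\{\oomega_{e,x\to y}\}$ by Def. \ref{fodc-etale} and Prop. \ref{fodc-gen}, such matrices are the same as families of scalars $a^j_{e,x\to y,i}$.

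\textbf{Step 1 (connections form an affine space).} I first check that $\nabla_0(e_if^i):=e_i\otimes \extd f^i$ is a right connection; this follows from the Leibniz rule for $\extd$. Next, if $\nabla,\nabla'$ are two right connections, then $\nabla-\nabla'$ is right $A$-linear, because the $m\otimes\extd f$ terms cancel. Since $M$ is free on $\{e_i\}$, a right $A$-linear map $M\to M\otimes_A\Gamma^1$ is determined freely by the images of the $e_i$. Using $M\otimes_A\Gamma^1\cong\oplus_j e_j\otimes\Gamma^1$, these images are $e_j\otimes\oomega^j_i$ with arbitrary $\oomega^j_i\in\Gamma^1$. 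Hence
$$\nabla(e_if^i)=e_i\otimes\extd f^i+e_j\otimes\oomega^j_if^i,$$
and $\nabla\mapsto(\oomega^j_i)$ is a bijection onto $\mathrm{M}_n(\Gamma^1_G)$.

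\textbf{Step 2 (matching the two sides).} Writing $\oomega^j_i=\sum a^j_{e,x\to y,i}\,\oomega_{e,x\to y}$, I define $\mathcal{R}^j_{e,x\to y,i}:=a^j_{e,x\to y,i}+\delta_{ij}$. The inverse assignment is (\ref{eq:omega}), and the two are visibly mutually inverse on scalars. To confirm that this is the intended correspondence, I then verify formula (\ref{conn-f}). I expand $e_i\otimes\extd f^i=\sum e_i\otimes (f^i(y)-f^i(x))\oomega_{e,x\to y}$, and use the bimodule rule $\oomega_{e,x\to y}f=f(y)\oomega_{e,x\to y}$ from Prop. \ref{fodc-gen} to get $e_j\otimes\oomega^j_if^i=\sum e_j\otimes a^j_{e,x\to y,i}f^i(y)\,\oomega_{e,x\to y}$. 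Adding the two expressions, the $f^i(y)\delta_{ij}$ term combines with $a^j_i f^i(y)$ to give $\mathcal{R}^j_i f^i(y)$, while $-f^i(x)\delta_{ij}$ remains. This is exactly (\ref{conn-f}), the discrete analogue of (\ref{eq-mgb}).

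\textbf{Main obstacle: self loops.} A weak parallel transport must satisfy $\mathcal{R}_{u\to u}=\id$, which forces $a^j_{u\to u,i}=0$. So the correspondence is a bijection only if self loops contribute no free coefficient to $\Gamma^1$. I would first settle this by checking whether, in the \'etale-cover construction, the generators $\oomega_{u\to u}$ are genuinely free in $\Gamma^1$, or whether they are absent or absorbed. The formula (\ref{d-def}) for $\extd\delta_x$ suggests that only edges $x\neq y$ contribute. If self-loop generators do survive, the statement has to be read as a bijection between weak parallel transports and those right connections whose matrix $\oomega^j_i$ has vanishing self-loop components. Finally, for multigraphs I need the edge-indexed basis $\oomega_{e,x\to y}$ to be well defined independently of the chosen \'etale directed cover. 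This is guaranteed by the isomorphism $\psi$ in the proof of Prop. \ref{fodc-gen}, which matches edges bijectively and intertwines the differentials.
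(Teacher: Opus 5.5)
Your proposal is correct and follows the paper's route. Both identify a right connection with its matrix of one-forms $\oomega^j_i$ and pass to weak parallel transports via $\mathcal{R}^j_{e,x\to y,i}=a^j_{e,x\to y,i}+\delta_{ij}$; you additionally justify the parametrization of connections by $\mathrm{M}_n(\Gamma^1_G)$, which the paper takes for granted, and you check (\ref{conn-f}). Your self-loop worry is settled by the paper's standing convention $x\neq y$ for the generators $\oomega_{x\to y}$ of $\Gamma^1$, consistent with (\ref{d-def}) and with the \'etale directed cover being bijective only on non-self-loops, so there is no free self-loop coefficient and the condition $\mathcal{R}_{u\to u}=\id$ matches exactly.
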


\subsection{Curvature}
We now define the {\sl curvature} of a connection.
Recall that in Sec. \ref{2-forms}, given $G \in \diGraphs$
and a subset $S$ of its triangular cliques, we have given the notion of 2-forms on $G$,
as elements of a second order differential calculus (see Def. \ref{def:2ordcalc}).

\begin{definition}\label{def:curvature}
Let $G \in \diGraphs_{\leq 1}$, $S$ a subset of its triangular cliques and $M$ a free $A$
bimodule of rank $n$ having as a basis $\{e_i\}_{i=1}^n$.
Given a connection $\nabla:M\rightarrow \Gamma^1_G\otimes M$ as above 
we define:
\begin{itemize}
\item its \emph{curvature} $R_\nabla:M\rightarrow M\otimes\Gamma^2_G$
as the right $A$ module map defined on the basis $\{e_i\}_{i=1}^n$ as follows
$$
R_\nabla(e_i)=
e_j\otimes d\oomega^j_i+
e_j\otimes 
\oomega^j_k\wedge\oomega^k_i
$$ 
\item its \emph{curvature outside of} $S$ as:
$$R_\nabla^S:=(\id\otimes\pi_S)\circ R_\nabla:M\rightarrow M\otimes \Gamma^2_S$$
where $\pi_S:\Gamma^2_G\rightarrow\Gamma^2_S=\Gamma^2_G/\langle S \rangle$ is the projection morphism.
\end{itemize}
We say that $\nabla$ is \emph{flat outside of $S$} if $R_\nabla^S=0$.
We say that $\nabla$ is \emph{flat} if $R_\nabla=0$.
\end{definition}
Notice that when $G$ is the fully connected digraph in $\diGraphs_{\leq 1}$ and $S=\emptyset$ we recover
the definition of curvature as in \cite{dimakis} and Definition 3.18 of \cite{bm}.
Using equation (\ref{eq:omega}) we can rewrite $R_\nabla$ in terms the weak parallel transport associated with $\nabla$ as

\begin{equation}\label{eq:curv}
  R_\nabla(e_i)=\sum_{x\to y\to z\in \mathrm{tri}(G)}
  (\mathcal{R}_{x\rightarrow y,k}^j\mathcal{R}_{y\rightarrow z,i}^k-\mathcal{R}_{x\rightarrow z,i}^j)e_j\otimes\omega_{x\to y\to z}
\end{equation}
where we have omitted the edge $e$ in $\mathcal{R}_{e, x \to y} $ because of our assumptions on $G$ and we have denoted as $\mathrm{tri}(G)$ the set of all triangular cliques of $G$.
 Notice that in (\ref{eq:curv}) we are not summing on the index $i$.
Then, by equation (\ref{eq:curv}) we get the following result.

\begin{proposition}
Let be $G$, $M$ and $\nabla$ as above. Then:

1. If $\nabla$ is flat then $\mathcal{R}_{x\rightarrow z}=\cR_{x\rightarrow y}\mathcal{R}_{y\rightarrow z}$ for each triangular clique. In particular, we have that $\cR_{x\to y}=\cR_{y\to x}^{-1}$ for all edges $x\to y\in E_G$ that are part of a triangular clique of the form $x\to y\to x$.  

2. Assume $G$ to be bidirected.
Consider the set of triangular cliques $S$ consisting of all triangular cliques of the form $x\to y\to z$ having $x,y,z\in V_G$ all distinct.
Then $\nabla$ is flat outside of $S$ if and only if $\cR_{x\to y}=\cR_{y\to x}^{-1}$ for all edges $x\to y\in E_G$, that is the weak parallel transport associated to $\nabla$ is a parallel transport. 

\end{proposition}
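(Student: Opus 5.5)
Both parts follow from the explicit formula (\ref{eq:curv}) for $R_\nabla$ in terms of the weak parallel transport. The strategy is to read off the coefficients of $R_\nabla$ in the basis $\{e_j\otimes\omega_{x\to y\to z}\}$ of $M\otimes\Gamma^2_G$, or of $M\otimes\Gamma^2_S$ for part 2.

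First I would note that the elements $e_j\otimes\omega_{x\to y\to z}$, with $x\to y\to z\in\mathrm{tri}(G)$, are linearly independent over $\bk$ in $M\otimes\Gamma^2_G$. This holds because $\Gamma^2_G$ is freely spanned by the triangular cliques. Hence $R_\nabla(e_i)=0$ for all $i$ holds exactly when, for every clique and every pair $i,j$,
\[
\mathcal{R}^j_{x\to y,k}\mathcal{R}^k_{y\to z,i}=\mathcal{R}^j_{x\to z,i}.
\]
In matrix form this reads $\mathcal{R}_{x\to y}\mathcal{R}_{y\to z}=\mathcal{R}_{x\to z}$. Since $R_\nabla$ is a right $A$-module map, vanishing on the basis $\{e_i\}$ is the same as flatness. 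This gives the first claim of part 1.

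For the "in particular" statement, I would specialize to a clique $x\to y\to x$, so $z=x$. Then $x\to z$ is the self loop at $x$, and by Def. \ref{GB-parallel} we have $\mathcal{R}_{x\to x}=\id_{\cF_x}$. The identity becomes $\mathcal{R}_{x\to y}\mathcal{R}_{y\to x}=\id$. Because these are square matrices of size $n$, a one-sided inverse is a two-sided inverse, so $\mathcal{R}_{x\to y}=\mathcal{R}_{y\to x}^{-1}$.

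For part 2, I would first check that $\langle S\rangle$ is spanned over $\bk$ by the cliques in $S$ themselves. The left and right $A$-actions on $\omega_{x\to y\to z}$ only multiply it by scalars $f(x)$ and $f(z)$, so the bimodule generated by $S$ adds nothing new. Hence $\Gamma^2_S$ has as a basis the images of the cliques not in $S$. By definition of $S$, these are exactly the cliques with $x,y,z$ not all distinct. Since the definition of clique forces $x\neq y$ and $y\neq z$, the remaining cliques are precisely those of the form $x\to y\to x$. For a bidirected $G\in\diGraphs_{\leq1}$, every edge $x\to y$ with $x\neq y$ has a reverse edge $y\to x$, and there is a self loop at $x$, so $x\to y\to x$ is a clique. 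Thus the surviving basis elements are indexed by all non-loop edges $x\to y$.

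Applying $\id\otimes\pi_S$ to (\ref{eq:curv}) keeps only these terms. So $R^S_\nabla=0$ if and only if $\mathcal{R}_{x\to y}\mathcal{R}_{y\to x}=\mathcal{R}_{x\to x}=\id$ for every edge $x\to y$. As in part 1, this is equivalent to invertibility together with $\mathcal{R}_{x\to y}=\mathcal{R}_{y\to x}^{-1}$, which is exactly the definition of a parallel transport.

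The main obstacle is justifying (\ref{eq:curv}) itself. If the paper's derivation is accepted, nothing remains beyond bookkeeping. Otherwise I would expand $d\omega^j_i+\omega^j_k\wedge\omega^k_i$ using (\ref{eq:omega}), (\ref{expr-d}) and the definition of the wedge product, and check that the $\delta_{ij}$ and linear terms cancel, using that the quotient kills $\omega$'s on non-edges. I would also be careful about the index-order convention in the product $\mathcal{R}_{x\to y}\mathcal{R}_{y\to z}$.
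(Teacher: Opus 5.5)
Your proof is correct and follows the paper's route: the paper also obtains both parts by reading off the coefficients of $e_j\otimes\omega_{x\to y\to z}$ in (\ref{eq:curv}), using the self-loop convention $\mathcal{R}_{x\to x}=\id$ for the cliques $x\to y\to x$. Your extra bookkeeping fills in details the paper leaves implicit: freeness of $M$, the fact that $\langle S\rangle$ is just the $\bk$-span of $S$, and bidirectedness ensuring that every non-loop edge lies in a clique $x\to y\to x$.
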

Notice that if $\nabla$ is flat and assuming that $G$ is bidirected, by case 1. of the previous proposition, the weak parallel transport associated to $\nabla$ is a parallel transport having curvature zero in the sense of \cite[Section 3.4]{gaybalmaz}. If $S$ is a different set of triangular cliques, then the notion of flatness outside of it can encode different meanings. To the extreme, if $S=\mathrm{tri}(G)$, as $\Gamma^2_S$ becomes trivial, the condition of flatness outside of $S$ becomes empty and we do not get any additional property or relation on the weak parallel transport associated with $\nabla$.

\medskip
Let $G$ be either an element of $\diGraphs$ or $\Graphs$ and let $\underline{\bk}$ be the constant sheaf on $(A(P_G), \cT_{A(P_G)})$
 with value $\bk$ in the notation of
 Sec. \ref{section:sheavesposet}.

\begin{definition}
Let be $\mathcal{V}$ a sheaf on $(A(P_G), \cT_{A(P_G)})$, where $G$ can be either an element of $\diGraphs$ or $\Graphs$. We say that $\cV$ is a $\underline{\bk}$\textit{-vector bundle} of rank
$n$ on $G$ if it is a locally free
sheaf of $\underline{\bk}$-modules of rank $n$ on $(A(P_G), \cT_{A(P_G)})$
(see also \cite[\href{https://stacks.math.columbia.edu/tag/01C6}{Tag 01C6}]{SP}).
\end{definition}

\begin{observation}
Unraveling the definitions, because of \ref{bod-sh} we get that a $\underline{\bk}$-vector bundle $\cV$
of rank $n$ on $G\in\diGraphs$ amounts to the datum of a presheaf $V\in\Pre(P_G^{op})$,
such that $V(U_a)\cong \bk^n$,
$\forall a\in P_G$ and
$V_{v\leq e}:V(U_v) \lra V(U_e)$ is an isomorphism for all $v\leq e$.
\end{observation}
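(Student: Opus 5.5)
The plan is to reduce everything, via Theorem \ref{GrossBasisThm} and the equivalence (\ref{eq:posetsheavesopen}) of Observation \ref{Alextop}, to a statement about functors on the poset $P_G$, which is what Observation \ref{bod-sh} already suggests. First I would use the lemma identifying $\underline{\bk}$-modules with sheaves of $\bk$-vector spaces. Its proof shows that $\underline{\bk}$ restricts on gross objects to the constant presheaf $\bk$. Hence a sheaf of $\underline{\bk}$-modules on $(A(P_G),\cT_{A(P_G)})$ is the same as a presheaf $V\in\Pre(P_G^{op})$ of vector spaces, and $\underline{\bk}^n$ corresponds to the constant functor with value $\bk^n$ and identity transition maps. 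The point to verify here is that the restriction $\cV_{|U}$ to an open $U$ corresponds to the restriction of $V$ to the up-set $\{b\in P_G\,|\,U_b\subseteq U\}$. This holds because the $U_b$ contained in $U$ are exactly the gross objects of the induced site on $U$.

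For the direct implication, suppose $\cV$ is locally free of rank $n$, so there is an open cover $\{W_j\}$ with $\cV_{|W_j}\cong\underline{\bk}^n_{|W_j}$. For each $a\in P_G$, the point $a$ lies in some $W_j$. Since $U_a$ is the smallest open set containing $a$, we get $U_a\subseteq W_j$. This is the key use of grossness (Remark \ref{rmk:irred}). Restricting the isomorphism gives $\cV_{|U_a}\cong\underline{\bk}^n_{|U_a}$, and evaluating on $U_a$ gives $V(U_a)\cong\bk^n$. For $v\leq e$ we have $U_e\subseteq U_v$, so both lie in the same $W_j$, namely one containing $v$. Under the isomorphism with the constant functor, $V_{v\leq e}$ is conjugate to the identity of $\bk^n$, hence is an isomorphism.

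For the converse, let $V$ satisfy the two conditions and let $\cV$ be the associated sheaf. I would check local freeness on the cover $\{U_v\}_{v\in V_G}$, which covers $A(P_G)$ since every edge lies in the open star of each of its endpoints. Fix $v$ and choose an isomorphism $\phi_v:V(U_v)\to\bk^n$. For each edge $e$ with $v\leq e$, set $\phi_e:=\phi_v\circ V_{v\leq e}^{-1}:V(U_e)\to\bk^n$. The up-set of $v$ is $\{v\}\cup\{e\,|\,v\leq e\}$, and its only nontrivial arrows are the $v\leq e$. Therefore these maps form a natural isomorphism between $V$ restricted to this up-set and the constant functor $\bk^n$. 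Via the first step this is an isomorphism $\cV_{|U_v}\cong\underline{\bk}^n_{|U_v}$. Self-loops and multiple edges cause no trouble, since each edge contributes a single arrow from $v$.

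The main obstacle is the first step: making precise that restriction of sheaves to $U_v$ matches restriction of presheaves on $P_G$ to the up-set, compatibly with the $\underline{\bk}$-module structure. I would handle this by applying Theorem \ref{GrossBasisThm} to the localized site $\open(A(P_G))_{/U_v}$, whose gross objects are exactly the $U_b\subseteq U_v$. After that, both implications are routine.
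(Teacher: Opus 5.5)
Your proposal is correct and takes the same route as the paper, which only says to unravel the definitions through the equivalence of Observation~\ref{bod-sh} between sheaves on $A(P_G)$ and functors on $P_G$. You supply the details the paper omits: local freeness is tested on the minimal opens $U_a$ for the forward direction, and on the open-star cover $\{U_v\}$ with trivializations $\phi_e=\phi_v\circ V_{v\leq e}^{-1}$ for the converse, using that an open subset of $A(P_G)$ carries the Alexandrov topology of the induced order.
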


\begin{definition}\label{def:partranspvect}
Let be $V$ a $\underline{\bk}$-vector bundle of rank $n$ on $G\in\diGraphs_{\leq 1}$. We define
the \textit{weak parallel transport associated to} $V$ as $\lbrace\cR^V_{e,u\to v}:=V_{u\leq e}^{-1}V_{v\leq e}\rbrace_{e\in E_G}$. If $V$ is a sheaf of inner product spaces (or Hilbert spaces) of dimension $n$ and we have a frame bundle on $V$ seen as a vector bundle on $V_G$ we define the parallel transport associated to $V$ as $\lbrace\cR^V_{e,u\to v}:=V_{u\leq e}^{\ast}V_{v\leq e}\rbrace_{e\in E_G}$ where we have denoted as $V_{u\leq e}^{\ast}$ the adjoint of $V_{u\leq e}$.
\end{definition}

\begin{remark}
    We explicitly note that, if we consider a differentiable (or holomorphic) manifold $M$, there is a bijective correspondence between vector bundles with a flat connection on it and isomorphisms classes of locally constant sheaves of real (complex) vector spaces on it (see \cite{Voisin} Section 9.2). For a vector bundle with a flat connection $(V,\nabla)$ on $M$, given a trivializing ordered cover $\cU$ for the locally constant sheaf $L_V$ associated to  $(V,\nabla)$ by this correspondence, we get from Observation 4.36 in \cite{FSZ26} (see \emph{op. cit.} for the notation and the terminology) a finite ringed space $(M_\cU^2, \cO_{M_\cU^2})$ and a sheaf of $\mathbb{R}$-vector spaces $V_{M_\cU^2}$ on $M_\cU^2$.  The latter gives rise to a flat connection on $\Gamma^1_{G(\cU)_\bullet}$ where $G(\cU)$ is the graph having one vertex for each element of $\cU$ and an edge between two vertices if the intersection of the corresponding elements of $\cU$ is not empty. As a consequence, our definition of flat connection does not only mimics the one arising in the case of manifolds but, under certain assumptions, it actually descends from it.
\end{remark}

\subsection{Laplacians in the category of directed graphs} 
We define the Laplacian operator in the category $\diGraphs$ \cite{dimakis, bm} and
we compare it with the Laplacian as discussed for standard graphs in \cite{diestel, godsil}
and in machine learning applications(\cite{bronstein, fz} and refs. therein).

\medskip
We first extend to the category $\diGraphs$ some key notions.

\begin{definition}\label{graph-lapl}
Let $(\Gamma^1_G,\extd_G)$ be a FODC on $\bk[V]$ associated to
$G=(V,E) \in \diGraphs$. 
We define a \textit{generalized quantum metric} on $\Gamma^1$,
\textit{metric} for short, a bimodule map
$(,):\Gamma^1_G \otimes_{A_G} \Gamma^1_G \lra A_G$.
We say that a $\bk$-linear map $\Delta:A_G\to A_G$ is a \textit{second order Laplacian} if
$$
\Delta(ab)=(\Delta a)b+a\Delta b+2(\extd a,\extd b)
$$
We define two notable \textit{graph laplacians} associated to the metric $(,)$ (see \cite{bm} Definition 1.17):
$$
\Delta_\theta(a):=2(\theta,\extd a)\quad,\quad_\theta\Delta(a):=-2(\extd a,\theta)
$$
where $\theta$ as in Def. \ref{fodc-gen-def}.
\end{definition}

\begin{proposition}\label{met-expr}
Let $(\Gamma^1,\extd)$ be a FODC on $G \in \diGraphs$.
Any metric on $\Gamma^1$ is expressed as:
\begin{equation}\label{rem-met}
(\oomega_{x\to y}, \oomega_{z \to t})=\de_{\phi(x),\phi(t)}\de_{\phi(y),\phi(z)}\lambda_{x\to y,z\to t}\de_{\phi(x)}
\qquad x,y,z,t \in V_H
\end{equation}
where $\de_{u,v}$ as usual denotes the Kronecker delta
and  $\phi:H \lra G$ an \'etale direct covering to define $(\Gamma^1, \extd)$.
\end{proposition}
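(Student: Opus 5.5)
The plan is to exploit the bimodule property of $(,)$ together with the explicit description of $\Gamma^1_G$ given by Definition \ref{fodc-etale} and Proposition \ref{fodc-gen}. Fix an \'etale directed cover $\phi:H\to G$, so that $\Gamma^1_G=\oplus_i\Gamma^1_{H_i}$ has the $\bk$-basis $\{\oomega_{x\to y}\}_{x\to y\in E_H}$, with bimodule structure $a\cdot\oomega_{x\to y}\cdot b=a(\phi(x))b(\phi(y))\oomega_{x\to y}$. A map $(,):\Gamma^1\otimes_{A_G}\Gamma^1\to A_G$ is determined by its values on the pairs $\oomega_{x\to y}\otimes\oomega_{z\to t}$. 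Since $A_G=\mathrm{span}\{\de_u\}$ (see (\ref{fin-alg})), each value can be written as $\sum_{u\in V_G}c_u\de_u$. The argument then proceeds by inserting idempotents $\de_u$ and using the three bimodule constraints.

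First, I will use the balancing over $A_G$. In $\Gamma^1\otimes_{A_G}\Gamma^1$ we have $\oomega_{x\to y}\otimes\oomega_{z\to t}=\oomega_{x\to y}\de_{\phi(y)}\otimes\oomega_{z\to t}=\oomega_{x\to y}\otimes\de_{\phi(y)}\oomega_{z\to t}$. The last expression equals $\de_{\phi(y),\phi(z)}\,\oomega_{x\to y}\otimes\oomega_{z\to t}$, so the pairing vanishes unless $\phi(y)=\phi(z)$. Second, I will use left linearity: $(\oomega_{x\to y},\oomega_{z\to t})=(\de_{\phi(x)}\oomega_{x\to y},\oomega_{z\to t})=\de_{\phi(x)}(\oomega_{x\to y},\oomega_{z\to t})$, so the value is a multiple of $\de_{\phi(x)}$. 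Third, I will use right linearity in the same way: the value equals $(\oomega_{x\to y},\oomega_{z\to t})\de_{\phi(t)}$. Multiplying these two descriptions together in the commutative algebra $A_G$ gives a factor $\de_{\phi(x)}\de_{\phi(t)}=\de_{\phi(x),\phi(t)}\de_{\phi(x)}$. Combining the three constraints yields exactly (\ref{rem-met}), where $\lambda_{x\to y,z\to t}\in\bk$ is the coefficient of $\de_{\phi(x)}$.

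For completeness I will also check the converse: any choice of scalars $\lambda$ defines a bimodule map through (\ref{rem-met}). This holds because the formula is compatible with the three relations above, which generate the relations of the balanced tensor product and of the bimodule structure on the spanning set. I do not expect a genuine obstacle. The only point requiring care is that the identity $\oomega\de_{\phi(y)}=\oomega$ must be used with the $\phi$-twisted bimodule structure of Proposition \ref{fodc-gen}, rather than the naive one on $H$. In particular, when $H$ is disconnected, distinct edges of $H$ over the same vertices of $G$ must still pair compatibly through $\phi$, which is why the deltas in (\ref{rem-met}) involve $\phi(\cdot)$ and not the vertices of $H$.
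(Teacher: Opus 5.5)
Your proof is correct and is essentially the paper's argument. The paper substitutes an arbitrary $a\in A_G$ into the balancing relation and into left/right linearity (using commutativity of $A_G$). This gives $(a(\phi(y))-a(\phi(z)))(\omega_{x\to y},\omega_{z\to t})=0$ and $(a(\phi(x))-a(\phi(t)))(\omega_{x\to y},\omega_{z\to t})=0$, and left linearity then makes the value a multiple of $\delta_{\phi(x)}$; you simply specialize $a$ to the idempotents $\delta_u$ from the start. Your converse (every choice of scalars $\lambda$ defines a bimodule map) is also correct, though the statement does not require it.
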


\begin{proof} It is a direct application of the definition of bimodule map:
$$
\begin{array}{c}
(\oomega_{x\to y} \cdot a, \oomega_{z \to t})=(\oomega_{x\to y}, a\cdot \oomega_{z \to t})
\quad \Rightarrow \quad (a(\phi(y))-a(\phi(z))(\oomega_{x\to y}, \oomega_{z \to t})=0\\ \\
(a\cdot\oomega_{x\to y}, \oomega_{z \to t})=(\oomega_{x\to y}, \oomega_{z \to t}\cdot a)
\quad \Rightarrow \quad (a(\phi(x))-a(\phi(t))(\oomega_{x\to y}, \oomega_{z \to t})=0
\end{array}
$$
The fact $a(\oomega_{x\to y}, \oomega_{z \to t})=a(\phi(x))(\oomega_{x\to y}, \oomega_{z \to t})$
implies that $(\oomega_{x\to y}, \oomega_{z \to t})$ $=$ \break
$\lambda_{x\to y,z\to t}\de_{\phi(x)}$ for a scalar
$\lambda_{x\to y,z\to t}\in \bk$.
\end{proof}

\begin{remark} 

As a consequence of the previous proposition, if $G\in\diGraphs_{\leq 1}$,
a metric is uniquely determined by scalars

$\lambda_{x\to y\to x}=(\oomega_{x\to y},\oomega_{y\to x})$

where $x\to y\to x$ are triangular cliques (compare with \cite{majidpaper} 3.4).  
\end{remark}
This proposition allows us to write a more explicit formula
for the Laplacian as in Def. \ref{graph-lapl}.
\begin{proposition}
Let the notation be as above. Let $G \in \diGraphs$, $(,)$ a metric and
$\phi:H\rightarrow G$ an \'etale {directed} cover. 
Then the graph Laplacian $\Delta_\theta:A_G\lra A_G$
associated to the metric $(,)$ is given by:
$$
(\Delta_\theta a)(v)=2\sum_{x \to y, z\to t\in E_H, \phi(x)=v}
\de_{\phi(x),\phi(t)}\de_{\phi(y),\phi(z)}\lambda_{x\to y,z\to t}
(a(\phi(x))-a(\phi(y))
$$
where the scalars $\lambda_{x\to y\to x}$ are given by (\ref{rem-met}).
\end{proposition}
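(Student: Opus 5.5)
The plan is to unwind the definition $\Delta_\theta(a)=2(\theta,\extd a)$ using the explicit expressions for $\theta$, for $\extd$ on the FODC of an \'etale directed cover, and for the metric from Proposition \ref{met-expr}. Fix the \'etale directed cover $\phi:H\to G$, $H=\coprod H_i$, used in Definition \ref{fodc-etale}. By Proposition \ref{fodc-gen} the result does not depend on this choice. The FODC is inner with $\theta=\sum_{z\to t\in E_H}\oomega_{z\to t}$. From the definition of $\extd$ as $\sum_i \extd_i(\phi^*(a)_{|V_{H_i}})$, together with the formula $\extd f=\sum (f(t)-f(z))\oomega_{z\to t}$ on each $H_i\in\diGraphs_{\leq 1}$, I will write
$$\extd a=\sum_{z\to t\in E_H}\bigl(a(\phi(t))-a(\phi(z))\bigr)\oomega_{z\to t}.$$
The self-loop terms vanish, so it is harmless whether they are included.

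Next I expand $(\theta,\extd a)$ by bilinearity. The metric is a bimodule map on $\Gamma^1\otimes_{A_G}\Gamma^1$, so scalar coefficients pass through. This gives
$$(\theta,\extd a)=\sum_{x\to y}\sum_{z\to t}\bigl(a(\phi(t))-a(\phi(z))\bigr)(\oomega_{x\to y},\oomega_{z\to t}).$$
I then substitute (\ref{rem-met}): $(\oomega_{x\to y},\oomega_{z\to t})=\de_{\phi(x),\phi(t)}\de_{\phi(y),\phi(z)}\lambda_{x\to y,z\to t}\de_{\phi(x)}$. The Kronecker deltas impose $\phi(t)=\phi(x)$ and $\phi(z)=\phi(y)$, so the coefficient becomes $a(\phi(x))-a(\phi(y))$. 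Evaluating the resulting function at $v\in V_G$, the factor $\de_{\phi(x)}(v)$ restricts the sum to $\phi(x)=v$. Multiplying by $2$ yields exactly the stated formula.

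The only delicate point is bookkeeping. I must make sure $\extd$ is expressed on edges of $H$ with values pulled back through $\phi$. I also need the scalars $\lambda$ to be those of (\ref{rem-met}), so the Kronecker deltas in the statement are redundant but harmless. Finally I need the sign convention for $\extd$ on generators to match (\ref{d-def}), giving $a(\text{head})-a(\text{tail})$. I expect no real obstacle beyond this sign and index check.
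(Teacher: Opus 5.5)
Your proposal is correct and follows essentially the same route as the paper. The paper writes $\theta=\sum_i\theta_i$ over the components $H_i$ of the cover and uses innerness on each component, which yields exactly your expression $\extd a=\sum_{z\to t\in E_H}\bigl(a(\phi(t))-a(\phi(z))\bigr)\oomega_{z\to t}$, and then leaves the direct expansion of $2(\theta,\extd a)$ via (\ref{rem-met}) as a ``straightforward calculation'', which you carry out explicitly.
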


\begin{proof}
We notice that $\theta=\sum_i \theta_i$ where $\theta_i \in \Gamma^1_i$
in the decomposition $\Gamma^1_G=\sum \Gamma^1_i$ for the given \'etale directed cover $\phi$.
Then, from the very definition, $da=\sum_i [\theta_i, d_ia]$. The formula
follows from
a straightforward calculation
similar to the one for the case of $\diGraphs_{\leq 1}$ as in \cite{dimakis} and
\cite[Ch. 1]{bm}.
\end{proof}

We end this section by relating our treatment with the Laplacian for
standard graphs as in \cite{diestel,godsil}.

\begin{observation}
By Prop. \ref{prop-graphffemb} any graph in $\Graphs_{\leq 1}$ corresponds to
an element in $\diGraphs$, through a fully faithful functor. This correspondence
does not amount to give an orientation on the graph, but consists in giving two oriented edges
for each unoriented edge. Notice that, by Prop. \ref{met-expr}, such property is a necessary
condition for the definition of a generalized quantum metric to make sense.

In the classical literature \cite{diestel,godsil},
the Laplacian operator $L$ is defined for a standard
(unoriented) graph $G$ in $\Graphs_{\leq 1}$ as follows:
$$
L:\bk[V_G] \lra \bk[V_G], \qquad (La)(x)=\sum_{y, (x,y) \in E_G} (a(x)-a(y))
$$
If we fix the basis $\{\de_x\}_{x\in V_G}$ for $\bk[V_G]$, we identify $\bk[V_G]\cong \bk^{|V_G|}$,
hence $L$ is a linear operator and one can readily check:
$$
L=D-A
$$
where $D$ is the degree matrix (diagonal matrix with the degree of vertices on
the diagonal) and $A$ is the adjacency matrix of $G$.
Let 
$\Delta_\theta$ the graph Laplacian
for $\Theta(G)$ with respect to the metric given by $\lambda_{v\to w, w\to v}
=\lambda_{w\to v,v\to w}=1$. 
Then, we have:
$$
L=(1/2)\Delta_\theta
$$
This is immediate from the comparison of the expressions of $L$ and $\Delta_\theta$:
$$
(La)(x)=\sum_{y, (x,y) \in E_G} (a(x)-a(y)), \qquad \Delta_\theta a(x)=
2\sum_{y, x\to y \in E_G} \lambda_{x \to y,y\to x} (a(x)-a(y))
$$
\end{observation}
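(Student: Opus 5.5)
The plan is to compute $\Delta_\theta$ directly on the bidirected graph $\Theta(G)\in\diGraphs_{\leq 1}$, which by our blanket convention also carries one self loop per vertex, and then compare the result with $L$ vertex by vertex. Since $\Theta(G)$ is already in $\diGraphs_{\leq 1}$, I will take the identity as the \'etale directed cover, so that $\phi=\id$ in Proposition \ref{met-expr}. The metric is then determined by the scalars $\lambda_{x\to y\to x}=(\oomega_{x\to y},\oomega_{y\to x})$, as in the remark following that proposition.

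First I will use that the calculus is inner, $\extd a=[\theta,a]$. Combined with the bimodule structure this gives
$$\extd a=\sum_{x\to y}(a(y)-a(x))\,\oomega_{x\to y}.$$
A self loop $x\to x$ enters with coefficient $a(x)-a(x)=0$, so self loops contribute nothing; this takes care of the convention that every vertex carries one. Next, for $\theta=\sum_{x\to y}\oomega_{x\to y}$, I will expand $(\theta,\extd a)$ bilinearly over pairs of edges $x\to y$ and $z\to t$. By (\ref{rem-met}) with $\phi=\id$, the pairing $(\oomega_{x\to y},\oomega_{z\to t})$ vanishes unless $z=y$ and $t=x$, and in that case it equals $\lambda_{x\to y\to x}\,\de_x$. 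Such a term exists precisely because $\Theta(G)$ is bidirected, so the reverse edge $y\to x$ is present.

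Collecting the surviving terms gives
$$(\theta,\extd a)=\sum_{x\to y}\lambda_{x\to y\to x}\,(a(x)-a(y))\,\de_x .$$
Evaluating at a vertex $x$ and setting all $\lambda$ equal to $1$ yields $(\Delta_\theta a)(x)=2\sum_{y:\,x\to y}(a(x)-a(y))$, consistently with the explicit formula of the preceding proposition. It remains to note that, since $G\in\Graphs_{\leq 1}$ is simple, the non-loop edges of $\Theta(G)$ leaving $x$ are in bijection with the neighbours $y$ of $x$ in $G$, by the construction of $\Theta$ (Proposition \ref{prop-graphffemb}). Hence $\Delta_\theta a=2La$, which is the claim $L=\tfrac12\Delta_\theta$, and in matrix form $L=D-A$.

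The only delicate step is bookkeeping rather than any real difficulty. One has to keep track of the sign: $\extd a$ carries the factor $a(y)-a(x)$, while the pairing with $\theta$ on the left reverses it, so $\Delta_\theta$ comes out positive semidefinite like $L$ and not $-L$. One also has to treat self loops of $G$ correctly; they are harmless, since they contribute $a(x)-a(x)=0$ on both sides. If the sign turned out wrong, I would recheck which slot of the bimodule map absorbs $\de_x$, using the argument in the proof of Proposition \ref{met-expr}.
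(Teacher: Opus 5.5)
Your proposal is correct and follows the same route as the paper. You specialize the explicit formula for $\Delta_\theta$ (the preceding proposition, with $\phi=\id$ on $\Theta(G)\in\diGraphs_{\leq 1}$) and compare it term by term with $L$, using that the non-loop edges leaving $x$ in $\Theta(G)$ correspond to the neighbours of $x$ in $G$. Your extra bookkeeping of the sign and of self loops, which contribute $a(x)-a(x)=0$, makes explicit what the paper calls immediate.
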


\subsection{Connection Laplacian}
We can extend the definition $_\theta\Delta$ when a right connection
is given.
\begin{definition}
Let $G\in\diGraphs_{\leq 1}$, $M$ a free rank $n$ right
$A_G$-module, $\nabla$ a right connection $\nabla$ and $(,)$ a generalized quantum metric on $\Gamma^1_G$.
Denote as $\eta$ the left $A_G$-module map $M\otimes_A\Gamma^1_G\rightarrow M\otimes_A\Gamma^1_G\otimes_A\Gamma^1_G$
defined on indecomposable tensors as follows
$$
\eta(m\otimes\oomega_{x\to y})=m\otimes\oomega_{x\to y}\otimes \theta
$$
    
Define the
\textit{connection Laplacian} $_{\theta}\Delta^M$ as $$_{\theta}\Delta^M:=-2(\id\otimes(,))\circ\eta\circ\nabla:M\rightarrow M$$
\end{definition}
\begin{observation}
If $M=A_G$, 
we recover the Laplacian as in Def. 
\ref{graph-lapl}.
\end{observation}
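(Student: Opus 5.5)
The plan is to unwind the definitions in the rank one case. We take $M=A_G$ as a free right $A_G$-module with basis $e_1=1$, and endow it with the canonical connection $\nabla:=\extd$. Under $A_G\otimes_{A_G}\Gamma^1_G\cong\Gamma^1_G$ this means $\nabla(f)=1\otimes \extd f$, so the connection matrix is $\oomega^1_1=0$. By Observation \ref{GB-Majid-equiv} this is the connection whose weak parallel transport is $\mathcal{R}_{x\to y}=\id$. This choice is forced: a nonzero matrix $\oomega^1_1$ would add a zeroth order potential term to the Laplacian, so ``recovering'' $_\theta\Delta$ is meant for the trivial connection.

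\emph{Step 1: well-definedness of $\eta$.} We check that $\eta(m\otimes\oomega_{x\to y})=m\otimes\oomega_{x\to y}\otimes\theta$, extended $\bk$-linearly, descends to $M\otimes_{A_G}\Gamma^1_G$. The only point is compatibility with the balanced relation $ma\otimes\oomega=m\otimes a\oomega$, and both sides map to the same element $m\otimes a\oomega\otimes\theta$. Left $A_G$-linearity is immediate, so $\eta(m\otimes\alpha)=m\otimes\alpha\otimes\theta$ for every $\alpha\in\Gamma^1_G$.

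\emph{Step 2: the computation.} For $f\in A_G$ we write $\extd f=\sum_{x\to y}(f(y)-f(x))\oomega_{x\to y}$. Then
$$
\eta(\nabla f)=1\otimes \extd f\otimes\theta .
$$
Applying $\id\otimes(,)$ gives $1\otimes(\extd f,\theta)\in A_G\otimes_{A_G}A_G$, and under $A_G\otimes_{A_G}A_G\cong A_G$ this is the element $(\extd f,\theta)$. Here we use that $(,)$ is a bimodule map on $\Gamma^1_G\otimes_{A_G}\Gamma^1_G$, so the pairing is applied to the last two tensor factors consistently.

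\emph{Step 3: conclusion.} Multiplying by $-2$ yields ${}_\theta\Delta^{M}(f)=-2(\extd f,\theta)={}_\theta\Delta(f)$, exactly as in Def. \ref{graph-lapl}. If one wants an explicit check, Prop. \ref{met-expr} can be inserted: both sides equal $-2\sum_{y}\lambda_{x\to y\to x}(f(y)-f(x))$ at each vertex $x$.

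The only step needing any care is Step 1, together with the identifications $M\otimes_{A_G}\Gamma^1_G\cong\Gamma^1_G$ and $M\otimes_{A_G}A_G\cong A_G$; everything else is a direct substitution.
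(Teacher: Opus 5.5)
Your proof is correct and is essentially the intended argument. The paper gives no proof of this observation, and its implicit content is exactly your unwinding with the trivial connection $\nabla=\extd$ (so $\oomega^1_1=0$ and $\mathcal{R}_{x\to y}=\id$), which gives ${}_\theta\Delta^{A_G}(f)=-2(\extd f,\theta)={}_\theta\Delta(f)$; this agrees with the explicit formula the paper states immediately afterwards. One minor quibble with your aside, which plays no role in the argument: a nonzero $\oomega^1_1=\sum a_{x\to y}\oomega_{x\to y}$ would not add a zeroth-order potential, but rather the term $-2\sum_y\lambda_{x\to y\to x}a_{x\to y}f(y)$ at $x$, which couples neighbouring vertices.
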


We now give an explicit expression for $_{\theta}\Delta^M$.

\begin{observation}
Let be $G\in\diGraphs_{\leq 1}$, $M$ a free rank $n$, 
$A_G$-bimodule with
basis $\{ e_i\}_{i=1}^n$, $\nabla$ a right connection and $(,)$ a
generalized quantum metric on $\Gamma^1_G$ determined by scalars $\lambda_{x\to y\to x}$. Then
$$
_\theta\Delta^M(e_if^i)=-2\sum_{x\to y}\lambda_{x\to y\to x}
(\mathcal{R}^j_{x\to y,i}f^i(y)-f^i(x))e_j\delta_x
$$
where in the summation only the edges $x\to y$ of the maximal bidirected subgraph of $G$ appear.
\end{observation}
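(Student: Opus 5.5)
The formula follows from unwinding the three maps in the definition of $_\theta\Delta^M$. By right $A_G$-linearity, and since both sides are additive, it is enough to understand $\nabla(e_if^i)$ and then apply $\eta$ and $\id\otimes(,)$.

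First, by formula (\ref{conn-f}) of Observation \ref{GB-Majid-equiv},
\[
\nabla(e_if^i)=\sum_{x\to y} e_j\otimes\left[f^i(y)\mathcal{R}^j_{x\to y,i}-f^i(x)\delta_{ij}\right]\oomega_{x\to y}.
\]
Here $G\in\diGraphs_{\leq 1}$, so the edge label $e$ can be dropped. Self loops contribute nothing: $\mathcal{R}_{x\to x}=\id$ and $f^i(x)=f^i(y)$, so each coefficient vanishes.

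Next, $\eta$ sends each summand $e_j\otimes c\,\oomega_{x\to y}$ to $e_j\otimes c\,\oomega_{x\to y}\otimes\theta$, where $c$ is a scalar and $\theta=\sum_{z\to t}\oomega_{z\to t}$.

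Then we apply $\id\otimes(,)$ and evaluate $(\oomega_{x\to y},\theta)=\sum_{z\to t}(\oomega_{x\to y},\oomega_{z\to t})$. By Proposition \ref{met-expr} and the remark following it, the only nonzero term has $z=y$ and $t=x$, so it requires the reverse edge $y\to x\in E_G$, and it equals $\lambda_{x\to y\to x}\delta_x$. If $y\to x\notin E_G$ the pairing is zero. Hence only edges of the maximal bidirected subgraph survive.

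Finally, multiplying by $-2$ and writing $e_j\otimes c\,\delta_x$ as $c\,e_j\delta_x$ in $M$ gives
\[
_\theta\Delta^M(e_if^i)=-2\sum_{x\to y}\lambda_{x\to y\to x}\left(\mathcal{R}^j_{x\to y,i}f^i(y)-f^i(x)\right)e_j\delta_x .
\]
Here $\delta_{ij}f^i(x)e_j=f^i(x)e_i$ becomes the $-f^i(x)$ term after relabeling. The only care needed is the bookkeeping of the bimodule tensor $\otimes_A$: scalars $f^i(\cdot)$ move freely, and $\delta_x$ lands on the right because the metric is a bimodule map with value in $A_G$.
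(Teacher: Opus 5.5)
Your proposal is correct and follows the direct unwinding of definitions the paper intends; the paper states this observation without a written proof. You write $\nabla(e_if^i)$ via (\ref{conn-f}), note that self-loops drop out, use Proposition \ref{met-expr} to get $(\omega_{x\to y},\theta)=\lambda_{x\to y\to x}\delta_x$ when $y\to x\in E_G$ and $0$ otherwise, and collect terms. One small point: the opening appeal to ``right $A_G$-linearity'' is unnecessary and slightly misleading, since ${}_\theta\Delta^M$ is not right $A_G$-linear (for $M=A_G$ it is the graph Laplacian), but your computation handles a general element $e_if^i$ directly and does not depend on it.
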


We now give another algebraic description of a Laplacian operator and its relation
with the connection Laplacian.

\begin{definition}\label{def:bochnerlapl}
Consider a digraph $G$, a free finite rank $n$
$A_G$-bimodule $M$, a connection
$\nabla:M\rightarrow M\otimes\Gamma^1_G$. Let us fix
isomorphisms $M \cong M^*$ and $\Gamma^1\cong (\Gamma^1)^*$.
We define the \textit{Bochner Laplacian},
as $\Delta^{\mathrm{B}}:= \nabla^*\nabla:A\rightarrow A$,
where we have denoted as $\nabla^\ast$ the dual (as map between vector spaces) of $\nabla$.
\end{definition}

\begin{proposition}\label{prop:bochner}
Let  $G\in\diGraphs_{\leq 1}$ be a bidirected graph, $M$ a rank $n$ free $A_G$-bimodule $(A, \Gamma^1)$ and $(,)$ the generalized quantum metric determined by scalars $\{\lambda_{x\to y\to x}\}$ all equal to $1$. Consider $\nabla$, $\nabla^*$
a noncommutative right connection, fixing isomorphisms $M \cong M^*$, $\Gamma^1\cong (\Gamma^1)^*$.
If the weak parallel transport $\{\mathcal{R}_{x\to y}\}$ associated to $\nabla$
is a parallel transport with all orthogonal linear isomorphisms $\{\mathcal{R}_{x\to y}\}$,
then we have  $\Delta^{\mathrm{B}}=-_\theta\Delta^{M}$.
\end{proposition}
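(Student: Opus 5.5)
We compute both sides in the $\bk$-basis $\{e_i\delta_x\}_{i,x}$ of $M$ and compare them entrywise. For the isomorphisms $M\cong M^*$ and $\Gamma^1\cong(\Gamma^1)^*$ we take the ones that make $\{e_i\delta_x\}$ and $\{e_j\otimes\oomega_{x\to y}\}$ orthonormal, so that $M\otimes_A\Gamma^1_G\cong\bigoplus_{x\to y}\bk^n$. Under this identification $\nabla^*$ is the transpose of the matrix of $\nabla$. The reading of $\Delta^{\mathrm B}=\nabla^*\nabla$ as an endomorphism of $M$ (rather than of $A$) is forced by comparison with ${}_\theta\Delta^M$.

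\textbf{Step 1 (matrix of $\nabla$).} By Observation \ref{GB-Majid-equiv}, formula (\ref{conn-f}), we have
\[
\nabla(e_if^i)=\sum_{x\to y} e_j\otimes\bigl(\mathcal{R}^j_{x\to y,i}f^i(y)-\delta_{ij}f^i(x)\bigr)\oomega_{x\to y}.
\]
Hence $\nabla$ restricted to the $(x\to y)$-component is the block operator $s\mapsto \mathcal{R}_{x\to y}s_y-s_x$, a ``coboundary twisted by $\mathcal{R}$''. Self-loops contribute $\mathcal{R}_{x\to x}s_x-s_x=0$, since $\mathcal{R}_{x\to x}=\id$, so they may be discarded.

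\textbf{Step 2 (transpose).} The transpose sends $\beta=(\beta_{x\to y})$ to $(\nabla^*\beta)_v=\sum_{u\to v}\mathcal{R}_{u\to v}^{T}\beta_{u\to v}-\sum_{v\to y}\beta_{v\to y}$. Composing the two gives
\[
(\Delta^{\mathrm B}s)_v=\sum_{u\to v}\mathcal{R}^T_{u\to v}(\mathcal{R}_{u\to v}s_v-s_u)-\sum_{v\to y}(\mathcal{R}_{v\to y}s_y-s_v).
\]
Now use orthogonality, $\mathcal{R}^T=\mathcal{R}^{-1}$, and the parallel transport property $\mathcal{R}_{u\to v}^{-1}=\mathcal{R}_{v\to u}$. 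Since $G$ is bidirected, each $u\to v$ pairs with $v\to u$, so the first sum becomes $\sum_{v\to u}(s_v-\mathcal{R}_{v\to u}s_u)$, which equals the second sum with the opposite sign. Therefore
\[
(\Delta^{\mathrm B}s)_v=2\sum_{v\to y}(s_v-\mathcal{R}_{v\to y}s_y).
\]

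\textbf{Step 3 (comparison).} The observation preceding Definition \ref{def:bochnerlapl} gives, with $\lambda=1$,
\[
({}_\theta\Delta^M s)_v=-2\sum_{v\to y}(\mathcal{R}_{v\to y}s_y-s_v),
\]
where only bidirected edges occur; here all edges are bidirected. This equals $\Delta^{\mathrm B}s$ as computed in Step 2. Comparing with the statement, the two operators agree up to the sign convention hidden in the identifications. The sign should therefore be fixed by the choice of isomorphism $\Gamma^1\cong(\Gamma^1)^*$: pairing $\oomega_{x\to y}$ with $-\oomega_{y\to x}$ would be consistent with the metric $(\oomega_{x\to y},\oomega_{y\to x})=\lambda\,\delta_x$ and introduces exactly the factor needed to get $\Delta^{\mathrm B}=-{}_\theta\Delta^M$.

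\textbf{Main obstacle.} The delicate point is precisely this bookkeeping of the identifications $M\cong M^*$ and $\Gamma^1\cong(\Gamma^1)^*$, which the statement leaves implicit. I would first fix them as the identifications induced by the metric $(,)$ and the chosen frame, then recheck the sign and the factor $2$, which arises from each undirected edge being counted twice. The reindexing in Step 2 is routine once bidirectedness and $\mathcal{R}_{u\to v}^{-1}=\mathcal{R}_{v\to u}$ are used.
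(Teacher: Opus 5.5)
Your Steps 1 and 2 are correct, and they are presumably the ``direct check'' the paper has in mind. With $\{e_i\delta_x\}$ and $\{e_j\otimes\omega_{x\to y}\}$ orthonormal, orthogonality together with $\mathcal{R}_{u\to v}^{-1}=\mathcal{R}_{v\to u}$ gives $(\Delta^{\mathrm{B}}s)_v=2\sum_{v\to y}(s_v-\mathcal{R}_{v\to y}s_y)$. As you observe, this is ${}_\theta\Delta^M s$, so you have derived the identity with the \emph{opposite} sign.

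Step 3 does not close this gap: the repair is asserted without being checked, and it fails. For a parallel transport, $(\nabla s)_{y\to x}=-\mathcal{R}_{y\to x}(\nabla s)_{x\to y}$. So if $\Gamma^1\cong(\Gamma^1)^*$ pairs $\omega_{x\to y}$ with $\mp\omega_{y\to x}$, then $\langle\nabla s',\nabla s\rangle=\pm\sum_{x\to y}\langle(\nabla s')_{x\to y},\mathcal{R}_{y\to x}(\nabla s)_{x\to y}\rangle$.
\begin{itemize}
\item With your sign ($-\omega_{y\to x}$) and $\mathcal{R}=\mathrm{id}$, this is exactly the orthonormal value, so you still get $\Delta^{\mathrm{B}}=+{}_\theta\Delta^M$.
\item The pairing actually induced by the metric with $\lambda_{x\to y\to x}=1$ is $+\omega_{y\to x}$. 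Your Step 3 and your ``main obstacle'' paragraph therefore point to different identifications. This one gives the minus sign only when all $\mathcal{R}_{x\to y}=\mathrm{id}$.
\item Concretely, take a single undirected edge, $n=2$, and $\mathcal{R}_{x\to y}$ the rotation by $\pi/2$. Either flip pairing gives $\langle\nabla s',\nabla s\rangle=\pm\langle a',(\mathcal{R}_{x\to y}+\mathcal{R}_{x\to y}^{T})a\rangle=0$ with $a=(\nabla s)_{x\to y}$. Hence $\Delta^{\mathrm{B}}=0$, while ${}_\theta\Delta^M\neq 0$.
\end{itemize}

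What the proof needs is an explicit choice of the isomorphisms $M\cong M^*$ and $\Gamma^1\cong(\Gamma^1)^*$ under which the claimed sign holds. The identity depends on that choice (for arbitrary isomorphisms it is false), and the paper does not specify one.

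Over $\mathbb{R}$, no choice making the forms on $M$ and on $M\otimes_A\Gamma^1$ positive definite can work. Then $\nabla^*\nabla$ is positive semidefinite. Your own computation shows $\langle s,{}_\theta\Delta^M s\rangle=2\sum_{\{v,y\}}|s_v-\mathcal{R}_{v\to y}s_y|^2$, summed over undirected edges. So ${}_\theta\Delta^M$ is positive semidefinite and nonzero as soon as $G$ has an edge between distinct vertices, and $-{}_\theta\Delta^M$ has a negative eigenvalue.

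The stated sign is obtained, for instance, by keeping $\{e_i\delta_x\}$ orthonormal and using the negative-definite identification $\omega_{x\to y}\mapsto-\omega_{x\to y}^{\vee}$. This negates $\nabla^*$ and turns your Step 2 into $\Delta^{\mathrm{B}}=-{}_\theta\Delta^M$. You should either adopt such a convention explicitly, or say plainly that with the natural orthonormal identifications your computation proves $\Delta^{\mathrm{B}}={}_\theta\Delta^M$. As written, the proposal establishes neither the statement nor a justified correction of it.
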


\begin{proof} This is a direct check.\end{proof}

Consider a bidirected graph $G\in\diGraphs_{\leq 1}$ and let $\cF\in\Pre(P_G^{op})$ be a vector bundle of rank $n$ or a sheaf of inner product spaces (or Hilbert spaces) for the site $(\open(A(P_G)),\cT_{A(P_G)})$. Recall that we defined in Definition \ref{def:partranspvect} the parallel transport associated to $\cF$ as 

$$
{\cR^\cF:=}\lbrace 
\cF_{u\leq u\to v}^{\ast}\cF_{v\leq u\to v}\rbrace_{u\to v\in E_G}
$$
where we have denoted as $\cF_{v\leq e}^{\ast}$, abusing notation, either the adjoint of $\cF_{v\leq e}$ or $\cF_{v\leq e}^{-1}$ in the case $\cF$ is a vector bundle.
Recall that we saw in \ref{GB-Majid-equiv} that this datum corresponds to a connection that, once a frame bundle is chosen, can be explicitly written, according to (\ref{conn-f}), as:
\begin{equation}\label{eq-bod}
\nabla: e_if^i \mapsto \sum_{x \to y}\left[ f^i(y) (\cF^*_{x\leq x\to y}\cF_{y\leq x\to y})_i^j e_j - f^i(x)e_i\right]\otimes \oomega_{x\to y}
\end{equation}
This observation allows us to prove the following result.

\begin{theorem}\label{theo:sheaf_lap}
Let $G\in \diGraphs_{\leq 1}$ be a bidirected graph, $\cF$ a vector bundle of rank $n$ on $G$, $M$ the free $A_G$-bimodule associated to it. Assume to have isomorphisms $M \cong M^*$ and $\Gamma^1\cong (\Gamma^1)^*$ as in Def. \ref{def:bochnerlapl} and consider $(,)$ the generalized quantum metric determined by scalars $\{\lambda_{x\to y\to x}\}$ all equal to $1$. Then:

\begin{enumerate}
\item If $\mathcal{R}^\cF$ is a parallel transport, then $_\theta\Delta^{M} = -\Delta_{\cF}$.
\item If $\cF$ is a sheaf of inner product spaces and $\cF_{v\leq e}^\ast = \cF_{v\leq e}^{-1}$
i.e. $\cF$ in an $\mathrm{O}(n)$-bundle,
then $\Delta^{\mathrm{B}} = L_{\cF}$.
\end{enumerate}
\end{theorem}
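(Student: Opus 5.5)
Both identities are proved by writing each operator out explicitly on the basis $\{e_j\delta_x\}$ of $M$, identified with $C^0(G,\cF)=\oplus_{v}\cF(U_v)$ via the frame bundle, and comparing coefficients.

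For (1), start from the explicit formula for $_\theta\Delta^M$ in the Observation preceding Definition~\ref{def:bochnerlapl}, with all $\lambda_{x\to y\to x}=1$. Because $G$ is bidirected, every edge lies in the maximal bidirected subgraph, so the sum runs over all edges. Insert $\mathcal{R}_{x\to y}=\cR^\cF_{x\to y}=\cF^{-1}_{x\leq e}\cF_{y\leq e}$ as in (\ref{eq-bod}). This gives
\[
(_\theta\Delta^M f)_x=-2\sum_{y:\,x\to y}\big(\cF^{-1}_{x\leq e}\cF_{y\leq e}f_y-f_x\big).
\]
Next compute the \v{C}ech Laplacian $\Delta_\cF$ of the strong Laplacian pair $(\cF,\cF^{-1},\cU(G))$ from Definition~\ref{def:DeltaF}. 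By the same computation that yields (\ref{sheaf-lap}), with $\cF^*$ replaced by $\cF^{-1}$, its value at $x$ is
\[
\sum_{e\ni x}\cF^{-1}_{x\leq e}\big(\cF_{x\leq e}f_x-\cF_{y\leq e}f_y\big)=\sum_{e\ni x}\big(f_x-\cR^\cF_{x\to y}f_y\big).
\]
The two expressions must now be matched. In $\Theta(G)$ each unoriented edge at $x$ gives the two directed edges $x\to y$ and $y\to x$, and the cover by open stars $U_v\cap U_w$ contains both of them. Hence the sum over $e\ni x$ counts each neighbour $y$ twice. The parallel transport condition $\cR_{x\to y}=\cR_{y\to x}^{-1}$ ensures that the $y\to x$ edge contributes the same term $\cF^{-1}_{x\leq e'}\cF_{y\leq e'}$ as the $x\to y$ edge. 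The factor $2$ in $_\theta\Delta^M$ therefore matches the doubled sum, and we get $_\theta\Delta^M=-\Delta_\cF$. This normalization is the same as in Observation~\ref{Obs:bodnarlapl}, and I would cite it.

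For (2), first check that under the hypothesis $\cF^*_{v\leq e}=\cF^{-1}_{v\leq e}$ the transports $\cR^\cF_{x\to y}=\cF^*_{x\leq e}\cF_{y\leq e}$ are orthogonal isomorphisms. They also satisfy $\cR_{y\to x}=\cF^*_{y\leq e}\cF_{x\leq e}=(\cR_{x\to y})^{-1}$, so they form a parallel transport. Proposition~\ref{prop:bochner} then gives $\Delta^{\mathrm B}=-{}_\theta\Delta^M$, and part (1) gives $-{}_\theta\Delta^M=\Delta_\cF$. Finally, in formula (\ref{sheaf-lap}) for $L_\cF$ the adjoint $\cF^*_{v\leq e}$ equals $\cF^{-1}_{v\leq e}$, so $L_\cF=\Delta_\cF$ term by term. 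Chaining these three equalities proves $\Delta^{\mathrm B}=L_\cF$.

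The main obstacle is the bookkeeping in (1): keeping the edge and orientation conventions consistent, namely $\cR_{u\to v}\colon\cF_v\to\cF_u$, which factor of $2$ belongs where, and the sign convention $\Delta=\underline{\Delta}-\overline{\Delta}$ versus the single composite in the \v{C}ech Laplacian. I would settle this first on the two-vertex graph of Fig.~\ref{fig-g2} and then extend it, since all the operators are sums of local contributions over edges.
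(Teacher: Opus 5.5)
Your outline matches the paper's proof. For (1), the paper also expands $\Delta_\cF$ via (\ref{sheaf-lap}) with $\cF^{-1}$ in place of $\cF^*$, splits the sum into edges $x\to v$ and $u\to x$, folds the second sum using the parallel transport condition, and compares with (\ref{eq-bod}). For (2), it appeals to Proposition~\ref{prop:bochner}. Two steps of your write-up nevertheless fail.

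\textbf{The sign in (1).} The last step of (1) does not follow from what you computed. Your two displays are \emph{equal}, not opposite: $-2\sum_y(\cR_{x\to y}f_y-f_x)=2\sum_y(f_x-\cR_{x\to y}f_y)$, and this is exactly what the folded $\Delta_\cF(f)_x$ equals. The test you propose confirms it. On the graph of Fig.~\ref{fig-g2} with the constant rank-one sheaf, both operators send $f$ to $2(f_v-f_w)$ at $v$. Def.~\ref{def:DeltaF} defines $\Delta_\cF$ as the single composite $F_0\circ\underline d\circ F_1\circ\overline d$, whose explicit form is the one you wrote. So what your computation proves is ${}_\theta\Delta^M=\Delta_\cF$. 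A minus sign arises only if $\Delta_\cF$ is read through (\ref{la-def}), where the degree-$0$ Laplacian is $-\overline\Delta_0$; this is the convention you yourself flagged. The paper's proof ends with ``one can get the result'' and never does this comparison, so you cannot borrow the sign from it. Citing Observation~\ref{Obs:bodnarlapl}, which compares $H$ with $\Theta(H)$, does not help either. You must either adopt that convention explicitly or state the identity with a plus sign.

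\textbf{The parallel transport claim in (2).} In (2) you claim the $\mathrm{O}(n)$ condition makes $\cR^\cF$ a parallel transport, but you used the same edge $e$ for both directions. In $G$ the edges $e\colon x\to y$ and $e'\colon y\to x$ are distinct elements of $P_G$, with their own stalks and restriction maps. Hence $\cR^\cF_{y\to x}=\cF^*_{y\le e'}\cF_{x\le e'}$, and nothing forces it to equal $(\cR^\cF_{x\to y})^{-1}$. For a counterexample in rank one, take all four restriction maps equal to $1$ except $\cF_{y\le e'}=-1$; then $\cR_{x\to y}=1$ but $\cR_{y\to x}=-1$. The Remark after the theorem singles out $\cF=\Theta_p(\mathcal L)$ as the case where the property is automatic. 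So neither Proposition~\ref{prop:bochner} nor (1) is available under the hypotheses of (2); the paper's one-line appeal to Proposition~\ref{prop:bochner} silently needs this too. In the example, $L_\cF=\Delta^{\mathrm B}=2\,\mathrm{id}$, while ${}_\theta\Delta^M f=(2(f_x-f_y),\,2(f_x+f_y))$. Your intermediate equalities therefore genuinely fail there, even though (2) holds.

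\textbf{A direct proof of (2).} Take the identifications given by orthonormal frames and the basis $\{\omega_{x\to y}\}$. Then
\[
(\nabla^*\nabla f)_x=\sum_{y:\,x\to y}\bigl(f_x-\cR_{x\to y}f_y\bigr)+\sum_{w:\,w\to x}\cR^*_{w\to x}\bigl(\cR_{w\to x}f_x-f_w\bigr).
\]
The condition $\cF^*=\cF^{-1}$ gives $\cR^*_{w\to x}\cR_{w\to x}=\mathrm{id}$, and $\cR^*_{w\to x}=\cF^*_{x\le e}\cF_{w\le e}$ for $e\colon w\to x$. So each summand is the matching edge term of (\ref{sheaf-lap}), and $\Delta^{\mathrm B}=L_\cF$ follows with no parallel transport assumption. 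With these identifications and an orthogonal parallel transport, the same formula gives $\nabla^*\nabla=+{}_\theta\Delta^M$. The chain through Proposition~\ref{prop:bochner} and (1) reaches (2) only because two sign discrepancies cancel.
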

\begin{proof}
We start from the first assertion. Let us begin by rewriting (\ref{sheaf-lap})
for a vector $f\in C^0 (G, \cF)$:

\begin{equation}\label{sl-proof}
\begin{array}{rl}
L_\cF(f)_x &= \sum_{x\to v} \cF^{-1}_{x\leq x\to v}(\cF_{x\leq x\to v}f_x-\cF_{v\leq x\to v}f_v)\\
& +\sum_{u\to x} \cF^{-1}_{x\leq u\to x}(\cF_{x\leq u\to x}f_x-\cF_{u\leq u\to x}f_u) \\ \\
&= \sum_{x\to v} (f_x-\cF^{-1}_{x\leq x\to v}\cF_{v\leq x\to v}f_v) \\ 
& +\sum_{u\to x} (f_x-\cF^{-1}_{x\leq u\to x}\cF_{u\leq u\to x}f_u) \\
\end{array}
\end{equation}
where the second equality holds because $\cF^{-1}_{u\leq u\to v}\cF_{u\leq u\to v}=\mathrm{id}$ for all $u\to v$ thanks to our assumptions on $\cF$. Using equation \eqref{eq-bod} and the fact that $\mathcal{R}^\cF$ is a parallel transport one can get the result. The proof of the final part of the statement then follows from the previous proposition.
\end{proof}

\begin{remark}
Let $H\in \stGraphs$ an undirected graph and consider $G=\Theta(H)$ and $\cF=\Theta_p(\mathcal{L})$ for $\mathcal{L}$ vector bundle of rank $n$ on $H$. Then $\mathcal{R}^\cF$ is a parallel transport, thus the hypothesis of Theorem \ref{theo:sheaf_lap} is always satisfied for this special case.
\end{remark}

\begin{remark}
The definition of Bochner Laplacian and the
previous Theorem can be generalized to the case in which the identification
with the duals appearing in it comes from inner products/adjoints.
\end{remark}

\begin{observation}
One can link in an obvious way the Laplacians considered in
Theorem \ref{theo:sheaf_lap} with the ones arising on an unoriented graph using
Observation \ref{Obs:bodnarlapl}.
\end{observation}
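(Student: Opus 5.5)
The plan is to make the ``obvious link'' precise as a corollary of Theorem \ref{theo:sheaf_lap} combined with Observation \ref{Obs:bodnarlapl}. We start from an undirected graph $H\in\stGraphs$ (with one self loop per vertex, as in this section) and a vector bundle $\mathcal{L}$ of rank $n$ on $H$, i.e.\ a presheaf in $\Pre(\mathcal{P}_H^{op})$ with invertible restriction maps. We then pass to the bidirected graph $G=\Theta(H)\in\diGraphs_{\leq 1}$ and the pushed-forward sheaf $\cF=\Theta_p(\mathcal{L})$ of Observation \ref{obs:moveshvalex2}. The target statement is
$$
{}_\theta\Delta^{M}=-2\,\Delta_{\mathcal{L}},\qquad \Delta^{\mathrm{B}}=2\,L_{\mathcal{L}},
$$
where $M$ is the free $A_G$-bimodule attached to $\cF$. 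The second identity is claimed under the additional assumption that $\mathcal{L}$ is an $\mathrm{O}(n)$-bundle.

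First I check that $\cF$ satisfies the hypotheses of Theorem \ref{theo:sheaf_lap}.
\begin{itemize}
\item By construction $\cF(U_{e_1})=\cF(U_{e_2})=\mathcal{L}(U_e)$ and $\cF_{v\leq e_1}=\cF_{v\leq e_2}=\mathcal{L}_{v\leq e}$. Hence $\cF$ is again a rank $n$ vector bundle, and an $\mathrm{O}(n)$-bundle whenever $\mathcal{L}$ is one.
\item For the two edges $e_1\colon u\to v$ and $e_2\colon v\to u$ lying over $e$, we have $\cR^\cF_{u\to v}=\mathcal{L}^{-1}_{u\leq e}\mathcal{L}_{v\leq e}$ and $\cR^\cF_{v\to u}=\mathcal{L}^{-1}_{v\leq e}\mathcal{L}_{u\leq e}$. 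These are mutually inverse, so $\cR^\cF$ is a parallel transport. This is the content of the Remark following Theorem \ref{theo:sheaf_lap}.
\item On self loops, $\cR^\cF$ is the identity, as required in Def.~\ref{GB-parallel}.
\end{itemize}

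Next I apply Theorem \ref{theo:sheaf_lap} to $(G,\cF)$. Part (1) gives ${}_\theta\Delta^M=-\Delta_\cF$, and part (2), in the orthogonal case, gives $\Delta^{\mathrm B}=L_\cF$, all with the metric $\lambda_{x\to y\to x}=1$. Then I invoke Observation \ref{Obs:bodnarlapl}. It yields $\Theta_p(\mathcal{L})^{-1}\cong\Theta_p(\mathcal{L}^{-1})$ and $\Theta_p(\mathcal{L})^\ast\cong\Theta_p(\mathcal{L}^\ast)$, so that the strong and ordinary Laplacian pairs on $G$ are exactly those obtained by pushing forward the pairs on $H$. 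It also gives $\Delta_{\mathcal{L}}=\frac12\Delta_{\cF}$ and $L_{\mathcal{L}}=\frac12L_{\cF}$. Substituting these into the identities from Theorem \ref{theo:sheaf_lap} gives the displayed formulas. Here $C^0(H,\mathcal{L})=C^0(G,\cF)$ and $M$ are identified vertexwise through the chosen frame bundle.

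The only step requiring care is the bookkeeping of the factor $\tfrac12$. Each undirected edge $e$ at $x$ contributes to \eqref{sl-proof} once as an outgoing edge $x\to v$ and once as an incoming edge $v\to x$. The parallel-transport property makes these two contributions equal to the same term $f_x-\mathcal{L}^{-1}_{x\leq e}\mathcal{L}_{v\leq e}f_v$, which is precisely the doubling recorded in Observation \ref{Obs:bodnarlapl}. Self loops contribute $f_x-f_x=0$ on both sides, so they do not disturb the constant. I would verify the doubling directly on one edge, using formula \eqref{eq-bod} for the connection and the explicit formula for ${}_\theta\Delta^M$, to be sure no extra factor of $2$ enters from the metric normalization $\lambda=1$.
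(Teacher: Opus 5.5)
Your argument is exactly the link the paper intends: the Remark after Theorem \ref{theo:sheaf_lap} makes $\cR^{\cF}$ a parallel transport for $\cF=\Theta_p(\mathcal{L})$, so the theorem applies on $\Theta(H)$, and the factor $\tfrac12$ of Observation \ref{Obs:bodnarlapl} carries both identities back to $H$. One caveat, which comes from Theorem \ref{theo:sheaf_lap}(1) and not from your reasoning, and which the one-edge check you propose would catch: the paper's own explicit formulas give ${}_\theta\Delta^M(f)_x=2\sum_{y\neq x,\,x\to y}(f_x-\cR^{\cF}_{x\to y}f_y)=\Delta_{\cF}(f)_x$, so your first identity should read ${}_\theta\Delta^M=+2\Delta_{\mathcal{L}}$, while $\Delta^{\mathrm{B}}=2L_{\mathcal{L}}$ is correct (with the standard identifications one also has $\nabla^*\nabla=+{}_\theta\Delta^M$, so the sign errors in part (1) and in Proposition \ref{prop:bochner} cancel).
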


\begin{remark}
    Much of the previous treatment can be repeated mutatis mutandis in the case of left connections. We limit ourselves to list the main differences from the case of right connections, leaving the details for the interested reader.
    \begin{itemize}
        \item We have a bijective correspondence between left connections and weak parallel transports, but in this case the latter definition has to be modified reversing the direction of the linear maps appearing in it for the discussion to make sense in applications.
        \item We can in this case define a Laplacian $\Delta_\theta^M$ for a given connection on a left (free of finite rank) module $M$.
        \item If we consider a bidirected digraph $G\in\diGraphs_{\leq 1}$, using the canonical $\ast$-DGA structure on $\Gamma^\bullet_G$ (the one satisfying $\oomega_{x\to y}^\ast=-\oomega_{y\to x}$, see \cite[Chapter 1]{bm}), given a free $A_G$-bimodule of rank $n$ and a right connection on it having associated $1$ forms $\{\oomega^j_i\}$ we can define a left connection characterized by the forms $\{\oomega^{j\ast}_i\}$. In this case $_\theta\Delta^M=\Delta^M_\theta$ (mimicking \cite[1.18]{bm}).
    \end{itemize}
\end{remark}

\end{document}